\newtheorem{thm}{Theorem}
\newtheorem{lem}{Lemma}[section]
\newtheorem{prop}[lem]{Proposition}
\theoremstyle{definition}
\numberwithin{equation}{section}
  \def\th@definition{
  \thm@headfont{\itshape} 
}
\theoremstyle{definition}
\newtheorem{rem}{Remark}[section]
\newcommand{\cb}{\color{black}}
\newcommand{\cred}{\color{black}}
\newcommand{\eps}{\varepsilon}
\newcommand{\be}{\begin{equation} \label}
\newcommand{\ee}{\end{equation}}
\newcommand{\bas}{\begin{eqnarray*}}
\newcommand{\eas}{\end{eqnarray*}}
\newcommand{\R}{\mathbb{R}}
\newcommand{\N}{\mathbb{N}}
\newcommand{\Rn}{\mathbb{R}^n}
\newcommand{\ds}{\displaystyle}
\newcommand{\ts}{\textstyle}
\begin{document}

\title[Universal estimates and Liouville theorems]{Universal estimates and Liouville theorems for \\ superlinear 
problems without scale invariance}

\author{Philippe Souplet}
\address{Universit\'e Sorbonne Paris Nord, CNRS UMR 7539,
Laboratoire Analyse G\'eom\'etrie et Applications,
93430 Villetaneuse, France.}
\email{souplet@math.univ-paris13.fr}

\begin{abstract}
We revisit rescaling methods for nonlinear elliptic and parabolic problems
and show that, by suitable modifications, they may be used for nonlinearities that
are not scale invariant even asymptotically and whose behavior
can be quite far from power like. 

In this enlarged framework, by adapting the doubling-rescaling method from \cite{PQS1, PQS2},
we show that the equivalence found there between universal estimates 
and Liouville theorems remains valid.
In the parabolic case we also prove a Liouville type theorem for a rather large class of 
non scale invariant nonlinearities.
This leads to a number of new results 
for non scale invariant elliptic and parabolic problems,
concerning space or space-time singularity estimates,
initial and final blow-up rates, universal and a priori bounds for global solutions,
and decay rates in space and/or time.

We illustrate our approach by a number of examples, 
which in turn give indication about the optimality of the estimates and of the assumptions.
\end{abstract}

\maketitle

\vskip -6mm
\centerline{\cb\small\it Dedicated to Professor Juan Luis V\'azquez, on the occasion of his 75th Birthday}
\vskip 10mm

\setcounter{tocdepth}{1}

\vspace*{-1cm}
\tableofcontents

\def\eps{\varepsilon}
\def\Rn{{\R}^n}
\def\Rm{{\R}^m}
\def\eps{\varepsilon}

\def\IR {\int_{B_R}}
\def\ISR {\int_{S_R}}
\def\IS {\int_{S^{n-1}}}
\def\Aand{\quad\hbox{ and }\quad}
\def\AAand{\qquad\hbox{ and }\qquad}

\section{Introduction}

We are concerned with Liouville type theorems for elliptic and parabolic problems
and their applications.
Throughout this paper, by a {\it solution} we will always mean a nonnegative solution.
Also solutions will be assumed to be classical unless otherwise specified. 
We shall denote the Sobolev exponent by 
$$p_S=
\begin{cases}
\frac{n+2}{n-2},&\hbox{ if $n\ge 3$} \\
\noalign{\vskip 1mm}
\infty,&\hbox{ if $n\le 2$.}
\end{cases}
$$
Let us first recall the celebrated Gidas-Spruck elliptic Liouville theorem \cite{GSa}
(see also \cite{BVV} for a simplified proof).

\medskip
\noindent {\bf Theorem A.}
{\it Let $1<p<p_S$. Then the equation 
\be{ellup}
-\Delta u=u^p
\ee
has no nontrivial solution in $\R^n$.}
\medskip

\noindent The exponent $p_S$ is critical for nonexistence since, as is well known,
there exist positive solutions
of \eqref{ellup}
in $\R^n$ whenever $n\ge 3$ and $p\ge p_S$ 
(see \cite[Section~9]{QSb} and the references therein; the solution can even be taken bounded and radial).
The question whether the parabolic analogue of Theorem~A is still true remained open for a long time
and was solved in the full subcritical range only recently in \cite{Q21} 
(cf.~also \cite{Q21b} and see \cite{BV98, MS, PQ, PQS2, QSb, Q16}
for previous partial results). Namely:

\medskip
\noindent {\bf Theorem B.}
{\it Let $1<p<p_S$. Then the equation 
\be{parabup}
u_t-\Delta u=u^p
\ee
has no nontrivial solution in $\R^n\times \R$.}
\medskip

Elliptic and parabolic Liouville theorems, in combination with rescaling techniques, have many applications. 
Theorem A (and its half-space analogue) was used in \cite{GSb} to show
a priori bounds and existence for elliptic Dirichlet problems.
In \cite{Gi}, Theorem A was used in combination with energy arguments to prove 
a priori bounds for global solutions of the initial boundary value problem associated with \eqref{parabup},
and next in~\cite{GK}, using also similarity variables, to prove blow-up rates for nonglobal solutions.
Then,  in \cite{PQS1, PQS2}, by combining the rescaling method with a doubling argument,
it was shown that the Liouville property in Theorem~A (resp.~Theorem~B) leads to various
universal estimates for elliptic (resp. parabolic) problems, namely:
space or space-time singularity estimates, initial and final blow-up rates,
 universal and a priori bounds for global solutions,
 decay rates in space and/or time.
 For instance, in the parabolic case, the following was proved in \cite{PQS2}:

\medskip
\noindent {\bf Theorem C.}
{\it Let $p\in (1,p_S)$ be such that\footnote{This was later proved in \cite{Q21} to be true for all $p\in (1,p_S)$.} 
\eqref{parabup}
has no {\cb nontrivial} bounded solution in $\R^n\times \R$.
Let $f:[0,\infty)\to\R$ be a 
continuous function such that
\be{hyplimup}
\lim_{s\to\infty} s^{-p}f(s)=\ell\in (0,\infty).
\end{equation}

(i) For any (possibly unbounded) domain $\Omega\subset\R^n$, $T>0$ and any solution $u$ of
\be{eqf2}
u_t-\Delta u=f(u) \quad\hbox{in $\Omega\times(0,T)$},
 \ee 
we have the estimate\footnote{For the cases $T=\infty$ and $\Omega=\Rn$, 
the last two terms in \eqref{conclthmheatA1} are defined to be $0$.}
\be{conclthmheatA1}
u(x,t)\leq C\bigl(\sigma+t^{-\frac{1}{p-1}}+(T-t)^{-\frac{1}{p-1}}
+{\rm dist}^{-\frac{2}{p-1}}(x,\partial\Omega)\bigr),
\quad x\in\Omega, \quad 0<t<T,
   \end{equation}
where $C=C(n,f)>0$, $\sigma=0$ if $f(s)\equiv \ell s^p$ and $\sigma=1$ otherwise.

\smallskip
(ii) Let $\Omega$ be a uniformly $C^2$ smooth (possibly unbounded) domain of $\R^n$.
For any solution~$u$ of
$$
\begin{cases}
u_t-\Delta u=f(u) \quad&\hbox{in $\Omega\times(0,T)$}, \\
\noalign{\vskip 1mm}
u=0 \quad&\hbox{on $\partial\Omega\times(0,T)$}, 
\end{cases}
$$
we have the estimate
\begin{equation}
u(x,t)\leq C\bigl(1+t^{-\frac{1}{p-1}}+(T-t)^{-\frac{1}{p-1}}\bigr),
\quad x\in\Omega, \quad 0<t<T,
   \label{conclthmheatA2}
   \end{equation}
      where $C=C(n,f,\Omega)>0$.
}

\medskip

Estimates \eqref{conclthmheatA1}, \eqref{conclthmheatA2} are called universal,
 in the sense that the constants $C$ do not depend on the solution.
See \cite{Hu, BV98, CF, FS, MS, QSb} for previous results related with Theorem~C.
Liouville type theorems and their applications have been extended in various directions,
for instance systems or quasilinear problems (see,~e.g.,~\cite{RZ, SZ, S09, QSb, NNPY, Q21} and the references therein).
Concerning Liouville type theorems for semilinear equations with more general nonlinearities, 
let us mention the following important 
generalization of Theorem~A, due to~\cite{LZ}.

\medskip
\noindent {\bf Theorem D.}
{\it Assume that $f\in C([0,\infty))$, with $f(s)>0$ for all $s>0$, and
\be{condLZ}
\hbox{$s^{-p_S}f(s)$ is nonincreasing and nonconstant.}
\ee
 Then the equation
 \be{S-LiouvLZ}
-\Delta u=f(u),\qquad x\in \Rn
\ee 
has no nontrivial solution.}
\medskip

It is natural to conjecture that, assuming an additional superlinearity condition on $f$ at~$\infty$, 
the parabolic analogue of Theorem~D should be true.
However this is still unknown. 
On the other hand it seems that, with the exception of \cite{GIR}\footnote{which provided part of the motivation 
of the present work; see the end of Remark~\ref{RemKnownEll}},
applications of Liouville type theorems to a priori or universal estimates
of nonlinear elliptic and parabolic equations
have up to now required the nonlinearity to satisfy the asymptotic power-like behavior \eqref{hyplimup}.
Also \eqref{conclthmheatA1} with $\sigma=0$ provides temporal decay rates for global solutions of the Cauchy problem 
(taking $T=\infty$ and $\Omega=\Rn$), as well as space-time decay if $\Omega$ is an exterior domain,
but this is available only in the pure power case $f(s)\equiv c s^p$.
For instance, it seems unknown whether some universal estimates are true for 
solutions of the problems
\be{logNL}
-\Delta u=u^p \log^q(2+u)
\quad\hbox{ or }\quad u_t-\Delta u=u^p \log^q(2+u)\qquad (q\in\R\setminus\{0\}),
\ee
or problems with more general factors such as iterated logarithms, etc.

 \smallskip
 
 The first goal of this paper is  to show that, by suitable modifications
of the rescaling and of the doubling-rescaling methods, the above applications to a priori 
estimates and universal singularity or decay estimates
can be developped for elliptic and parabolic problems with much more general nonlinearities,
whose behavior can be quite far from power-like.
\smallskip

A second goal of this paper is to prove some parabolic Liouville type theorems for more general nonlinearities,
providing partial analogues of Theorem~D. 

\smallskip

Additionally, we shall show that elliptic and parabolic Liouville properties and universal estimates possess a kind of
stability with respect to the nonlinearity 
(and this in turn will give rise to further Liouville type theorems).
\smallskip

In a nutshell, our primary motivation is to find out how far one can go with rescaling methods and Liouville type theorems
when dealing with nonlinearities that are {\it not} scale invariant (even asymptotically).
Heuristic considerations relative to rescaling methods (see Section~\ref{SecHeurist}) 
lead to certain relevant classes of functions,
that we will introduce in Section~\ref{SecNot}.
Our results and applications are then presented in Sections~\ref{SecUB1}--\ref{SecStabil}.
The proofs are given in Sections~\ref{SecHeurist}--\ref{SecProofTech}.

\section{Notation and definitions} \label{SecNot}

\subsection{Operators}
We adopt a unified framework which covers elliptic and parabolic problems at the same time.
To this end, for arbitrary (possibly unbounded) given domain $\Omega\subset\Rn$ and $T>0$, we denote either
\be{Lell}
\mathcal{L}=-\Delta,\quad Q=\R^n,\quad D=\Omega,\quad {\cb S=\partial\Omega},\quad X=x,\quad d=d_E
\ee
or
\be{Lparab}
\mathcal{L}=\partial_t-\Delta,\ \ Q=\R^n\times\R,\ \ D=\Omega\times(0,T),\ \ 
{\cb S=\partial\Omega\times(0,T)},\ \  X=(x,t),\ \  d=d_P,
\ee
where $d_E, d_P$ respectively denote the elliptic and parabolic distances, namely 
$$d_E(x,y)=|x-y|,\quad d_P\bigl((x,t),(y,s)\bigr)=|x-y|+|t-s|^{1/2},\quad x,y\in\Rn,\ t,s\in\R.$$
On the other hand, we shall always assume
\be{hypcontpos}
\hbox{$f\in C([0,\infty))$ with
$f(s)>0$ for all $s>0$.}
\ee
By a strong solution of $\mathcal{L}u=f(u)$, we mean a function $u$ which belongs to $W^{2,r}_{loc}$ for each finite~$r$ 
(or $W^{2,1;r}_{loc}$ in the parabolic case) and satisfies the equation a.e. 
Moreover, when boundary conditions are imposed, a strong solution is assumed to be continuous up the boundary.
Of course strong solutions are classical if $f$ is locally H\"older continuous (which we do not assume in general).

\smallskip

\subsection{Nonlinearities}
We introduce two classes of functions. We say that:
\smallskip

$\bullet$ $f$ has {\it regular variation at $\infty$} (resp., $0$) with index $m\in\R$ if 
\be{hypregulvar}
L(s):= s^{-m} f(s) \hbox{ satisfies } 
\lim_{\lambda\to \infty \atop ({\rm resp.,}\, \lambda\to0)} \frac{L(\lambda s)}{L(\lambda)}=1\ \hbox{ for each $s>0$.}
\ee

$\bullet$ $f$ has {\it controled variation} if 
\be{hypcontrolvar}
\inf_{\lambda >0\atop s\in \mathcal{K}} \ \frac{f(\lambda s)}{f(\lambda)} >0
\ \hbox{ for each compact $\mathcal{K}\subset(0,\infty)$.} 
\ee

\noindent  
A positive function $L\in C(0,\infty)$ with the property in \eqref{hypregulvar}
is called a function with {\it slow variation} at $\infty$ (resp., at $0$).
Functions of regular and slow variation were introduced by Karamata~\cite{Ka}
and have been extensively studied. See \cite{Se} for a general reference
and e.g.,~\cite{BGT, Cir} for applications in probability theory and PDE, respectively.
Functions with controled variation (also called R-O varying functions),
were introduced in~\cite{Ka2}.
See Section~\ref{SubSecExVar} for examples, and Section~\ref{SubSecPrelim} for useful properties. 

\medskip

Also we will sometimes use the following properties. We say that:
\smallskip

$\bullet$ $f$ is {\it superlinear} if
\be{hypsuperlin}
\inf_{\lambda>0}\ {f(\lambda s)\over s f(\lambda)}\to \infty, \quad s\to \infty.
\ee

$\bullet$ $f$ is {\it almost decreasing}\footnote{almost decreasing functions
were introduced by S.N.~Bernstein \cite{Ber}.} 
if 
\be{hypmodervar}
\inf_{\lambda >0\atop s\in (0,1]} \ \frac{f(\lambda s)}{f(\lambda)} >0.
\ee

\subsection{Examples of functions with regular or controled variation} \label{SubSecExVar} 
If $L$ has regular variation at $0$ and $\infty$, then it has controled variation. 
Functions with controled (resp., regular)
variation are stable by multiplication,
as well as by raising to any real power.
Typical examples of functions $L$ with slow variation at $0$ and $\infty$,
and having singular or oscillating behaviors at $\infty$, are given~by: 
\be{examplesSlowVar}
\left\{\begin{aligned}
&\hbox{\ $\bullet$\  $\log^a (K+s)$ for $K>1$ and $a\in\R$,} \\
\noalign{\vskip 1mm}
&\hbox{\ $\bullet$\  the iterated logarithms {\cb $\log_m(K+s)$,\footnotemark}} \\
 \noalign{\vskip 1mm}
 &\hbox{\ $\bullet$\  $\exp(\log s/\log(K+|\log s|))$ for $K>1$,} \\
 \noalign{\vskip 1mm}
 &\hbox{\ $\bullet$\  $\exp(|\log s|^\nu)$ with $\nu\in (0, 1)$,} \\
 \noalign{\vskip 1mm}
 &\hbox{\ $\bullet$\  the strongly oscillating functions} \\
 &\hbox{\qquad $\bigl[\log(3+s)\bigr]^{\sin[\log\log(3+s)]}
\quad\hbox{ and }\quad
\exp[|\log s|^\nu\cos(|\log s|^\nu)],\ \ \nu\in (0, 1/2)$,} \\
\noalign{\vskip 1mm}
 &\hbox{\ $\bullet$\  $1 +a \sin\bigl(\log^\nu(2+s)\bigr)$ with $\nu\in (0, 1)$ and $|a|<1$.} \\
\end{aligned}
\right.
\ee
\footnotetext{\cb where $\log_m=\log\circ\dots\circ\log$ ($m$ times), $m\in\N^*$
and $K>[\exp\circ\dots\circ\exp](0)$.}

\noindent 
Similar behaviors at $0$ (or at both $0$ and $\infty$) are obtained by replacing $s$ by $s^{-1}$ or by $s+s^{-1}$.

\smallskip

Any positive function that is bounded and bounded away from zero has
controled variation. A typical example of strongly oscillating function with controled but not regular variation is given by
$$f(s)=s^{\sin[\log\log(3+s+s^{-1})]}.$$

\section{Nonlinearities with regular variation at infinity: \\ universal singularity estimates} \label{SecUB1}
Our first main result provides universal singularity estimates whenever $f$ has subcritical regular variation at infinity.

\goodbreak

\begin{thm} \label{thm1-0}
Assume \eqref{Lell} or \eqref{Lparab}, and \eqref{hypcontpos}.
Suppose that
\be{hypUB1a0}
\hbox{$f$ has regular variation at $\infty$ with index in $(1,p_S)$.}
\ee

(i) There exists a constant $C=C(n,f)>0$ such that, for any 
strong solution $u$ of
\be{eqf}
 \mathcal{L} u=f(u) \quad\hbox{in $D$},
 \ee 
we have the estimate
\be{EstimA-0}
{f(u(X))\over u(X)}\le C\bigl(1+d^{-2}(X,\partial D)\bigr),\quad\hbox{ for all $X\in D_+:=D\cap\{u\ge 1\}$.}
\ee

(ii) Let $\Omega\subset\R^n$ be a uniformly $C^2$ domain and $T>0$.
There exists a constant $C=C(n,f,\Omega)>0$ such that, for any 
strong solution $u$ of \eqref{eqf} with $u=0$ on ${\cb S}$,
we have the estimate
\be{EstimB}
\begin{cases}
u\le C&\hbox{in $D$, in case \eqref{Lell}} \\
\noalign{\vskip 1mm}
\ds\frac{f(u)}{u}\le C\bigl(1+t^{-1}+(T-t)^{-1}\bigr)&\hbox{in $D_+$, in case \eqref{Lparab}.}
\end{cases}
\ee
\end{thm}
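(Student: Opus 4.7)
My plan is to adapt the doubling-rescaling method of \cite{PQS1, PQS2} to this non-scale-invariant setting, using Karamata's theory of regularly varying functions to identify the limit equation. For part~(i), I set $M(X):=\bigl(f(u(X))/u(X)\bigr)^{1/2}$, a positive continuous function on $D_+$, and argue by contradiction: if \eqref{EstimA-0} fails, then there exist solutions $u_k$ on domains $D_k$ and points $X_k\in D_{k,+}$ along which $M_k(X_k)\bigl(1+d^{-1}(X_k,\partial D_k)\bigr)^{-1}\to\infty$. I then apply a doubling lemma in the spirit of \cite[\S5]{PQS1}, with respect to the elliptic or parabolic distance $d$, to produce new points $Y_k\in D_{k,+}$ satisfying $M_k(Y_k)\geq M_k(X_k)$, $M_k(Y_k)\,d(Y_k,\partial D_k)\to\infty$, and
$$M_k(Z)\leq 2\,M_k(Y_k)\quad\text{whenever } Z\in D_{k,+}\ \text{and}\ d(Z,Y_k)\leq k/M_k(Y_k).$$
Setting $a_k:=u_k(Y_k)$, regular variation with index $p>1$ forces $f(s)/s\to\infty$ as $s\to\infty$, and hence $a_k\to\infty$.

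Next I rescale with $\lambda_k:=1/M_k(Y_k)=\sqrt{a_k/f(a_k)}$, using the appropriate isotropic (elliptic) or parabolic scaling in the base variable, and define $v_k(Y):=a_k^{-1}u_k(Y_k+\lambda_k Y)$. A direct computation gives
$$\mathcal{L}v_k(Y)=\frac{L(a_k v_k(Y))}{L(a_k)}\,v_k^{\,p}(Y),\qquad L(s):=s^{-p}f(s),$$
where $L$ is slowly varying at $\infty$. Karamata's uniform convergence theorem yields $L(a_k s)/L(a_k)\to 1$ locally uniformly on compact subsets of $(0,\infty)$. The bound $M_k\leq 2M_k(Y_k)$ on the above neighborhood, together with Potter's inequalities for slowly varying functions (crucially using $p>1$), gives a uniform upper bound $v_k\leq C$ on each fixed ball or parabolic cylinder around the origin. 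Interior $W^{2,r}_{loc}$ and Schauder estimates then provide $C^{1,\alpha}_{loc}$-precompactness of $(v_k)$, and a subsequence converges to a limit $v\geq 0$ on all of $\R^n$ (resp., $\R^n\times\R$), the precompactness domain growing to the whole space thanks to $M_k(Y_k)\,d(Y_k,\partial D_k)\to\infty$. Combining the Karamata convergence on $\{v>0\}$ with the strong maximum principle (to propagate positivity from $v(0)=1$), the limit $v$ is a nontrivial entire solution of $\mathcal{L}v=v^p$, and Theorem~A (resp., Theorem~B) delivers the contradiction.

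For part~(ii), I modify $M$ to absorb the relevant boundary-singular weights (distance to $\partial\Omega$ in the elliptic case; $t^{-1/2}$ and $(T-t)^{-1/2}$ in the parabolic case) and run the same doubling-rescaling argument. Depending on whether the rescaled distance from $Y_k$ to $\partial\Omega$ (or to the time boundaries) stays bounded or diverges, the limit $v$ lives on a half-space (times $\R$ in the parabolic case) with homogeneous Dirichlet data on the flat boundary, or on the full space, and the appropriate interior or half-space Liouville theorem applies. The main technical obstacle is ensuring that the rescaled right-hand side $\lambda_k^2 a_k^{-1}f(a_k v_k)$ converges to the pure power $v^p$ rather than to a degenerate or fluctuating limit; this is exactly where the subcritical regular variation index $p\in(1,p_S)$ is used twice, both for the asymptotic monotonicity of $f(s)/s$ (giving the $L^\infty$ control on $v_k$ via Potter's inequalities) and for the availability of the Gidas--Spruck/parabolic Liouville theorems applied to the limit.
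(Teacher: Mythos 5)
Your strategy is essentially the one the paper takes: doubling applied to $M=(f(u)/u)^{1/2}$, rescaling by $\lambda_k=M_k^{-1}(Y_k)$, and Karamata's uniform convergence theorem to identify the limit equation as $\mathcal{L}v=v^m$. The one cosmetic difference is the definition of $M$: the paper sets $M=[f(u)/(1+u)]^{1/2}$, which is defined and continuous on all of $D$ with no hypothesis on $f$ near zero, so the doubling lemma is invoked on $D$ with $\Gamma=\partial D$ and the resulting inequality $M_k(Y_k)>2k$ then forces $u_k(Y_k)\to\infty$ directly from the continuity of $f$. Your variant, restricted to $D_+$, can also be made to run if one takes $\Sigma=D_{k,+}\cup\partial D_k$ in the doubling lemma (this set is closed, yields $\Gamma=\partial D_k$, and guarantees by connectedness that the rescaled cylinder stays inside $D_k$), so this is bookkeeping rather than a gap.

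There is, however, a genuine gap in the parabolic passage to the limit. The strong maximum principle for the heat operator propagates positivity only \emph{forward} in time; from $v(0,0)=1$ one only gets that $v\equiv 0$ on $\Rn\times(-\infty,\tau_0]$ and $v>0$ on $\Rn\times(\tau_0,\infty)$ for some $\tau_0\in[-\infty,0)$, and the Karamata convergence of $f(a_k v_k)/f(a_k)$ to $v^m$ is available only where $v$ is locally bounded away from zero, i.e.\ on $\Rn\times(\tau_0,\infty)$. If $\tau_0>-\infty$ you do not yet have a nontrivial bounded solution of $\mathcal{L}v=v^m$ on all of $\Rn\times\R$, and Theorem~B cannot be applied. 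You must rule out $\tau_0>-\infty$ separately: since $v$ is bounded, $v^m\le Cv$, so $v$ is a nonnegative bounded solution of $\mathcal{L}v\le Cv$ on $\Rn\times(\tau_0,\infty)$ with $v(\cdot,\tau_0)\equiv 0$, and the comparison principle then forces $v\equiv 0$, contradicting $v(0,0)=1$. The paper carries out exactly this extra step; as written, your assertion that the strong maximum principle already makes $v$ a nontrivial entire solution on $\Rn\times\R$ is not justified.
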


\medskip

 \begin{rem}\label{formHardy0}
 (i) We see that Theorem~\ref{thm1-0} applies to any (continuous) nonlinearity of the form
$$\hbox{$f(s)=s^pL(s)$, where $p\in(1,p_S)$ and $L$ has slow variation at $\infty$}.$$
This in particular covers all the functions $L$ in \eqref{examplesSlowVar}.
 Also, the nonnegativity of $f$ could be assumed only for $s$ large.
 
\smallskip

(ii) The form of estimate \eqref{EstimA-0} is natural.
It is reminiscent of the classical condition, of Hardy type, on the potential $V$ for the problem $-\Delta u=V(x)u$.
Of course, we may replace $D_+$ with $D$ if the function $f(s)/s$ is assumed to be bounded near $s=0$.

\smallskip

(iii) The upper estimates in Theorem~\ref{thm1-0} can be seen to be sharp in a number of cases.
For instance, in the parabolic case with $\Omega=\Rn$, this gives the blow-up 
rate\footnote{assuming $\limsup_{s\to 0}f(s)/s<\infty$ 
for simplicity}
\be{BUrate}
\ds\frac{f(u)}{u}\le C(T-t)^{-1}
\ee
and, under the assumptions of Theorem~\ref{thm1-0} (or Theorem~\ref{thm1}),
this coincides, up to a multiplicative constant, 
with the exact rate of the special ODE solutions 
(see Proposition~\ref{propSharp}). 
 \end{rem}
 
\smallskip

 \begin{rem}
{\bf Known results on blow-up for non scale invariant parabolic problems.} 

\smallskip

(i) The logarithmic equation 
\be{eqHZ}
u_t-\Delta u=|u|^{p-1}u\log^q(2+u^2),\quad x\in\Rn,\ t>0
\ee
  was recently studied in \cite{DNZ, HZ2}.
It is proved in \cite{HZ2} that, for $p\in(1,p_S)$ and $q\in\R$, 
any blowup solution of \eqref{eqHZ} with initial data in $L^\infty(\Rn)$
satisfies $|\frac{f(u)}{u}|\le C(T-t)^{-1}$ for some constant $C$ depending on the solution.
Note that, for nonnegative solutions, Theorem~\ref{thm1-0} contains this result as a special case
and that, unlike in \cite{HZ2}, the constant in \eqref{EstimB} is universal. 
However our approach does not apply to sign changing solutions.
The method in \cite{HZ2}, completely different from ours, relies on an explicit similarity change of variables
adapted to the special form of the nonlinearity, combined with suitable interpolation estimates.
See also \cite{DNZ} where a class of solutions of \eqref{eqHZ} with prescribed single-point blow-up profile is constructed,
and \cite{HZ} for related results on the corresponding nonlinear wave equation.

\smallskip

(ii) For \eqref{eqf2} in a bounded domain with rather general nonlinearities,
 but for the strongly restricted class of time increasing solutions,
we note that estimate \eqref{BUrate} follows from \cite[Section~4]{FML85}
(based on maximum principle methods).
On the other hand, under suitable assumptions on the non scale invariant nonlinearity $f$, 
results on blow-up sets 
for equation \eqref{eqf2} where obtained in \cite{Fuj} 
by means an appropriate nonlinear change of variables (``quasi-scaling'' transformation).
\end{rem}

\begin{rem} \label{RemKnownEll}
{\bf Known results on non scale invariant elliptic problems.} 
We refer to \cite{CirDu, Cir, GIR, CRT} for previous studies of nonlinear elliptic problems
 involving the class of regularly varying nonlinearities. 
The works \cite{CirDu, Cir} are devoted to the classification of isolated singularities for problems 
with absorption terms (i.e.,~with $\Delta$ instead of $-\Delta$), 
and are based on sub-/supersolution methods.
In \cite{CRT}, existence results are proved for a bifurcation problem with source. 
The methods there are different and rely on variational arguments.
Closer to our topic is the work \cite{GIR}, where a priori estimates are proved in the case of regularly varying, slightly superlinear nonlinearities with sign changing coefficients (typically of the form $-\Delta u=a(x)u\log^q(1+u)$ with $q>0$).
The rescaling argument used in \cite{GIR} provided us a clue to the more general arguments
that we develop here.
\end{rem}
 
\section{Other results on universal estimates}  \label{SecUB2}
 
\subsection{Nonlinearities with controled variation 
and decay estimates}
As an advantage, Theorem~\ref{thm1-0} has simple assumptions and covers the whole Sobolev subcritical range.
As a drawback, it only allows slow (e.g.~logarithmic) factors or oscillations around a given power.
Moreover, it does not allow to measure the possible decay of solutions as $|x|$ or $|t|\to\infty$,
since no assumptions have been made on the behavior of $f$ at $0$.
These two limitations are partially relaxed in the next result, under different sets of
 assumptions on $f$.
To this end we introduce the exponents
$$p_{sg}:=\frac{n}{(n-2)_+}\le p_S, \quad p_F:=\frac{n+2}{n}<p_B:=\frac{n(n+2)}{(n-1)^2},$$
and set
 $$\begin{cases}
\ p_c=p_S,\quad  p_*=p_{sg}, \quad 
n_*=2\quad&\hbox{in case \eqref{Lell}}, \\
\noalign{\vskip 1mm}
\ p_c=p_B,\quad p_*=p_F,\quad 
n_*=1 \quad&\hbox{in case \eqref{Lparab}}. \\
\end{cases}
$$
 
\smallskip

\goodbreak

\begin{thm} \label{thm1}
Assume \eqref{Lell} or \eqref{Lparab}, and \eqref{hypcontpos}.
Suppose either
\be{hypUB1c}
\left\{\begin{aligned}
&\ \hbox{$f(s)=s^pL^a(s)$ where $L$ has controled variation, $p\in(1,p_S)$,} \\
&\ \hbox{and $a>0$ is sufficiently small (depending on $n, p, L$),}
\end{aligned}
\right.
\ee
or
\be{hypUB1a}
\left\{\begin{aligned}
&\ n\le n_*\quad\hbox{or}\quad \hbox{ $f(s)/s^p$ nonincreasing for $s>0$ and some $p\in (1,p_c)$,} \\
&\ \hbox{$f$ has regular variation at $0$ and $\infty$ with indices in $(1,p_S)$,}
\end{aligned}
\right.
\ee
or
\be{hypUB1b}
\left\{\begin{aligned}
&\ \hbox{$f(s)/s^p$ is almost decreasing for some $p\in(1,p_*]$,} \\
&\ \hbox{$f$ has controled variation and is superlinear.}
\end{aligned}
\right.
\ee
Then, for any strong solution $u$ of \eqref{eqf};
we have the estimate
\be{EstimA}
{f(u(X))\over u(X)}\le {C\over d^2(X,\partial D)},\quad\hbox{ for all $X\in D$,}
\ee
where $C=C(n,f)>0$ in cases \eqref{hypUB1a}, \eqref{hypUB1b} and $C=C(n,p,L)>0$ in case \eqref{hypUB1c}.
\end{thm}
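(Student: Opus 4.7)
The strategy is a contradiction argument based on the doubling-rescaling method of \cite{PQS1,PQS2}, with a rescaling tuned to the non-scale-invariant nonlinearity $f$. Assume \eqref{EstimA} fails: there exist strong solutions $u_k$ of $\cL u_k=f(u_k)$ on domains $D_k$ and points $X_k\in D_k$ with
\[
M(X_k)\,d(X_k,\partial D_k)\to\infty,\qquad M(X):=\sqrt{f(u_k(X))/u_k(X)}.
\]
The doubling lemma applied to $M$ furnishes renormalized points $\tilde X_k$ with $M(\tilde X_k)\,d(\tilde X_k,\partial D_k)\to\infty$ and $M(Y)\le 2M(\tilde X_k)$ on the ball (or parabolic cylinder) of radius $\sim k/M(\tilde X_k)$ around $\tilde X_k$. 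Setting $U_k:=u_k(\tilde X_k)$, $\lambda_k:=\sqrt{U_k/f(U_k)}=1/M(\tilde X_k)$, and
\[
v_k(Y):=u_k(\tilde X_k+\lambda_k Y)/U_k
\]
(with the natural parabolic modification), a direct computation gives $\cL v_k=F_k(v_k)$, where $F_k(s):=f(U_k s)/f(U_k)$, with $v_k(0)=1$ and $F_k(v_k)\le 4v_k$ wherever $v_k\ge 1$, on rescaled domains exhausting $\R^n$ (resp.\ $\R^n\times\R$).

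\textbf{Limit nonlinearity and Liouville contradiction.} In each case one analyses the possible limits of $U_k$ and $F_k$. Under \eqref{hypUB1a}, regular variation of $f$ at both $0$ and $\infty$ with indices $p_0,p_\infty\in(1,p_S)$ and Karamata's theorem force $F_k(s)\to s^{q}$ locally uniformly on $(0,\infty)$, with $q=p_\infty$ (resp.\ $p_0$) along subsequences where $U_k\to\infty$ (resp.\ $U_k\to 0$); the auxiliary hypothesis ``$n\le n_*$ or $f(s)/s^p$ nonincreasing'' places the limit equation $\cL v=v^{q}$ in the range of applicability of Theorem~A (elliptic) or Theorem~B / \cite{Q21} (parabolic), sharpened up to $p<p_B$ in the parabolic case by the monotonicity assumption. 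Under \eqref{hypUB1b}, superlinearity forces $U_k\to\infty$, controled variation yields locally uniform two-sided bounds on $F_k$, and the almost decreasing property of $f(s)/s^p$ passes to the limit $F$, giving $F(s)/s^p$ nonincreasing; the admissible range $p\in(1,p_*]$ is precisely where the corresponding (singular-range) Liouville theorem holds. Under \eqref{hypUB1c}, writing $f(s)=s^p L^a(s)$, controled variation of $L$ yields $F_k(s)=s^p G_k(s)$ with $G_k$ locally bounded away from $0$ and $\infty$, hence a limit $F(s)=s^p G(s)$. In every case, interior $L^\infty$ estimates combined with elliptic/parabolic Schauder theory yield $C^{2,\alpha}_{loc}$ (resp.\ $C^{2,1;\alpha}_{loc}$) compactness, so that the subsequential limit $v\ge 0$ is a strong solution of $\cL v=F(v)$ on $\R^n$ (resp.\ $\R^n\times\R$) with $v(0)=1$, contradicting the invoked Liouville theorem.

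\textbf{Main obstacle.} The hardest part is case \eqref{hypUB1c}: controled variation only delivers two-sided bounds on $G_k$, so the limit $F$ is not canonical and may itself fail to be scale invariant. What is needed is a Liouville theorem uniform over the class of perturbations $F(s)=s^p G(s)$ with $G$ in such a quasi-constant class, together with an explicit smallness condition on $a$ (in terms of $n,p,L$) guaranteeing that $F(s)/s^{p_S}$ is nonincreasing and nonconstant, so that Theorem~D (or its parabolic analogue, to be established in this paper) may be invoked. A related difficulty in case \eqref{hypUB1b} is transferring the almost decreasing property through the rescaling limit and appealing to a sharp parabolic Liouville theorem beyond the $p_S$-range permitted by Theorem~B.
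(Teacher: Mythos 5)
Your doubling--rescaling skeleton --- the choice $M(X)=\sqrt{f(u(X))/u(X)}$, the renormalized points from the doubling lemma, and the rescaled solutions $v_k$ with nonlinearity $F_k(s)=f(U_ks)/f(U_k)$ --- is exactly the paper's. But the routes through the three hypotheses diverge, and case \eqref{hypUB1c} is a genuine gap.

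Under \eqref{hypUB1a} you have misassigned the role of the auxiliary hypothesis ``$n\le n_*$ or $f(s)/s^p$ nonincreasing''. It is not needed to put the limiting power $q$ in the Theorem~A/B range: regular variation already forces $q\in(1,p_S)$. It is needed for the subsequence along which $U_k$ stays bounded away from $0$ and $\infty$: there the limit equation is $\mathcal{L}w=f(w)$ (after a fixed rescaling), not a power, so one needs the Liouville property for $f$ itself, i.e.\ \eqref{HypLiouville}. The paper supplies this via Theorem~D (elliptic, $n\ge 3$), via superharmonicity for $n\le 2$, and via the new parabolic Theorem~\ref{S-thmLiouvJLV2}, which is exactly where ``$n=1$ or $f/s^p$ nonincreasing with $p<p_B$'' is invoked. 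Your sketch omits the bounded-$U_k$ case entirely; without Theorem~\ref{thm1c} and a Liouville theorem for $f$, the argument does not close.

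Under \eqref{hypUB1b} you assert that superlinearity forces $U_k\to\infty$ and that the almost-decreasing property must be ``passed to the limit $F$''; neither is required. The paper instead uses almost-decreasing together with controlled variation to produce, directly at the $v_k$-level, the two-sided bound $\theta_A v_k^p\le \mathcal{L}v_k\le C$ with $\theta_A:=\inf_{\lambda>0,\,0<s\le A}\bigl[f(\lambda s)/(s^pf(\lambda))\bigr]>0$. The limit $w$ is then a nontrivial bounded \emph{supersolution} of $\mathcal{L}w\ge cw^p$, and the contradiction comes from the supersolution Liouville theorem (Lemma~\ref{lemGidFuj}), valid precisely for $p\le p_*$ (that is, $p\le p_{sg}$ elliptic, $p\le p_F$ parabolic). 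Your phrase ``a sharp parabolic Liouville theorem beyond the $p_S$-range'' has the picture inverted: the constraint $p\le p_*$ lies well \emph{below} $p_S$ and is exactly the natural range of the differential-inequality result.

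The main gap is \eqref{hypUB1c}, which you flag but do not resolve. Your proposed fix --- choose $a$ so small that $s^{-p_S}f(s)=s^{p-p_S}L^a(s)$ is nonincreasing and invoke Theorem~D --- presupposes $L\in C^1$ (controlled variation does not imply that) and has no parabolic counterpart. The paper takes a different route: it proves a stability theorem (Theorem~\ref{thm2eps}) for one-parameter families $\{f_\eps\}$, showing that if the three limiting nonlinearities $g$, $\phi$, $\psi$ obtained as $\eps\to0$ all enjoy the Liouville property, then the universal estimate holds uniformly for all sufficiently small $\eps$. Choosing $f_\eps(s)=s^pL^\eps(s)$ and using controlled variation, one finds $g=\phi=\psi=s^p$, the pure power, for which Theorems~A and~B provide the Liouville property in the full range $p<p_S$. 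The smallness of $a$ in \eqref{hypUB1c} is then inherited from this limiting argument (hence no explicit formula), and the elliptic and parabolic cases are treated on equal footing. You would need this stability mechanism, or an equivalent one, to close case \eqref{hypUB1c}.
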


We will see (cf.~the examples in the next subsection)
that conditions \eqref{hypUB1c} and \eqref{hypUB1b} allow nonlinearities that 
can be very far from any precise power behavior,
with infinite oscillations between two different powers.

\begin{rem} \label{remDecay}
(i) {\bf Sharpness of the assumptions.} 
Observe that the ``homogeneous'' estimate \eqref{EstimA} implies\footnote{taking $D=B_R$ or $D=B_R\times(-R^2,R^2)$ and 
{\cb letting} 
$R\to\infty$.} 
the Liouville property in $Q$.
Consequently \eqref{EstimA} in particular fails whenever
\eqref{S-LiouvLZ} admits a positive solution.
In this respect we have the following useful counter-example.
Let
$${\cb f}(s)=\bigl[A+B\min(s^{p-1},1)\bigr] s^p,\quad A,B>0,
\quad \hbox{ with $n\ge 3$ and $p>n/(n-2)$.}$$
Direct computation shows that, for suitable $A,B>0$, the function $v(x)=(1+|x|^2)^{-1/(p-1)}$ 
is a solution of \eqref{S-LiouvLZ}.

It follows that the smallness assumption on $a$ in \eqref{hypUB1c} cannot be relaxed
(noting that $L(s)=A+B\min(s^{p-1},1)$ has controled variation).
Likewise, for $n\ge 3$ and $p>n/(n-2)$, condition \eqref{hypUB1b}  is no longer sufficient.
Also, the monotonicity condition in \eqref{hypUB1a} is not technical.
In the parabolic case, we refer to \cite{Tal09, Kur} and \cite[Proposition~21.14]{QSb} for related counter-examples with $p>p_F$.

\smallskip

(ii) {\bf Decay of bounded solutions.}
If one considers bounded solutions of \eqref{eqf} in an unbounded domain $D$
and is only interested in the decay rates as $|x|$ and/or $|t|\to\infty$, 
then the assumptions on $f$ for large $s$ (namely $s\ge M=\|u\|_\infty$) are not needed,
and estimate \eqref{EstimA} remains true (with a constant $C$ depending also on $M$).\footnote{Indeed 
it suffices to redefine the nonlinearity by $\tilde f(s)=f(M)(s/M)^q$ for $s>M$ with $q>1$ close to $1$,
and to apply Theorem~\ref{thm1} to this modified~$f$.}
\end{rem}

We also have {\cb an} analogue of Theorem~\ref{thm1} for boundary value problems,
for which we define the additional exponent
$p_{**}=\frac{n+1}{n-1}$ in case \eqref{Lell} and 
$p_{**}=1+\frac{2}{n+1}$ in case \eqref{Lparab}.

\begin{thm} \label{thm1b}
Let $\mathcal{L}, f$ be as in Theorem~\ref{thm1}
{\cb under either assumption \eqref{hypUB1c}, or \eqref{hypUB1b} with} $p_*$ replaced by $p_{**}$.
Let $\Omega\subset\R^n$ be a uniformly $C^2$ domain and $T>0$.
There exists a constant $C=C(n,f,\Omega)>0$ such that any 
strong solution $u$ of \eqref{eqf} with $u=0$ on ${\cb S}$ satisfies \eqref{EstimB}.
\end{thm}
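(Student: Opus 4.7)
The plan is to derive Theorem~\ref{thm1b} from Theorem~\ref{thm1} by extending the doubling-rescaling argument to handle points near $S$, and to close it via a half-space Liouville theorem. The sharper exponent $p_{**}$ enters precisely because the unconditional half-space Liouville theorem of Gidas-Spruck type (resp.\ its parabolic analogue) for bounded positive solutions of $\mathcal{L}v=v^p$ with zero Dirichlet data is available only for $p\in(1,p_{**}]$.

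Argue by contradiction: if no universal $C$ works, produce solutions $u_k$ of \eqref{eqf} with $u_k=0$ on $S$ and points $X_k\in D$ along which the quantity
$$\frac{N(u_k)(X_k)}{1+t_k^{-1/2}+(T-t_k)^{-1/2}}\longrightarrow\infty,\qquad N(u):=\bigl(f(u)/u\bigr)^{1/2},$$
diverges in case \eqref{Lparab} (with $N(u)$ a suitable power of $u$ and constant time-weight in case \eqref{Lell}). Apply the doubling lemma of \cite{PQS1} to $N(u_k)$ on $D$ to find refined points $Y_k$ with $N(u_k)(Y_k)\to\infty$ and $N(u_k)\le 2N(u_k)(Y_k)$ on the $d$-ball of radius $k\mu_k$ about $Y_k$, where $\mu_k:=1/N(u_k)(Y_k)$. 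The time weights in the normalization force $Y_k$ to stay away from $\{t=0,T\}$ at scale $\mu_k$. Writing $\delta_k:=d(Y_k,S)/\mu_k$, the interior estimate of Theorem~\ref{thm1} forbids $\delta_k\to\infty$, so one may assume $\delta_k\to\delta_\infty\in[0,\infty)$.

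Next rescale and flatten: let $\lambda_k:=u_k(Y_k)$ (so $\mu_k^2=\lambda_k/f(\lambda_k)$), let $P_k\in\partial\Omega$ be a nearest point to the spatial component of $Y_k$, and use the uniform $C^2$ regularity of $\partial\Omega$ to straighten $\partial\Omega$ near $P_k$ by a diffeomorphism $\Phi_k$. Define
$$v_k(Z):=\lambda_k^{-1}\,u_k\bigl(P_k+\mu_k\Phi_k^{-1}(Z)\bigr),$$
with the obvious time translation and parabolic rescaling $t\mapsto\mu_k^2 t$ in case \eqref{Lparab}. Then $v_k$ solves $\mathcal{L}_kv_k=f_k(v_k)$ on an expanding subdomain of $\mathcal{H}:=\{z_n>0\}$ (resp.\ $\mathcal{H}\times\R$), with $v_k=0$ on $\{z_n=0\}$, $\mathcal{L}_k\to\mathcal{L}$, and $f_k(s):=f(\lambda_k s)/f(\lambda_k)$ normalized by $f_k(1)=1$. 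The doubling bound yields $v_k\le C$ locally, and $v_k(Z^*_k)=1$ for some $Z^*_k\to Z^*_\infty$ at height $\delta_\infty$.

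Under either hypothesis, $\{f_k\}$ is locally uniformly bounded above and below on compacts of $(0,\infty)$, and standard $W^{2,r}_{loc}$ or Krylov-Safonov estimates extract a subsequential limit $v_k\to v$, $f_k\to g$, locally uniformly, with $v$ bounded, nontrivial, nonnegative, solving $\mathcal{L}v=g(v)$ either on $\mathcal{H}$ (resp.\ $\mathcal{H}\times\R$) with $v=0$ on $\{z_n=0\}$, or on the full space $Q$ if $\delta_\infty=\infty$. Under \eqref{hypUB1c}, slow variation of $L$ forces $g(s)=s^p$ with $p\in(1,p_S)$; the classical half-space Liouville theorem for bounded solutions (and Theorem~A or~B in the full-space case) yields $v\equiv 0$, a contradiction. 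Under \eqref{hypUB1b} with $p_{**}$ replacing $p_*$, controled variation provides a sandwich $c_1 s^p\le g(s)\le c_2 s^p$ for some $p\in(1,p_{**}]$, and the half-space Liouville theorem available in this narrower unconditional range again forces $v\equiv 0$. The main obstacle is precisely this last step under \eqref{hypUB1b}: extracting a usable limit nonlinearity from controled-variation data and ensuring that a Liouville theorem applies to the limit problem. This constraint dictates the passage from $p_*$ to $p_{**}$, the latter being the exponent up to which the half-space Liouville property is unconditionally known for bounded positive solutions of power-type problems.
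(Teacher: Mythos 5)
Your overall strategy (doubling lemma, boundary flattening using uniform $C^2$ regularity, passage to a half-space limit problem, then a half-space Liouville theorem, with the interior case handled by Theorem~\ref{thm1}) matches the paper's route and the identification of $p_{**}$ as the threshold exponent is exactly right. However, there is a genuine gap in your treatment of the case \eqref{hypUB1c}. You write that ``slow variation of $L$ forces $g(s)=s^p$,'' but \eqref{hypUB1c} only assumes \emph{controled} variation of $L$, not slow variation. With controled variation, the ratios $L(\lambda_k s)/L(\lambda_k)$ are uniformly bounded above and below but need not converge (they may oscillate persistently); consequently $f_k(s)=s^p\bigl[L(\lambda_k s)/L(\lambda_k)\bigr]^a$ need not converge to $s^p$ along any subsequence. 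The paper sidesteps this by not taking a single rescaling limit: it proves Theorem~\ref{thm1b} under \eqref{hypUB1c} as a corollary of the stability result Theorem~\ref{thm2epsB}, writing the nonlinearity as $f_\eps(s)=s^pL^\eps(s)$ and running a \emph{double} limit $\eps\to 0$, $\lambda\to\infty$, under which $\bigl[L(\lambda s)/L(\lambda)\bigr]^\eps\to 1$ precisely because the inner ratio stays bounded. The crucial observation \eqref{hyppsiepsB} (uniformity in the two-parameter limit) is what makes the rescaled nonlinearities converge to a pure power, and this is not recoverable from your single rescaling argument.

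For the case \eqref{hypUB1b} with $p_{**}$, your description is essentially correct but imprecise at the key step. Controled variation gives bounds on $f_k(s)/s^p$, but the argument in the paper does not invoke a Liouville theorem for equations with a sandwiched nonlinearity in the half-space. Rather it uses the almost-decreasing property of $f(s)/s^p$ to deduce that the rescaled $v_k$ satisfies the pointwise \emph{inequality} $\mathcal{L}v_k\ge c\,v_k^p$, passes to the limit, and then appeals to the Liouville theorem for \emph{supersolutions} of $\mathcal{L}u\ge u^p$ in the half-space, which is available unconditionally precisely for $p\le p_{**}$ (the paper cites \cite{MP} in the parabolic case and \cite{BCDN} in the elliptic case). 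Phrasing this as a Liouville theorem for ``bounded positive solutions of power-type problems'' obscures that the supersolution formulation is what makes controled-variation nonlinearities tractable. You should make this lower-bound/supersolution mechanism explicit; as it stands the passage from the ``sandwich'' to a usable Liouville theorem is a step that would not close.
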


\begin{rem} \label{remOther}
{\bf Known results in the elliptic lower range.} 
In the lower range corresponding to assumption \eqref{hypUB1b}
in the elliptic case, some the estimates
are already known, sometimes under weaker assumptions
or sharper form.
However we have included this in Theorems~\ref{thm1}-\ref{thm1b} 
since the conclusion follows from our unified general approach
and still seems new in some cases.

Namely, under assumption \eqref{hypUB1b} (with $p_{**}=(n+1)/(n-1)$ instead of $p_*$),
estimate \eqref{EstimB} from Theorem~\ref{thm1b} 
has been already known by other methods, except for the critical case $p=p_{**}$.
More precisely for the elliptic Dirichlet problem in bounded domains, 
the a priori estimate $u\le C$ holds for any continuous $f$ such that
$\lim_{s\to\infty} f(s)/s=\infty$ and $\lim_{s\to\infty} f(s)/s^{p_{**}}=0$.
See \cite{BT}, \cite{BVV, QS04} and \cite{Sir}, where the methods are respectively based 
on Hardy-Sobolev inequalities, on $L^p_\delta$ spaces, and on Harnack inequalities.
These works also treat some other problems (gradient nonlinearities, systems, 
 fully nonlinear equations, ...).
 As for the case of isolated singuarities (i.e.,~Theorem~\ref{thm1} with $\Omega=B_1\setminus\{0\}$), 
 it is shown in~\cite{Tal0} that any solution of
 $0\le -\Delta u\le u^{n/(n-2)}$ satisfies the less singular estimate $u\le O(|x|^{2-n})$ as $|x|\to 0$.
\end{rem}

\subsection{Examples for Theorems~\ref{thm1} and \ref{thm1b}} \label{SubSecEx}

The conclusions of Theorem~\ref{thm1} 
hold for the functions $f$ in the following examples
(see Section~\ref{SecProofTech} for details).

\smallskip

 {\it Example 1.} 
$f(s)=s^p\log^q(K+s),$
where $p, q$ satisfy either
\begin{itemize}
\item[(i)] $1<p<p_*$, $q\in \R$ and $K>1$, or

\item[(ii)] $p_*\le p<p_c$ and either:
\begin{itemize}
\item[(ii1)] $q<p_c-p$ and $K\ge 1$;
\item[(ii2)] $q=p_c-p$ and $K>1$;
\item[(ii3)] $q>p_c-p$ and $K> {p_c-p\over q}\exp\bigl[{q\over p_c-p}-1\bigr] \ (>1)$. 
\end{itemize}

\item[(iii)] (in the parabolic case) $p_B\le p<p_S$ and $|q|$ small.
\end{itemize}
Moreover, these conditions are not technical.
Indeed, if $p,q,K$ satisfy either 
\be{Ex1a2}
1<p<p_S,\ \ q>p_S-p,\ \ K=1\quad\hbox{or}\quad p\ge p_S,\ \ q>0,\ \ K\ge 1,
\ee
then there exists a positive solution $v$ of $-\Delta v=v^p\log^q(K+v)$ in $\R^n$ (see Proposition~\ref{lemEx1b})
and consequently, estimate \eqref{EstimA} fails in that case
(cf.~Remark~\ref{remDecay}(i)).

\medskip

{\it Example 2.} Our ``wildest'' typical example in terms of oscillations 
(with optimal range of~$p$) is given by:
\be{Ex1a}
f(s)=s^{p+a\sin[\log\log(3+s+s^{-1})]}
\ee
with $p\in(1,p_S)$ and $a\in(0,p-1)$ small (depending on $p, n$).
This nonlinearity is very far from any precise power behavior, with
infinite oscillations between the powers $p-a$ and $p+a$ both at $\infty$ and at $0$.
The only drawback is that the size of $a$ is not quantitatively estimated.

\medskip

{\it Example 3.} 
$$\hbox{$f(s)=s^p+s^q$ \quad or \quad}
f(s)=\begin{cases}
s^p,&\hbox{ if $s\in[0,1]$} \\
\noalign{\vskip 1mm}
s^q,&\hbox{ if $s\ge 1$,}
\end{cases}
\quad\hbox{ with $1<p,q<p_c$. }
$$
This provides examples with different power rates for singularities and for decay in \eqref{EstimA}.

\smallskip

{\it Example 4.}  Any $f$ satisfying \eqref{hypcontpos} and such that
\be{Ex1}
\hbox{$f(s)/s^{p_*}$ is nonincreasing and $f(s)/s^{\ell}$ is nondecreasing for some $\ell>1$}
\ee
(with $p_*$ replaced by any finite $p>1$ if $p_*=\infty$).
As in the previous example, this allows nonlinearities that are strongly oscillatory.
Namely, for any $1<m<p<p_*$ there exist such nonlinearities which satisfy
$f(s_i)=s_i^p$ and $f(t_i)=t_i^m$ for some sequences $s_i,t_i\to\infty$
(see Lemma~\ref{lemEx4}).
Note that such $f$ (as well as that in \eqref{Ex1a}) do not have regular variation.
 One can find similar examples concerning the behavior of $f$ at $0$ (or even at both $0$ and $\infty$).

\smallskip

{\it Example 5.} Any $f\in C([0,\infty))$ such that
\be{Ex2}
c_1 s^p\le f(s)\le c_2 s^p,
\quad \hbox{with $1<p\le p_*$ and $c_1, c_2>0$.}
\ee
We see from the counter-example in Remark~\ref{remDecay}(i) that condition \eqref{Ex2}  is no longer sufficient
for $n\ge 3$ and $p>n/(n-2)$.
We also note that for each $p\in (1,p_S)$,
\eqref{EstimA} was known to be true if \eqref{Ex2} is satisfied with $c_2/c_1$ close enough to $1$.
See \cite{Tal} and \cite[Remarks 8.5 and 8.8]{QSb} for the elliptic case, and 
\cite[Remark~26.11a]{QSb} (in combination with Theorem~B) for the parabolic case.

\medskip

{\cb The conclusions of Theorem~\ref{thm1b} also hold for $f$ as in Example~2, or as in Examples~4-5
with $p_*$ replaced by $p_{**}$.}

\subsection{From Liouville type theorems to universal estimates}

 The following result shows that, under the regular variation assumptions,
the Liouville property guarantees the validity of universal estimates. 
It will be used in the proof of Theorem~\ref{thm1} in case \eqref{hypUB1a}.

\begin{thm} \label{thm1c}
Assume \eqref{Lell} or \eqref{Lparab}, and \eqref{hypcontpos}.
Suppose that
\be{HypLiouville}
\hbox{$\mathcal{L} v=f(v)$
has no nontrivial bounded strong solution in $Q$}
\ee
and
\be{HypLiouvilleReg}
\hbox{$f$ has regular variation at $0$ and $\infty$ with indices in $(1,p_S)$.}
\ee
There exists a constant $C=C(n,f)>0$ such that, for any domain $D\subset Q$ and any 
solution $u$ of
\eqref{eqf} we have the estimate \eqref{EstimA}.

\end{thm}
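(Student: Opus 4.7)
The plan is to argue by contradiction along the doubling--rescaling scheme of \cite{PQS1,PQS2}. If \eqref{EstimA} failed for every $C$, there would be sequences of domains $D_k\subset Q$, strong solutions $u_k$ of \eqref{eqf} on $D_k$, and points $X_k\in D_k$ with
\[
N_k(X_k)\,d(X_k,\partial D_k)\to\infty,\qquad N_k(X):=\sqrt{f(u_k(X))/u_k(X)}.
\]
Applying the doubling lemma of \cite{PQS1} in $(Q,d)$, I would replace $X_k$ by a new point $Y_k$ still satisfying $N_k(Y_k)\,d(Y_k,\partial D_k)\to\infty$, together with the doubling bound $N_k\le 2 N_k(Y_k)$ on the $d$-ball of radius $k/N_k(Y_k)$ around $Y_k$. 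Setting $\lambda_k:=u_k(Y_k)>0$ (note $u_k>0$ in $D_k$ by the strong maximum principle) and $\mu_k:=N_k(Y_k)^{-1}=\sqrt{\lambda_k/f(\lambda_k)}$, I would introduce the rescaled function
\[
v_k(X):=\lambda_k^{-1}\,u_k\bigl(Y_k+(\mu_k x,\mu_k^2 t)\bigr),
\]
with the $t$-variable omitted in case \eqref{Lell}. A direct computation gives $v_k(0)=1$ and $\mathcal{L}v_k=g_k(v_k)$ on the $d$-ball of radius $k$ around the origin in rescaled coordinates, where $g_k(s):=f(\lambda_k s)/f(\lambda_k)$.

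The main step, and the one I expect to be the principal obstacle, is a uniform bound $v_k\le M_0$ (with $M_0$ independent of $k$). Writing $\phi(s):=f(s)/s$, the doubling bound rewrites as $\phi(\lambda_k v_k(X))\le 4\,\phi(\lambda_k)$ for all $X$ in the rescaled doubling ball. Since $f$ is regularly varying at $0$ and $\infty$ with indices $q,p\in(1,p_S)$, the function $\phi$ is regularly varying at $0$ and $\infty$ with positive indices $q-1,p-1$. Passing to a subsequence with $\lambda_k\to\lambda_\infty\in[0,+\infty]$ and invoking Potter's bounds for regularly varying functions, one checks in each of the three regimes $\lambda_\infty=+\infty$, $\lambda_\infty=0$, $\lambda_\infty\in(0,\infty)$ that $v_k(X)\to+\infty$ along any subsequence would force $\phi(u_k(X))/\phi(\lambda_k)\to+\infty$, contradicting the doubling inequality. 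This is precisely the step that uses the regular-variation hypothesis \eqref{HypLiouvilleReg}: controled variation alone would not supply the required quantitative one-sided power comparison.

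Once $v_k\le M_0$ is established, Karamata's uniform convergence theorem implies that $g_k\to g_\infty$ uniformly on compacts of $(0,\infty)$, where
\[
g_\infty(s)=\begin{cases}s^p&\text{if }\lambda_\infty=+\infty,\\ s^q&\text{if }\lambda_\infty=0,\\ f(\lambda_\infty s)/f(\lambda_\infty)&\text{if }\lambda_\infty\in(0,\infty).\end{cases}
\]
Interior $W^{2,r}$ elliptic (resp.\ $W^{2,1;r}$ parabolic) estimates applied to $\mathcal{L}v_k=g_k(v_k)$, combined with a diagonal extraction, then yield a subsequence $v_k\to v_\infty$ in $C_{loc}(Q)$, with $v_\infty$ bounded, nonnegative, $v_\infty(0)=1$, and $\mathcal{L}v_\infty=g_\infty(v_\infty)$ in $Q$ in the strong sense.

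To close, in the first two cases $v_\infty$ is a bounded nontrivial nonnegative solution of $\mathcal{L}v=v^p$ or $\mathcal{L}v=v^q$ in $Q$ with exponent in $(1,p_S)$, contradicting Theorem~A when $\mathcal{L}=-\Delta$ and Theorem~B when $\mathcal{L}=\partial_t-\Delta$. In the third case, setting $a:=\lambda_\infty/f(\lambda_\infty)>0$ and $W(X):=\lambda_\infty v_\infty(a^{-1/2}x,a^{-1}t)$ (the second variable being omitted in case \eqref{Lell}), a direct computation shows that $\mathcal{L}W=f(W)$ in $Q$ with $W$ bounded and nontrivial, contradicting the Liouville hypothesis \eqref{HypLiouville}.
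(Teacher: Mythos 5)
Your proposal follows essentially the same doubling--rescaling route as the paper's proof: the quantity you rescale by, $N_k(X)=\sqrt{f(u_k(X))/u_k(X)}$, is exactly the paper's $M(u)$ with $\sigma=0$; your Potter-type bounds are the content of the paper's Lemma~\ref{lemRegVar}(ii); and the case split on $\lambda_\infty\in\{0,(0,\infty),\infty\}$ is the paper's cases (ii), (iii), (i), concluding against Theorem~D/Theorem~\ref{S-thmLiouvJLV2} (via Theorems~A, B embedded in them) and against \eqref{HypLiouville}.

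One step is stated too quickly and deserves a remark. You assert that interior estimates plus uniform convergence of $g_k\to g_\infty$ on compacts of $(0,\infty)$ give $\mathcal L v_\infty=g_\infty(v_\infty)$ on all of $Q$. But in the parabolic case the strong maximum principle only yields $v_\infty>0$ on $\R^n\times(\tau_0,\infty)$ for some $\tau_0\in[-\infty,0)$, with $v_\infty\equiv 0$ for $t\le\tau_0$; and Karamata's uniform convergence is available only on compacts bounded away from $0$. The paper's proof therefore first establishes the limit equation on $\R^n\times(\tau_0,\infty)$ and then separately rules out $\tau_0>-\infty$ by a backward-in-time maximum-principle argument. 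Your claim can in fact be justified directly, because the two-sided power bound derived from \eqref{lemRegVar3} (namely $f(s\lambda)/f(\lambda)\le Cs^q$ for $s\le 1$, uniformly in $\lambda$) forces $g_k(v_k)\to 0 = g_\infty(0)$ wherever $v_\infty$ vanishes, so $g_k(v_k)\to g_\infty(v_\infty)$ pointwise with a uniform bound; but this observation should be made explicit, since it is exactly the spot where regular variation at \emph{both} $0$ and $\infty$ is needed and where the paper chooses a slightly different (and perhaps more robust) route. With that clarification, the proof is correct and matches the paper's argument.
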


\begin{rem}
Theorem~\ref{thm1c} shows that,
as already noted in \cite{PQS1, PQS2} for the special case $f(s)=s^p$,
the Liouville type property \eqref{HypLiouville} is 
actually {\it equivalent} to the validity of universal estimates.
Indeed estimate \eqref{EstimA} implies that equation \eqref{eqf} with $D=Q$
does not admit any nontrivial solution
(cf.~Remark~\ref{remDecay}(i)).
Note that, through \eqref{EstimA}, property \eqref{HypLiouville} in turn implies nonexistence of unbounded entire solutions.
\end{rem}

By suitable modifications of {\cb the} proofs 
(cf.~\cite{PQS1,PQS2}),
similar results could be extended to, e.g., systems or quasilinear problems.
We leave this to the interested readers.

\section{Parabolic Liouville type theorems without scale invariance}  \label{SecResLiouvParab}

As mentioned above, it is unknown whether the parabolic analogue of Theorem~D is true.
The proof of Theorem~D is based on moving planes arguments which do not seem 
to carry over to the parabolic case. As for the delicate proof \cite{Q21} of Theorem~B, 
it makes a crucial use of the similarity change of variables and seems
 to require the exact form of the pure power nonlinearity.
We shall here give a partial positive answer, by adapting the proof in \cite{BV98}
which is a parabolic extension of the method in~\cite{GSa, BVV}, based on 
 integral estimates combined with Bochner's formula.
Thus consider the equation
\be{S-LiouvBVeqnf}
u_t-\Delta u=f(u),\qquad x\in \Rn,\quad t\in \R.
\ee 

 As typical cases, we have the following result.

\begin{thm} \label{S-thmLiouvJLV2}
Assume \eqref{hypcontpos} and
\be{S-LiouvParabHyp1}
n=1\quad\hbox{or}\quad \hbox{ $f(s)/s^p$ nonincreasing for $s>0$ and some $p\in (1,p_B)$.} 
\ee
In addition, suppose that either
\begin{eqnarray}
\hskip-17mm&&\hbox{$f$ has regular variation at $0$ and $\infty$ with indices $>1$; or} \label{S-LiouvParabHyp1b} \\
\noalign{\vskip 1mm}
\hskip-17mm&&\hbox{$f$ is nondecreasing near $\infty$, $f(s)\ge s^q$ at $\infty$ and $f(s)/s^\sigma$ is nondecreasing near $0$,}
 \label{S-LiouvParabHyp3}
\end{eqnarray}
 for some $q>1$, $\sigma>0$.
Then equation \eqref{S-LiouvBVeqnf}
has no nontrivial 
strong solution.
\end{thm}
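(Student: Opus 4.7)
The overall strategy is to adapt the Bochner-type integral method of \cite{BV98} to the present non-scale-invariant setting, splitting the argument according to whether \eqref{S-LiouvParabHyp1b} or \eqref{S-LiouvParabHyp3} is in force. Under \eqref{S-LiouvParabHyp1b}, I would first observe that the monotonicity of $f(s)/s^p$ with $p<p_B<p_S$ coming from \eqref{S-LiouvParabHyp1}, combined with the assumption that $f$ has regular variation at $0$ and $\infty$ with indices strictly greater than $1$, in fact forces both indices to lie in $(1,p_S)$. Consequently $f$ falls under hypothesis \eqref{hypUB1a} of Theorem~\ref{thm1}. Applied to $D=Q=\R^n\times\R$, that theorem yields $f(u(X))/u(X)\le C/d^2(X,\partial D)=0$ for every $X\in Q$, and since $f(s)>0$ for $s>0$ this immediately gives $u\equiv 0$, settling this case.

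The substantial case is \eqref{S-LiouvParabHyp3}, where regular variation is not assumed and Theorem~\ref{thm1} does not directly apply; here I would run the BV98 argument by hand. First, using the lower bound $f(s)\ge s^q$ at infinity, the near-origin monotonicity of $f(s)/s^\sigma$, and the global monotonicity of $f(s)/s^p$ from \eqref{S-LiouvParabHyp1}, I would produce a power envelope $c_1 s^p\le f(s)\le c_2 s^p$ valid on the range of any bounded solution, and apply Theorem~\ref{thm1-0} to a suitably modified nonlinearity to obtain a universal pointwise bound on~$u$. Standard parabolic regularity then upgrades this to uniform $C^{2,1}$ bounds on every compact set. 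Next, starting from the Bochner identity
\[
\partial_t(|\nabla u|^2)-\Delta(|\nabla u|^2)=-2|D^2u|^2+2f'(u)|\nabla u|^2,
\]
I would multiply \eqref{S-LiouvBVeqnf} by $f(u)\xi^{2k}$ for a smooth cutoff $\xi$ supported in a parabolic cylinder $Q_R=B_R\times(-R^2,R^2)$, integrate by parts, and use Cauchy--Schwarz together with the monotonicity of $f(s)/s^p$ (playing the role of the scaling identity available in the pure power case) to obtain a schematic integral inequality of the form
\[
\int\!\!\!\int_{Q_R}\frac{f(u)^2}{u}\,\xi^{2k}\,dx\,dt\le\frac{C}{R^{2+\gamma}}\int\!\!\!\int_{Q_{2R}}(\text{lower-order terms})\,dx\,dt,
\]
with favorable power count precisely when $p<p_B$. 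Letting $R\to\infty$ then forces $f(u)\equiv 0$, hence $u\equiv 0$.

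The principal obstacle is the lack of homogeneity of $f$, which is exactly what makes the original BV98 power-counting clean. The role of each hypothesis is to supply just enough one-sided scaling control to keep the argument going: in Case~\eqref{S-LiouvParabHyp1b} this comes from Karamata-type bounds for slowly varying functions (Potter's inequality $f(\lambda s)/f(\lambda)\le C(s^{m+\eps}+s^{m-\eps})$), which are compatible with the subcritical gap $p<p_B$; in Case~\eqref{S-LiouvParabHyp3} it comes from the envelope $s^q\le f(s)\le C s^p$ together with the near-origin monotonicity of $f(s)/s^\sigma$. The delicate point is to propagate these substitutions rigorously through every integration-by-parts, every application of Cauchy--Schwarz and every test-function manipulation, while keeping track of the explicit dependence on the gap $p_B-p$ so that the remainder term remains summable as $R\to\infty$.
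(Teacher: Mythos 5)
Your decomposition into cases~\eqref{S-LiouvParabHyp1b} and \eqref{S-LiouvParabHyp3} is reasonable, and you correctly identify that the Bidaut-V\'eron integral method is at the core, but both branches as you describe them have essential gaps.

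The treatment of case~\eqref{S-LiouvParabHyp1b} is circular. You propose to conclude by applying Theorem~\ref{thm1} under hypothesis~\eqref{hypUB1a} with $D=Q$, observing (correctly) that the indices then lie in $(1,p_S)$. But the paper's proof of Theorem~\ref{thm1} under \eqref{hypUB1a} in the parabolic case is itself a reduction to Theorem~\ref{thm1c}, which requires the Liouville property~\eqref{HypLiouville}, and the paper explicitly supplies that property by citing Theorem~\ref{S-thmLiouvJLV2} -- the very statement you are trying to prove (``the latter will be independently proved in the next section''). There is no prior parabolic Liouville theorem available for general regularly varying $f$: producing one is the whole point of Theorem~\ref{S-thmLiouvJLV2}. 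So you cannot borrow it from Section~\ref{SecUB1}--\ref{SecUB2}; you must establish it directly.

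Case~\eqref{S-LiouvParabHyp3} also has problems. First, Theorem~\ref{thm1-0} requires regular variation of $f$ at infinity, which is not assumed under \eqref{S-LiouvParabHyp3}; a two-sided power envelope on a bounded interval does not give you that. The ``modify the nonlinearity above $\|u\|_\infty$'' device (cf.~Remark~\ref{remDecay}(ii)) presupposes $u$ is already known to be bounded, so your phrase ``valid on the range of any bounded solution'' hides exactly the a priori bound you still need. Second, your integral inequality targets $\int\!\!\int f(u)^2/u$, but the quantity the method actually controls is $L=\int\!\!\int\varphi\,f(v)\tilde f(v)$ with $\tilde f(s)=\int_0^s z^{-1}f(z)\,dz$, together with $I=\int\!\!\int\varphi\,v^{-2}|\nabla v|^4$; the function $\tilde f$ (and the integrability condition \eqref{S-LiouvParabHypInt}) are forced on you when you integrate the $J$-term of the Bidaut-V\'eron--V\'eron identity by parts, and they are not the same as $f(s)^2/s$ in general. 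Third, the power count does not run on $p<p_B$ alone: you need the growth hypothesis~\eqref{Qbddunbddnonc} on $f(s)$ relative to $s^{m_i}\tilde f^{2m_i-1}(s)$, from which Lemma~\ref{lemgrowth} extracts the bounds $f\tilde f\ge cs^{2\gamma_i}$ and $f\tilde f\ge c(F+\tilde F)^{d_i}$ that make the cutoff terms decay. Finally, for strong (rather than positive) solutions one must work with $v=u+\eps$ and carry the error terms $g_\eps=f(u)-f(u+\eps)$ through every estimate, as in Lemma~\ref{S-lemLiouvBV3}. What the paper actually does is encapsulate all this into the more general Theorem~\ref{S-thmLiouvJLV1} (hypotheses \eqref{S-LiouvParabHypInt}, \eqref{S-LiouvParabHypBV}, \eqref{Qbddunbddnonc}), and then the proof of Theorem~\ref{S-thmLiouvJLV2} consists precisely in verifying those hypotheses separately under~\eqref{S-LiouvParabHyp1b} (via Karamata-type bounds $c_\eps s^{b+\eps}\le f(s)\le C_\eps s^{b-\eps}$) and under~\eqref{S-LiouvParabHyp3} (via $\tilde f(s)\le\sigma^{-1}f(s)$ near $0$ and $\tilde f(s)\le C f(s)\log s$ at infinity). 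Your proposal contains none of this reduction, so it would not compile into a proof as written.
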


The result applies for instance to the case
$$f(s)=s^p\log^q(K+s)$$
with $1<p<p_B$ and $q,K$ as in assumption (ii) of Example 1 in Section~\ref{SubSecEx}.
Theorem~\ref{S-thmLiouvJLV2} is a special case of the following more general result
(but whose formulation is a bit less transparent).
Denote $\omega_1=(0,1]$, $\omega_2=(1,\infty)$ and set
\be{S-LiouvParabHypInt}
\tilde f(s)=\int_0^s z^{-1}f(z)\,dz
\quad\hbox{ (assuming }\int_0^1 z^{-1}f(z)\,ds<\infty).
\ee

\begin{thm} \label{S-thmLiouvJLV1}
Assume \eqref{hypcontpos}, \eqref{S-LiouvParabHypInt} and
\be{S-LiouvParabHypBV}
n=1\quad\hbox{ or }\quad f(s)\le p\tilde f(s),\ s>0, \ \hbox{ for some $p\in (1,p_B)$.}
\ee
In addition, for some ${2\over 3}<m_2<m_1<m^*:={2(n+2)\over 3n+2}$ and $c>0$,
we assume  
\be{Qbddunbddnonc}
f(s)\ge cs^{m_i} \tilde f^{2m_i-1}(s), 
\quad s\in \omega_i,\quad i\in\{1,2\}.
\ee 
Then equation \eqref{S-LiouvBVeqnf}
has no nontrivial strong solution.
\end{thm}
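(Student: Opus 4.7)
\medskip
\noindent\textbf{Proof proposal.}

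The plan is to adapt the parabolic Bidaut-V\'eron argument of \cite{BV98}. Given a nontrivial strong solution $u\ge0$ of \eqref{S-LiouvBVeqnf} on $Q=\R^n\times\R$, the starting point is the parabolic Bochner identity
\[
(\partial_t-\Delta)|\nabla u|^2+2|D^2u|^2=2f'(u)|\nabla u|^2,
\]
combined with the equation itself. In the pure power case $f(s)=s^p$, the BV98 approach multiplies these two identities by suitable powers $u^\alpha\varphi(x,t)$ (with $\varphi$ a cutoff on a parabolic cylinder $B_R\times(-R^2,R^2)$), integrates by parts, and uses the resulting Pohozaev-type identity together with a Sobolev-type interpolation. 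The exponent $p_B$ appears as the threshold at which the bookkeeping of the interpolation step closes.

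In our setting the role of the power $u^\alpha$ is played by a test function $g(u)$ built from the antiderivative $\tilde f$, exploiting the fundamental identities
\[
\nabla\tilde f(u)=\frac{f(u)}{u}\,\nabla u\AAand \tilde f(u)_t=\frac{f(u)}{u}\,u_t,
\]
which are the substitute for the scaling relations available in the power case. Concretely, I would test the equation and the Bochner identity against expressions of the form $\tilde f(u)^{\alpha-1}u^{-\beta}\varphi$, chosen so that the algebra of integration by parts mimics that of BV98 with $\tilde f(u)$ in place of $u^p/p$. Assumption \eqref{S-LiouvParabHypBV}, in the form $f\le p\tilde f$ with $p<p_B$, is exactly what makes the ``good'' quadratic terms in $|D^2u|^2$ and $|\nabla u|^2$ dominate the ``bad'' terms arising from $f'(u)|\nabla u|^2$ (since $f'$ is not accessible and must be traded via $\int f'(u)|\nabla u|^2\phi$-type terms integrated by parts against $f(u)$). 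In the one-dimensional case this monotonicity-type bound on $f/\tilde f$ is not needed, exactly as in \cite{BV98}.

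Having produced a master inequality of the form
\[
\iint \bigl(|D^2u|^2+|\nabla u|^2\bigr)g(u)\varphi\,dx\,dt
\le C\iint \tilde f(u)^{2\alpha-1}u^\gamma\bigl(|\nabla\varphi|^2+|\varphi_t|+|\Delta\varphi|\bigr)\,dx\,dt,
\]
I would then pair it with the lower bound \eqref{Qbddunbddnonc}, applied separately on the sublevel set $\{u\le1\}$ and superlevel set $\{u>1\}$ (hence the two exponents $m_1,m_2$), in order to bound the right-hand side by itself raised to a power strictly less than one, after a H\"older step calibrated to the cutoff $\varphi$. The constraint $\tfrac23<m_i<m^*=\tfrac{2(n+2)}{3n+2}$ is precisely what ensures that this H\"older exponent is admissible and that the cutoff factors produce a negative power of $R$ upon taking $\varphi(x,t)=\eta(|x|/R)\eta(|t|/R^2)$ for a standard $\eta\in C_c^\infty$. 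Letting $R\to\infty$ then forces $\nabla u\equiv0$ and in turn $u_t=f(u)$ with $u=u(t)$ defined on all of $\R$; since $f>0$ on $(0,\infty)$ by \eqref{hypcontpos}, no such solution exists but $u\equiv0$, contradicting nontriviality.

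The main obstacle will be the bookkeeping in step two: without the homogeneity of $u^p$, the integration by parts produces a host of terms involving $f(u)$, $f'(u)$, $\tilde f(u)$, $u$ and their ratios, and closing the identity requires grouping them using only the allowed algebraic relation $f(u)=u\,\tilde f'(u)$ together with the structural inequality $f\le p\tilde f$. Deducing Theorem~\ref{S-thmLiouvJLV2} from Theorem~\ref{S-thmLiouvJLV1} is then a matter of checking that \eqref{S-LiouvParabHypBV} and \eqref{Qbddunbddnonc} follow from regular variation of $f$ at $0$ and $\infty$ with indices in $(1,p_B)$, respectively from \eqref{S-LiouvParabHyp3}, by choosing $m_1,m_2$ close to $2p/(2p+1)$ and using the preliminary properties of regularly varying functions announced in Section~\ref{SubSecPrelim}.
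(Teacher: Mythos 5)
Your strategy -- adapt the Bidaut--V\'eron/Gidas--Spruck integral method using the primitive $\tilde f(s)=\int_0^s z^{-1}f(z)\,dz$ in the place of $u^p/p$, then feed the hypothesis \eqref{Qbddunbddnonc} separately on $\{u\le 1\}$ and $\{u>1\}$ into the cutoff bookkeeping -- is exactly the one the paper follows. But there are two genuine gaps in the way you propose to close it.

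First, the paper never aims at the Hessian quantity $\iint(|D^2u|^2+|\nabla u|^2)g(u)\varphi$ nor concludes via $\nabla u\equiv0$ plus an ODE argument. The central positive quantity that must be forced to zero is
\[
L=\iint\varphi\, f(v)\tilde f(v),
\]
together with the quartic gradient term $I=\iint\varphi\, v^{-2}|\nabla v|^4$ (see Lemma~\ref{S-lemLiouvBV3}). The term $L$ is what assumption \eqref{Qbddunbddnonc} is designed to exploit: via Lemma~\ref{lemgrowth} it produces lower bounds $f(v)\tilde f(v)\ge c v^{2\gamma_i}$ and $f(v)\tilde f(v)\ge c(F+\tilde F)^{d_i}(v)$ which are used to absorb, by Young's inequality, all the cutoff terms $R^{-2}\varphi^a$ and $R^{-1}\varphi^a$ on the right-hand side. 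Your ``master inequality'' has $\tilde f(u)^{2\alpha-1}u^\gamma$ on the right and $(|D^2u|^2+|\nabla u|^2)g(u)$ on the left, and it is not clear how to absorb one into the other without first putting $f\tilde f$ on the left. Your endgame also needs $\nabla u\equiv 0$ followed by nonexistence of entire ODE solutions, which requires an extra use of superlinearity and some care with non-Lipschitz $f$ near $0$; the paper's route $\iint_{Q_{R/2}}f(u)\tilde f(u)\le CR^{-\bar\theta}\to 0$ gives $u\equiv 0$ directly.

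Second, and this is spelled out in Remark~\ref{remproofeps}, for non-power $f$ one cannot reduce to strictly positive solutions by the maximum principle the way one does for $f(s)=s^p$, and the Bochner/integral inequality of Lemma~\ref{S-lemLiouvGS1} is applied to $v$ with negative powers $v^{-1}$ and $v^{-2}$. So the paper works throughout with $v=u+\eps$, tracks the perturbation $g_\eps=f(u)-f(u+\eps)$ in all terms, and only at the very end (Step 4) sends $\eps\to 0$ for fixed $R$ using local boundedness of $u$ and continuity of $f,\tilde f$. Your proposal never introduces this regularization; without it the quantities $v^{-1}|\nabla v|^2$, $v^{-2}|\nabla v|^4$, and $\tilde f(v)$-type integrations by parts are not justified where $u=0$. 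Finally, a minor point: the paper does not weight the test function by $\tilde f(u)^{\alpha-1}u^{-\beta}$; it takes $q=0$ in Lemma~\ref{S-lemLiouvGS1} (so the weight is just $\varphi$), and $\tilde f$ enters only because $\nabla\tilde f(v)=v^{-1}f(v)\nabla v$ lets one integrate $J$ by parts, which is a cleaner algebraic path than your proposed weighting.
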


\begin{rem} (i) {\bf One-dimensional case.}
For $n=1$, a related Liouville type theorem is proved in \cite{P20}. Namely, \eqref{S-LiouvBVeqnf} has no {\it bounded} positive solutions,
whenever $f$ is locally Lipschitz continuous on $[0,\infty)$ and positive for $s>0$.
Theorem~\ref{S-thmLiouvJLV2} for $n=1$ shows that the boundedness and local Lipschitz assumptions can be removed if 
$f$ instead satisfies assumption \eqref{S-LiouvParabHyp1b} or \eqref{S-LiouvParabHyp3}.
The superlinearity assumption at $\infty$ in \eqref{S-LiouvParabHyp3} is more or less optimal since,
as noted in \cite{P20}, (unbounded) positive solutions do exist whenever $f$ is globally Lipschitz,
given by solutions of the ODE $y'(t)=f(y(t))$ with $y(0)>0$.

The proof in \cite{P20} is completely different, based on zero number 
(and the result remains true for sign-changing solutions such that the zero number of $u(\cdot,t)$ stays bounded).

\smallskip

(ii) The nonexistence of nontrivial solutions of \eqref{S-LiouvBVeqnf}
in the special case $f(s)=s^p+s^2$ with $1<p<p_B$ and $n\le 5$ has been proved in \cite{JWY}
(where a class of systems is actually studied), by using the approach from \cite{BV98}. However, we note that the nonlinearity is asymptotic to a power.
\end{rem}

\begin{rem} \label{remLiouvParab}
(i) Assumption \eqref{Qbddunbddnonc} is sharp. Indeed, for $f(s)=s$ (hence $\tilde f(s)=s$),
\eqref{Qbddunbddnonc} with $i=2$ is satisfied only for $m_2\le 2/3$ and \eqref{S-LiouvBVeqnf} does admit a nontrivial entire solution,
given by $u(x,t)=e^t$. And for $f(s)=s^{p_S}$ (hence $\tilde f(s)=cs^{p_S}$),
\eqref{Qbddunbddnonc} with $i=1$ is satisfied only for $m_2\ge m^*$ and \eqref{S-LiouvBVeqnf}
admits a positive stationary solution.
However, we do not know (nor expect) the condition $p<p_B$ in \eqref{S-LiouvParabHypBV} to be sharp (see after Theorem~D).
\smallskip

(ii) It should be possible to take $\sigma=0$ in assumption~\eqref{S-LiouvParabHyp3}, 
by suitable modifications of the proof of Theorem~\ref{S-thmLiouvJLV1}
(taking $q<0$ small instead of $q=0$ in \eqref{S-LiouvGSestIJK0} 
and redefining $\tilde f(s)$  accordingly by $\int_0^s z^{-1-q}f(z)\,dz$).
We have refrained to develop this to avoid additional technical complication
in the statement and proof of Theorem~\ref{S-thmLiouvJLV1}.
\smallskip

(iii) Note that the second part of \eqref{S-LiouvParabHypBV} 
is equivalent to $\tilde f(s)/s^p$ being nonincreasing for $s>0$. Here is an example of nonlinearity
satisfying the assumptions of Theorem~\ref{S-thmLiouvJLV1} but not
assumption \eqref{S-LiouvParabHyp1} of Theorem~\ref{S-thmLiouvJLV2} {\cb for $n\ge 2$}:
\be{counterexftilde}
f(s)=\begin{cases}
s^m+s^q,&\hbox{ if $s\in[0,a]$} \\
\noalign{\vskip 1mm}
(1+a^{q-m})s^m,&\hbox{ if $s\ge a$,}
\end{cases}
\ee
with $1<m<p_B<q$, for suitably chosen $a>0$ (see~Lemma~\ref{lemEx2}).
\end{rem}

\section{Stability of Liouville properties and of universal estimates} \label{SecStabil}

It is known that the Liouville property may be rather sensitive to the nonlinearity.
An assumption on the leading order at $0$ and/or infinity is usually not sufficient (except in the lower range $p\le p_*$, 
cf.~Lemma~\ref{lemGidFuj}) and oscillations in the nonlinearity
may be responsible for the existence of entire solutions.
See for instance the counter-example in Remark~\ref{remDecay}(i).
In this section we shall nevertheless show that 
Liouville properties and universal estimates are stable with respect to the nonlinearity 
in a certain sense.

\smallskip

Consider a family $\mathcal{F}=\{f_\eps,\ \eps\in(0,\eps_{\cb 1})\}$ of continuous 
functions $f_\eps:[0,\infty)\to[0,\infty)$, such that
\be{hypfeps}
f_\eps(s)>0,\quad s>0.
\ee
We assume that the limits below exist and are finite:
\be{hypgeps}
g(s):=\lim_{\eps\to 0}f_\eps(s),\quad\hbox{ uniformly for $s$ in compact subsets of $(0,\infty)$,}
\ee
\be{hypphieps}
\phi(s):= \lim_{\eps, \lambda\to 0}{f_\eps(\lambda s)\over f_\eps(\lambda)},
\quad\hbox{\cb uniformly for $s$ in compact subsets of $(0,\infty)$,}
\ee
\be{hyppsieps}
\psi(s):=\lim_{\eps\to 0,\, \lambda\to \infty}{f_\eps(\lambda s)\over f_\eps(\lambda)},
\quad\hbox{\cb uniformly for $s$ in compact subsets of $(0,\infty)$,}
\ee
We also assume the uniform superlinearity condition:
\be{hypsuperlineps}
\inf_{\eps\in(0,\eps_{\cb 1}),\,\lambda>0}\ {f_\eps(\lambda s)\over sf_\eps(\lambda )}\to \infty, \quad s \to \infty.
\ee
We have the following result
 (which {\cb will imply Theorem~\ref{thm1}} under assumption \eqref{hypUB1c} as a special case).

\begin{thm} \label{thm2eps}
Assume \eqref{Lell} or \eqref{Lparab}, and let \eqref{hypfeps}-\eqref{hypsuperlineps} be satisfed. Suppose in addition that the problem
$$\mathcal{L}w=h(w),\quad x\in Q$$
with $h\in\{g, \phi, \psi\}$  does not possess any nontrivial bounded solution.
{\cb There} exist positive constants $\eps_0=\eps_0(n,\mathcal{F})$ and $C=C(n,\mathcal{F})$ such that, for any 
$\eps\in(0,\eps_0)$ and any strong solution $u$ of
\be{eqnF}
\mathcal{L}u=f_\eps(u)\quad\hbox{in $D$,}
\ee 
we have the estimate
\be{UBfeps}
{f_\eps(u(X))\over u(X)}\le {C(n,\mathcal{F})\over {\rm dist}^2(x,\partial D)},\quad X\in D.
\ee 
In particular, problem \eqref{eqnF} with $D=Q$
does not admit any nontrivial  solution for $\eps\in(0,\eps_0)$.
\end{thm}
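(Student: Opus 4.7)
I would argue by contradiction using the doubling-rescaling method of \cite{PQS1,PQS2}, combined with a diagonal extraction in the parameter $\eps$ and in the ``pivot value'' of the rescaled solution, so as to land on one of the three limiting nonlinearities $g,\phi,\psi$. If the conclusion failed, then there would exist sequences $\eps_k\to 0$, domains $D_k\subset Q$, strong solutions $u_k$ of $\mathcal{L}u_k=f_{\eps_k}(u_k)$ on $D_k$, and points $X_k\in D_k$ with
\[
M_k(X_k)\,d(X_k,\partial D_k)\to\infty,\qquad M_k(X):=\bigl(f_{\eps_k}(u_k(X))/u_k(X)\bigr)^{1/2}.
\]
The doubling lemma applied to $M_k$ provides new centers $Y_k\in D_k$ with $\mu_k:=M_k(Y_k)\to\infty$, $\mu_k\,d(Y_k,\partial D_k)\to\infty$, and $M_k\le 2\mu_k$ on the $d$-ball of radius $1/\mu_k$ about $Y_k$.

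Setting $\lambda_k:=u_k(Y_k)$ and rescaling
\[
v_k(X):=\lambda_k^{-1}u_k(Y_k+X/\mu_k)\qquad\bigl(\text{with time scaling by }\mu_k^{-2}\text{ in case }\eqref{Lparab}\bigr),
\]
one obtains $v_k(0)=1$ and $\mathcal{L}v_k=\tilde f_k(v_k)$ on a rescaled domain exhausting $Q$, with
\[
\tilde f_k(s):=f_{\eps_k}(\lambda_k s)/f_{\eps_k}(\lambda_k).
\]
The doubling inequality translates into $\tilde f_k(v_k)/v_k\le 4$, and the uniform superlinearity \eqref{hypsuperlineps} forces $v_k$ to remain bounded on each compact. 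Extracting a subsequence, we may assume $\lambda_k\to\bar\lambda\in[0,\infty]$; applied separately to the numerator and denominator, hypotheses \eqref{hypgeps}--\eqref{hyppsieps} give $\tilde f_k\to h$ locally uniformly on $(0,\infty)$, with $h=\phi$ if $\bar\lambda=0$, $h=\psi$ if $\bar\lambda=\infty$, and $h(s)=g(\bar\lambda s)/g(\bar\lambda)$ if $\bar\lambda\in(0,\infty)$. Interior elliptic/parabolic regularity then yields a nontrivial bounded strong solution $v_\infty$ of $\mathcal{L}v_\infty=h(v_\infty)$ on $Q$, nontriviality coming from $v_\infty(0)=1$.

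When $\bar\lambda\in\{0,\infty\}$ this directly contradicts the Liouville hypothesis. When $\bar\lambda\in(0,\infty)$, the affine change of unknowns $w(X):=\bar\lambda\,v_\infty(\alpha X)$ with $\alpha^2:=g(\bar\lambda)/\bar\lambda$ (and the parabolic time scaling $t\mapsto\alpha^2 t$ in case \eqref{Lparab}) converts $v_\infty$ into a nontrivial bounded solution of $\mathcal{L}w=g(w)$ on $Q$, again contradicting the hypothesis. The main obstacle is to secure convergence strong enough to pass to the limit in $\tilde f_k(v_k)$: one needs that $v_k\to v_\infty$ locally uniformly and that $v_\infty$ does not degenerate to $0$, so that on a neighborhood of each $X$ the values $v_k(X)$ lie in a fixed compact subset of $(0,\infty)$ where the locally uniform convergence $\tilde f_k\to h$ in \eqref{hypgeps}--\eqref{hyppsieps} applies. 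The required non-degeneracy $v_k\ge c>0$ on compact neighborhoods of the origin follows from the normalization $v_k(0)=1$ combined with a Harnack-type estimate for the linear equation $\mathcal{L}v_k-b_k(X)v_k=0$ with $b_k:=\tilde f_k(v_k)/v_k\in[0,4]$. This is precisely where the uniformity (as opposed to mere pointwise convergence) built into the limits \eqref{hypgeps}--\eqref{hyppsieps} is indispensable.
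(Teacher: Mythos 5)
Your proposal follows the same architecture as the paper: the doubling lemma applied to $M_k=(f_{\eps_k}(u_k)/u_k)^{1/2}$, rescaling by the pivot value $\lambda_k=u_k(Y_k)$ and the doubling length $\mu_k^{-1}$, local boundedness of $v_k$ via the uniform superlinearity \eqref{hypsuperlineps}, the trichotomy $\lambda_k\to 0$, $\lambda_k\to\infty$, $\lambda_k\to a\in(0,\infty)$ identifying the limit nonlinearity with $\phi$, $\psi$, or a rescaled version of $g$, and the final Liouville contradiction. Up to this skeleton, the proposal matches the proof of the paper (which is a minor variant of the proof of Theorem~\ref{thm1c}).

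There is, however, a genuine gap in the step ``interior regularity then yields a nontrivial bounded solution of $\mathcal{L}v_\infty=h(v_\infty)$ on $Q$.'' To pass to the limit in $\tilde f_k(v_k)$ one can only use the locally uniform convergence of \eqref{hypgeps}--\eqref{hyppsieps} at points where the values $v_k$ stay in a fixed compact of $(0,\infty)$, i.e.\ where $v_\infty$ is bounded away from $0$. You identify this as the main obstacle, but your proposed fix --- a Harnack inequality giving $v_k\ge c>0$ on compact neighborhoods of the origin --- does not close it in the parabolic case: the parabolic Harnack inequality propagates lower bounds only \emph{forward} in time, so from $v_k(0,0)=1$ you obtain no lower bound for $t<0$, and the limit $v_\infty$ may well vanish identically on a backward half-slab $\Rn\times(-\infty,\tau_0]$. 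In that situation the limiting equation is only available on $\{t>\tau_0\}$, not on all of $Q$, and the Liouville hypothesis on $Q$ cannot be invoked directly. The paper circumvents this by first taking the limit using only the sign $\mathcal{L}w\ge 0$ (which needs no identification of the nonlinearity at all), then applying the strong maximum principle to show the null set of $w$ is exactly a past half-slab $\Rn\times(-\infty,\tau_0]$ with $\tau_0<0$, deriving the limiting equation $\mathcal{L}w=h(w)$ on $\Rn\times(\tau_0,\infty)$ where $w>0$, and finally ruling out $\tau_0>-\infty$ by a linear comparison (since $w\le A$ gives $\mathcal{L}w\le Cw$ with $w(\cdot,\tau_0)\equiv 0$, forcing $w\equiv 0$). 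Your argument would need to be completed along these lines; as written, the final reduction to the Liouville hypothesis does not go through.
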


\medskip

\begin{rem}
(i) It follows from \eqref{hypgeps}-\eqref{hyppsieps} that, for each $s>0$,
$\lim_{\lambda\to 0}{g(\lambda s)\over g(\lambda)}=\phi(s)$ and $\lim_{\lambda\to\infty}{g(\lambda s)\over g(\lambda)}=\psi(s)$,
i.e.~$g$ has regular variation at $0$ and $\infty$ and
consequently (cf.~after~\eqref{limitg3}),
$\phi,\psi$ must necessarily be power functions $s^m$, with $m>1$ in view of \eqref{hypsuperlineps}.

\smallskip

(ii) The proof shows that the conclusions of Theorem~\ref{thm2eps} remain valid for any strong solution~of 
$$(1-\eta)f_\eps(u)\le\mathcal{L}u\le (1+\eta)f_\eps(u),\quad x\in D,$$
where $\eta>0$ small depends only on $n$ and $\mathcal{F}$.
See also \cite{WY, FIM} and \cite[p.174]{QSb} for related results on stability for the Liouville property.
\end{rem}

{\cb Finally, we have the following uniform version of Theorem~\ref{thm1-0},
assuming regular variation of $f_\eps$ only at infinity.

\begin{thm} \label{thm2epsB}
Assume \eqref{Lell} or \eqref{Lparab}. Let $\mathcal{F}$ satisfy \eqref{hypfeps} and,
for some $m\in(1,p_S)$,
\be{hyppsiepsB}
\lim_{\eps\to 0,\, \lambda\to \infty}{f_\eps(\lambda s)\over f_\eps(\lambda)}=s^m,
\quad\hbox{uniformly for $s$ in compact subsets of $(0,\infty)$.}
\ee
In addition assume
\be{hypfunifbdd}
\hbox{$\ds\sup_{\eps\in(0,\eps_1),\, s\in [0,A]} f_\eps(s)<\infty\ \ $ for each $A>0$,}
\ee
and the superlinearity condition
\be{hypsuperlineps2}
\inf_{\eps\in(0,\eps_1),\,\lambda\ge 1}\ {f_\eps(\lambda s)\over sf_\eps(\lambda )}\to \infty, \quad s \to \infty.
\ee
\smallskip
Then there exists a positive constant $\eps_0=\eps_0(n,\mathcal{F})$ such that, for any 
$\eps\in(0,\eps_0)$, assertion~(i) (resp.,~(ii)) of Theorem~\ref{thm1-0} remains valid with $f$ replaced by $f_\eps$
and $C$ replaced by $C(n,\mathcal{F})$ (resp.,~$C(n,\Omega,\mathcal{F})$).
\end{thm}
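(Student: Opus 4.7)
The plan is to prove Theorem~\ref{thm2epsB} by running a contradiction-and-rescaling argument uniform in $\eps$, in the spirit of the proof of Theorem~\ref{thm1-0} but where the role of the limiting nonlinearity is now played by the pure power $s^m$ furnished by hypothesis~\eqref{hyppsiepsB}. Thus I would assume the conclusion fails: there are sequences $\eps_k\to 0$, strong solutions $u_k$ of $\mathcal{L}u_k=f_{\eps_k}(u_k)$ on $D_k$, and points $X_k\in D_k$ with $u_k(X_k)\ge 1$ and $M_k^2(X_k)\ge k\bigl(1+d^{-2}(X_k,\partial D_k)\bigr)$, where $M_k(X):=[f_{\eps_k}(u_k(X))/u_k(X)]^{1/2}$. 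A useful first observation is that $\mu_k:=u_k(X_k)\to\infty$: otherwise $\mu_k\in[1,A]$ for some $A$, and \eqref{hypfunifbdd} would force $f_{\eps_k}(\mu_k)/\mu_k=M_k^2(X_k)$ to stay bounded, a contradiction.

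Next I would apply the doubling lemma of \cite{PQS1, PQS2} to $M_k$, producing $Y_k\in D_k$ with $M_k(Y_k)\ge M_k(X_k)$, $d(Y_k,\partial D_k)M_k(Y_k)\to\infty$, and $M_k\le 2M_k(Y_k)$ on the $d$-ball $B(Y_k,k/M_k(Y_k))$. Setting $\lambda_k:=u_k(Y_k)$ and $r_k:=1/M_k(Y_k)$, I would rescale
\[
v_k(Z):=\lambda_k^{-1}u_k(Y_k+r_k Z),
\]
interpreted parabolically (with time increment $r_k^2\tau$) in case~\eqref{Lparab}. Then $v_k(0)=1$ and $\mathcal{L}v_k=g_k(v_k)$ with $g_k(s):=f_{\eps_k}(\lambda_k s)/f_{\eps_k}(\lambda_k)$. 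The same reasoning shows $\lambda_k\to\infty$, and the doubling bound rewrites as $g_k(v_k(Z))\le 4v_k(Z)$ on the rescaled ball $\{|Z|\le k\}$; combined with the uniform superlinearity \eqref{hypsuperlineps2} (applied with $\lambda=\lambda_k\ge 1$), this forces $v_k$ to be bounded uniformly in $k$ on every compact set. Hypothesis \eqref{hyppsiepsB} then yields $g_k(s)\to s^m$ uniformly on compact subsets of $(0,\infty)$.

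Interior $L^p$ and Schauder estimates applied to the equation $\mathcal{L}v_k=g_k(v_k)$ give a subsequential $C^2_{\rm loc}$ (resp.\ $C^{2,1}_{\rm loc}$) limit $v\ge 0$ with $v(0)=1$, solving $\mathcal{L}v=v^m$ on all of $Q$, the rescaled domains exhausting $Q$ because $d(Y_k,\partial D_k)M_k(Y_k)\to\infty$. As $1<m<p_S$, Theorem~A (resp.~B) forces $v\equiv 0$, contradicting $v(0)=1$. For assertion~(ii), I would run the same scheme and distinguish two cases according to whether ${\rm dist}(X_k,\partial\Omega)M_k(X_k)\to\infty$, in which case the above $Q$-contradiction applies, or this quantity stays bounded, in which case, after flattening $\partial\Omega$ using its uniform $C^2$ regularity, the rescaling produces a bounded nontrivial nonnegative solution of $\mathcal{L}v=v^m$ on a half-space with zero Dirichlet data, which is again ruled out by the standard half-space Liouville theorem in the pure power case. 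The main technical obstacle I anticipate is passing to the limit in $g_k(v_k)$ at points where $v_k\to 0$: the convergence \eqref{hyppsiepsB} is uniform on compacts of $(0,\infty)$ but a priori not at $s=0$, so to identify the limit equation one must either exploit strict positivity of $v$ via the strong maximum principle combined with $v(0)=1$, or supplement the passage to the limit by a separate continuity argument using \eqref{hypfunifbdd} to control $g_k$ near the origin.
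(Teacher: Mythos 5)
Your proposal follows the same broad strategy as the paper (contradiction, doubling, rescaling, passage to the pure-power Liouville theorem), but there is a genuine technical gap in the choice of the normalizing quantity $M_k$, and it is precisely the point that the paper's proof of Theorem~\ref{thm1-0} is engineered to avoid.

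You set $M_k(X)=[f_{\eps_k}(u_k(X))/u_k(X)]^{1/2}$. This creates two problems. First, $M_k$ need not be bounded on compact subsets of $D_k$: where $u_k$ is close to $0$, the ratio $f_{\eps_k}(u_k)/u_k$ is uncontrolled (the hypotheses only give $f_\eps>0$ on $(0,\infty)$, so $f_\eps(s)/s$ may blow up as $s\to 0$), and then Lemma~\ref{lemDoubling} is not applicable on $D_k$. If instead you restrict the doubling step to $D_+=\{u_k\ge 1\}$, the contradiction hypothesis $M_k(X_k)\,d(X_k,\partial D_k)>2k$ no longer implies the needed $M_k(X_k)\,d(X_k,\Gamma)>2k$, since $\Gamma=\overline{D}_k\setminus D_+$ contains the set $\{u_k<1\}$, which may lie closer to $X_k$ than $\partial D_k$ does. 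Second, and independently of the first issue: after doubling you assert that ``the same reasoning shows $\lambda_k\to\infty$.'' This is not justified. You only know $f_{\eps_k}(\lambda_k)/\lambda_k\ge M_k^2(X_k)\to\infty$, and in contrast with the original point where you imposed $u_k(X_k)\ge 1$, there is no a priori lower bound on $\lambda_k=u_k(Y_k)$: a subsequence with $\lambda_k\to 0$ is consistent with $f_{\eps_k}(\lambda_k)/\lambda_k\to\infty$ and with the uniform bound \eqref{hypfunifbdd}. Without $\lambda_k\to\infty$ (in particular without $\lambda_k\ge 1$), the uniform superlinearity \eqref{hypsuperlineps2} is not available to force $v_k$ bounded, and \eqref{hyppsiepsB} cannot be invoked to identify the limit nonlinearity.

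Both issues are resolved by using, as in the proof of Theorem~\ref{thm1-0}, the normalization $M_k(X)=[f_{\eps_k}(u_k(X))/(1+u_k(X))]^{1/2}$ (the $\sigma=1$ case of \eqref{DefMcases}). This $M_k$ is continuous, hence locally bounded, on all of $D_k$, so doubling applies on $D_k$ directly with $\Gamma=\partial D_k$. Moreover the doubling output satisfies $f_{\eps_k}(\lambda_k)\ge 4k^2(1+\lambda_k)\ge 4k^2\to\infty$, and then \eqref{hypfunifbdd} forces $\lambda_k\to\infty$, which is exactly the step the paper singles out. The price is a harmless extra term $(1+s^{-1})$ in the bound on $g_k(v_k)$ (see \eqref{DoublingA2b}), which does not affect the uniform boundedness of $v_k$. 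Once this normalization is adopted, the remaining steps you outline (uniform bound, interior estimates, strong maximum principle to handle vanishing of the limit, Theorems~A/B in the whole space and the half-space Liouville theorem from \cite{PQS2}/\cite{GSb} for case (ii)) match the paper's argument.
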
 
 }

\begin{rem}
 {\bf Known and new elliptic Liouville type theorems.}
We point out that weaker versions of Theorem~D 
(with assumptions on $f$ stronger than \eqref{condLZ})
were obtained prior to \cite{LZ} in \cite{GSa,RZ}.
On the other hand, Liouville type properties for \eqref{eqf} are known to be true under different assumptions
that cannot be compared with \eqref{condLZ}.
Namely, let $p>1$ satisfy $p<p_S$ if $n\le 4$ and $p<(n-1)/(n-3)$ ($<p_S$) if $n\ge 5$.
Assume that 
\be{condNHM1}
2nF(s)-(n-2)sf(s)\ge c_3 s^{p+1}, \quad s>0,\quad\hbox{where } F(s)=\int_0^s f(z)\, dz,
\ee
and that
\be{condNHM2}
c_1s^p\le f(s)\le c_2s^p,\quad s>0,
\ee
for some $c_1,c_2,c_3>0$.
Then, as a consequence of \cite[Theorem~2]{S12}, \eqref{eqf} does not admit any nontrivial bounded solution in $\R^n$
(or even exponentially bounded solutions).

As mentioned before, the proof of Theorem~A (\cite{GSa, BVV}) is based on integral estimates and Bochner's formula,
whereas Theorem~D (cf.~\cite{LZ} and see also \cite{CLi,RZ}) is proved by moving planes or moving spheres methods.
As for the proof in \cite{S12} it is based on Pohozaev's identity, functional inequalities on $S^{n-1}$
and measure arguments.\footnote{Although, as a drawback, it reaches the optimal Sobolev exponent only in dimensions $n\le 4$,
it applies well to certain elliptic systems for which other methods are not always applicable
(see \cite{SZ, S09, S12}).}

On the other hand, Theorem~\ref{thm2eps} (or Theorem~\ref{thm1} under assumption \eqref{hypUB1c})  
produces new Liouville results for strongly oscillating nonlinearities,
that do not follow from known ones.
For instance, it applies for 
$$f(s)=s^{p+a\sin[\log\log(3+s)]}e^{a\sin s}
\quad\hbox{or}\quad
f(s)=s^p\log^a(3+s+\sin(s^2))$$
with $p\in (1,p_S)$ and small $a\in\R$.
Elementary calculations show that $sf'/f$ is unbounded,
whatever the value of $a\ne 0$, so that in particular $s^{-p_S}f(s)$ is not nonincreasing 
and Theorem~D does not apply (and \eqref{condNHM2} does not hold either).
 \end{rem}

\section{Ideas of proofs: Heuristics for generalized scaling arguments} \label{SecHeurist}

The following motivates our assumptions on $f$
and can also serve as a guideline to the proof.
Let us consider the elliptic case for simplicity (the parabolic case is similar).
The principle of Liouville theorems based scaling methods is to look for a rescaled solution of the form
 $$v_k(y)=m_k^{-1}u_k(x_k+\alpha_ky),$$
 where $x_k$ is a suitable point (where the sought-for bound is assumed to be violated for contradiction).
Here $m_k$ and $\alpha_k$ are the rescaling parameters, and the function $v_k$ solves
$$-\Delta v_k(y)=g_k(v_k(y)):=\alpha_k^2m_k^{-1}f(m_k v_k(y)).$$
When $f$ is a power (or asymptotically power-like) function, a natural relation between $m_k$ and $\alpha_k$
is immediately provided by the scale invariance.
However for general $f$ this is no longer the case and $m_k, \alpha_k$ a priori offer two degrees of freedom.
To pass to the limit and end up with a Liouville theorem for a suitable equation, we need that:
 
\vskip 1pt

$\bullet$ $v_k$ stays locally bounded (and does not vanish identically),

\vskip 1pt

$\bullet$ $g_k(s)$ converges to some limit $g(s)>0$ for each $s>0$.
\vskip 1pt

\noindent As usual, the nonvanishing constraint leads to the choice $m_k=u_k(x_k)$ (up to an unimportant multiplicative constant),
and the doubling lemma \cite{PQS1} will then provide the local boundedness.
Next, the second constraint requires $\alpha_k$ to be such that
\be{limitg}
g(s):= \lim_k \, \alpha_k^2m_k^{-1}f(m_ks) \ \hbox{ exists for all $s>0$}.
\ee
Since the case when $m_k$ stays bounded and bounded away from $0$ is not expected to pose significant problems,
we must essentially deal with the cases $m_k\to\infty$ or $0$.
Since \eqref{limitg} must hold in particular with $s=1$, this imposes
\be{limitg2}
\alpha_k\sim\Bigl(\frac{m_k}{f(m_k)}\Bigr)^{1/2}
\ee
(again up to a multiplicative constant). Returning to \eqref{limitg}, it follows that
the limit of $\frac{f(m_ks)}{f(m_k)}$ should exist for all $s>0$ and, since the a priori unknown
sequence $m_k\to 0$ or $\infty$ can be arbitrary, we thus need that
\be{limitg3}
g_\pm(s)=\lim_{\lambda \to \infty \atop ({\rm resp.,}\, \lambda \to0)} \, \frac{f(\lambda s)}{f(\lambda)} >0\ 
\hbox{ exists for all $s>0$}.
\ee
But it is known (see,~e.g.,~\cite{Se}) that functions $f$ with such property are exactly the functions with regular variation
and that the limits must necessarily be power functions $s^q$. 

Now, according to whether $m_k$ goes to $0$ or $\infty$, or remains bounded (along a subsequence),
and after passing to the limit by elliptic estimates, we end up with a nontrivial bounded entire solution $v$
of $-\Delta v=v^q$, or of $-\Delta v=f(v)$,
and we reach a contradiction with the Liouville property either for $f$ or for a power.
Note that in view of the choice \eqref{limitg2} and of the superlinearity of $f$, 
the natural quantity to be estimated turns out to be $f(u)/u$ rather than $u$ in the usual case.

Alternatively, one can avoid assuming
 the existence of the limit in \eqref{limitg} if one can guarantee a precise {\it lower bound} on 
$g_k(v_k)$ (as well as some upper bound to ensure compactness),
so as to end up with a Liouville theorem for supersolutions instead of solutions.
This leads to functions with controled variation and also enjoying the almost decreasing property
 (cf.~\eqref{hypUB1b}).

As for power functions multiplied by a function with controled variation to a small exponent (cf.~\eqref{hypUB1c}),
they arise when combining the above approach with a limiting argument.

\section{Proof of Theorems~\ref{thm1-0}-{\cb\ref{thm1c}} and \ref{thm2eps}{\cb-\ref{thm2epsB}}}  \label{SecProofUB}

\subsection{Preliminaries} \label{SubSecPrelim}

We first recall the doubling lemma from \cite{PQS1}.

\begin{lem} \label{lemDoubling}
Let $({\cb E},d)$ be a complete metric space and let
$\emptyset\ne D\subset\Sigma\subset {\cb E}$, with $\Sigma$ closed. Set 
$\Gamma=\Sigma\setminus D$.
Let $M:D\to(0,\infty)$ be bounded on compact subsets of $D$ 
and fix a real $k>0$.
If  $y\in D$ is such that
$$M(y)\,{\rm dist}(y,\Gamma)>2k,
$$
then there exists $x\in D$ such that
$$
M(x)\,{\rm dist}(x,\Gamma)>2k,\qquad M(x)\geq M(y),
$$
and
$$M(z)\leq 2M(x)\quad\hbox{ for all $z\in D\cap\overline
B_{\cb E}\bigl(x,k\,M^{-1}(x)\bigr)$}.$$
\end{lem}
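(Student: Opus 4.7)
My plan is the standard iterative doubling argument. Set $x_0=y$. If the conclusion of the lemma already holds with $x=x_0$, we are done (note $M(x_0)=M(y)\ge M(y)$ and the hypothesis on $y$ gives $M(x_0)\,\mathrm{dist}(x_0,\Gamma)>2k$). Otherwise, negating the conclusion, there exists $x_1\in D\cap \overline B_E(x_0,kM^{-1}(x_0))$ with $M(x_1)>2M(x_0)$. Iterate: at step $j$, if the conclusion fails at $x_j$, pick
$$x_{j+1}\in D\cap\overline B_E(x_j,kM^{-1}(x_j))\quad\text{with}\quad M(x_{j+1})>2M(x_j).$$

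Either the process terminates at some step $j$, in which case $x=x_j$ works (and $M(x_j)\ge M(y)$ is automatic from the doubling), or we obtain an infinite sequence $\{x_j\}\subset D$ with $M(x_j)>2^jM(y)$ and $d(x_{j+1},x_j)\le kM^{-1}(x_j)$. One first checks that the iteration is legitimate, i.e.\ that the hypothesis $M(x_j)\,\mathrm{dist}(x_j,\Gamma)>2k$ propagates from $x_j$ to $x_{j+1}$:
$$\mathrm{dist}(x_{j+1},\Gamma)\ge \mathrm{dist}(x_j,\Gamma)-kM^{-1}(x_j)> kM^{-1}(x_j),$$
so that $M(x_{j+1})\,\mathrm{dist}(x_{j+1},\Gamma)>2M(x_j)\cdot kM^{-1}(x_j)=2k$, and the recursion can be continued indefinitely.

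The geometric growth of $M(x_j)$ makes the tail
$$\sum_{j\ge i} d(x_{j+1},x_j)\le \sum_{j\ge i} kM^{-1}(x_j)<2kM^{-1}(x_i)$$
summable, so $\{x_j\}$ is Cauchy in the complete metric space $E$ and converges to some $x_\infty\in\Sigma$ (since $\Sigma$ is closed). The decisive step is to verify $x_\infty\in D$: comparing the strict inequality $\mathrm{dist}(x_i,\Gamma)>2kM^{-1}(x_i)$ with the strict bound $d(x_i,x_\infty)<2kM^{-1}(x_i)$ just obtained rules out $x_\infty\in\Gamma$. Then $K:=\{x_j\}_{j\ge 0}\cup\{x_\infty\}$ is a compact subset of $D$ on which $M$ must be bounded by hypothesis, contradicting $M(x_j)>2^jM(y)\to\infty$. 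Hence the iteration must terminate at some finite $j$.

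The main delicate point is the strict-inequality bookkeeping that keeps $x_\infty$ strictly inside $D$: the factor $2$ in the hypothesis $M\cdot\mathrm{dist}(\cdot,\Gamma)>2k$ (rather than $>k$) provides precisely the twofold safety margin needed, half of which is consumed at each step to reach $x_{j+1}$ and the remaining half absorbed by the geometric tail summing to the limit $x_\infty$. Once this strict containment is in place, the remaining ingredients (geometric growth of $M$, the Cauchy property via completeness, and the contradiction with boundedness on compact subsets of $D$) follow directly.
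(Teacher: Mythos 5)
Your argument is correct and is essentially the standard doubling argument of Pol\'a\v cik--Quittner--Souplet: iterate the negated conclusion to produce $x_{j+1}$ with $M(x_{j+1})>2M(x_j)$ and $d(x_{j+1},x_j)\le kM^{-1}(x_j)$, verify that the hypothesis $M(x_j)\,{\rm dist}(x_j,\Gamma)>2k$ propagates, use the geometric decay to get a Cauchy sequence whose limit lies in $D$ (not in $\Gamma$), and contradict the boundedness of $M$ on compact subsets of $D$. The paper itself does not reprove this lemma but simply recalls it from \cite{PQS1}, and your proof coincides with the one given there.
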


Next we collect some useful properties of functions with regular or controled variation.
It is known
(see~\cite[Section 1.2]{Se}) that a positive function $L\in C(0,\infty)$ has slow variation at~$\infty$ (resp.,~$0$) iff there exist
continuous functions $\eta,\xi$ such that
 $\ds\lim_{z\to \infty \atop ({\rm resp.,}\,0)} \eta(s)=\lim_{z\to \infty \atop ({\rm resp.,}\,0)} \xi(s)=0$ and
\be{SlowVarRepres}
L(s)=\exp\Bigl[\eta(s)+\int_1^s z^{-1}\xi(z)\,dz\Bigr],\quad s>0. 
\ee
As a consequence of \eqref{SlowVarRepres}, we have 
\be{SlowVarLog}
\log L(s)=o(|\log s|)\quad\hbox{as $s\to\infty$ (resp.,~$0$).}
\ee
Also, $L$ has slow variation whenever it is $C^1$ near $\infty$ (or $0$) and
$\ds\lim_{s\to \infty \atop ({\rm resp.,}\,0)} \ts \frac{sL'(s)}{L(s)}=0$.
\medskip

On the other hand (see~\cite[Appendix]{Se}), $L\in C(0,\infty)$ has controled variation
iff \eqref{SlowVarRepres} holds for some continuous functions $\eta,\xi$ that are bounded  on $(0,\infty)$.
As a consequence, we have 
\be{ControlVarLog}
\log L(s)=O(|\log s|)\quad\hbox{as $s\to\infty$ or $0$.}
\ee
Also, $L$ has controled variation whenever it is $C^1$, positive on $(0,\infty)$
 and $\frac{sL'(s)}{L(s)}$ is bounded. 
 
 \medskip

We shall also use the following technical lemma.

\begin{lem} \label{lemRegVar}
Let $f$ satisfy \eqref{hypcontpos}.
 
 \smallskip

(i) If $f$ has regular variation at $\infty$ with index $m>1$,
then there exist $q>1$ and $c>0$ such that 
$$
\inf_{\lambda\ge 1} \frac{f(s\lambda)}{f(\lambda)}\ge cs^q
\quad\hbox{ for all $s\ge 1$.}
$$

\smallskip

(ii) Assume {\cb that}  
$f$ has regular variation at $0$ and $\infty$ with indices $m_1,m_2>1$.
Then there exist $q>1$ and $c>0$ such that 
\be{lemRegVar3}
\inf_{\lambda>0} \frac{f(s\lambda)}{f(\lambda)}\ge cs^q
\quad\hbox{ for all $s\ge 1$.}
\ee
In particular, $f$ is superlinear (cf.~\eqref{hypsuperlin}).

\smallskip

(iii) Let $L$ be a function with controled variation and $p>1$. 
Then, for $a>0$ sufficiently small, the function $f(s)=s^pL^a(s)$ satisfies
\eqref{lemRegVar3} with some $q>1$ and $c>0$ independent of~$a$.

\smallskip

(iv) If $f$ is superlinear,
then there exist $q>1$ and $C, c>0$ such that 
$$\hbox{$f(s)\le Cs^q$ for all $s\in [0,1]$\quad and\quad
 $f(s)\ge cs^q$ for all $s\ge 1$.}$$
\end{lem}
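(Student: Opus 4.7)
The plan is to combine the Karamata representation \eqref{SlowVarRepres} (which yields Potter-type bounds on the slowly or controledly varying factor, uniform in the base point) with an elementary factorization/compactness argument covering the $\lambda$-ranges not directly handled by that representation.

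For part (i), I would write $f(\lambda) = \lambda^m L(\lambda)$ with $L$ slowly varying at $\infty$. Given $\delta \in (0, m-1)$, \eqref{SlowVarRepres} provides $\lambda_\delta$ such that $|\log[L(s\lambda)/L(\lambda)]| \le \delta(2 + \log s)$ whenever $\lambda \ge \lambda_\delta$ and $s \ge 1$, yielding $f(s\lambda)/f(\lambda) \ge e^{-2\delta} s^{m-\delta}$ on that range. For $\lambda \in [1, \lambda_\delta]$ I factorize $f(s\lambda)/f(\lambda) = [f(s\lambda)/f(\lambda_\delta)] \cdot [f(\lambda_\delta)/f(\lambda)]$ when $s\lambda \ge \lambda_\delta$ (the first factor is controlled by the previous step, the second by continuity and positivity of $f$), and use pure compactness when $s\lambda \le \lambda_\delta$. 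Part (ii) is the same scheme applied symmetrically at $0$: one uses the Karamata representation for slow variation at $0$ on $\lambda \le \mu_\delta$, treats the mixed sub-case (small $\lambda$, large $s\lambda$) by an intermediate factorization through a point of the middle compact range $[\mu_\delta, \lambda_\delta]$; superlinearity \eqref{hypsuperlin} then follows from \eqref{lemRegVar3} by specializing $\lambda = 1$.

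Part (iii) is the delicate point, since the constants must not depend on $a$. With $f(s) = s^p L^a(s)$, I write $f(s\lambda)/f(\lambda) = s^p [L(s\lambda)/L(\lambda)]^a$ and use \eqref{SlowVarRepres} with $\eta, \xi$ bounded on $(0, \infty)$ (available for controled variation); the substitution $z = u\lambda$ in the integral yields the $\lambda$-uniform bound
\begin{equation*}
\Bigl|\log \frac{L(s\lambda)}{L(\lambda)} \Bigr| \le 2\|\eta\|_\infty + \|\xi\|_\infty \log s \qquad \text{for all $\lambda > 0$ and $s \ge 1$,}
\end{equation*}
hence $f(s\lambda)/f(\lambda) \ge e^{-2a\|\eta\|_\infty} s^{p - a\|\xi\|_\infty}$. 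Choosing $a \le (p-1)/(2\|\xi\|_\infty)$ then gives \eqref{lemRegVar3} with $q = (p+1)/2$ and a constant $c$ depending only on $p$ and $L$.

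For part (iv), \eqref{hypsuperlin} provides $s_0 > 1$ with $f(s_0 \lambda) \ge 2 s_0 f(\lambda)$ for all $\lambda > 0$; iterating gives $f(s_0^k \lambda) \ge (2 s_0)^k f(\lambda)$. Writing any $s \ge 1$ as $s_0^k t$ with $t \in [1, s_0]$ and using positivity and continuity of $f$ on $[1, s_0]$ yields $f(s) \ge c s^q$ with any $q < 1 + \log_{s_0} 2$. The bound $f(s) \le C s^q$ on $[0, 1]$ follows from the dual inequality $f(s) \le (2 s_0)^{-k} f(s_0^k s)$ (applied with $k$ such that $s_0^k s \in [1, s_0]$), together with the observation that \eqref{hypsuperlin} forces $f(0) = 0$ (otherwise $\lim_{\lambda \to 0} f(s\lambda)/(s f(\lambda)) = 1/s$ would contradict superlinearity as $s \to \infty$). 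The main obstacle, in my view, is the $a$-uniformity in part (iii): the $\lambda$-independent estimate above is what makes it work, whereas the pointwise bound \eqref{ControlVarLog} alone — which carries an unavoidable $|\log \lambda|$ term — would not suffice.
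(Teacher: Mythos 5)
Your proof is correct and follows essentially the same route as the paper's: Karamata representation for the slowly/controledly varying factor, bound the exponential of the $\xi$-integral, compactness for the middle region. The only substantive difference is bookkeeping in (i)--(ii): the paper estimates $\int_\lambda^{s\lambda}z^{-1}\xi(z)\,dz$ in one stroke uniformly for all $\lambda\ge 1$ (splitting the integrand's compact-range and tail contributions), while you split on whether $\lambda$ exceeds a threshold $\lambda_\delta$ and chain the ratios through an anchor point; these are equivalent. Your treatment of (iii) (the $\lambda$-uniform integral bound being the key, as opposed to the pointwise \eqref{ControlVarLog} which would leave a $|\log\lambda|$ loss) and (iv) (iterating the superlinearity inequality in both directions) match the paper. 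One nit: in (ii), the superlinearity conclusion \eqref{hypsuperlin} does \emph{not} follow from \eqref{lemRegVar3} by ``specializing $\lambda=1$'' --- that only controls $f(s)/f(1)$, whereas \eqref{hypsuperlin} requires the infimum over all $\lambda>0$. But \eqref{lemRegVar3} already carries that infimum, so dividing both sides by $s$ gives $\inf_{\lambda>0} f(s\lambda)/(sf(\lambda))\ge cs^{q-1}\to\infty$ directly.
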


\begin{proof}
(i) By \eqref{SlowVarRepres} we have
$$f(s)=\tau(s)s^m\exp\Bigl[\int_1^s z^{-1}\xi(z)\,dz\Bigr],\quad s\ge 1,$$
where $C_1\le\tau(s)\le C_2$ for some constants $C_1, C_2>0$
and $\lim_{s\to\infty}\xi(s)=0$.
Fix $0<\eta<m-1$ and pick $s_0>1$ such that $|\xi(s)|\le\eta$ for all $s\ge s_0$. 
For all $\lambda, s>1$, we have
$$
\int_\lambda^{s\lambda} z^{-1}|\xi(z)|\,dz
\le \int_1^{s_0} z^{-1}|\xi(z)|\,dz+\eta\int_{s_0\vee\lambda}^{s(s_0\vee\lambda)} z^{-1}\,dz
=C+\eta\log s$$
hence 
$$\frac{f(s\lambda)}{f(\lambda)}\ge C_1C_2^{-1}{\cb s^m} \exp\Bigl[\int_\lambda^{s\lambda} z^{-1}\xi(z)\,dz\Bigr]
\ge C_1C_2^{-1}e^{-C}s^{m-\eta}.$$

\smallskip
(ii) Denote $\omega_1=(0,1]$, $\omega_2=(1,\infty)$. By \eqref{SlowVarRepres} we have
$$f(s)=\tau_i(s)s^{m_i}\exp\Bigl[\int_1^s z^{-1}\xi_i(z)\,dz\Bigr],\quad s\in \omega_i,\ i\in\{1,2\}$$
where $C_1\le\tau_i(s)\le C_2$ for some constants $C_1, C_2>0$
and $\lim_{s\to 0}\xi_1(s)=\lim_{s\to\infty}\xi_2(s)=0$.
Set $m=\min(m_1,m_2)$ and fix $0<\eta<m-1$. Let $\lambda>0$ and $s>1$.

$\bullet$ First consider the case when $\lambda,s\lambda$ are on the same side of $1$, i.e.~$\lambda,s\lambda\in \omega_i$ for some $i\in\{1,2\}$. Then
$$\frac{f(s\lambda)}{f(\lambda)}
=\frac{\tau_i(s\lambda)}{\tau_i(\lambda)}\frac{(s\lambda)^{m_i}}{\lambda^{m_i}} \exp\Bigl[\int_\lambda^{s\lambda} z^{-1}\xi_i(z)\,dz\Bigr]
\ge \frac{C_1}{C_2}s^{m_i} \exp\Bigl[\int_\lambda^{s\lambda} z^{-1}\xi_i(z)\,dz\Bigr].$$
Pick $0<s_1<1<s_2$ such that $|\xi_1({\cb z})|\le\eta$ for all ${\cb z}\le s_1$
and $|\xi_2({\cb z})|\le\eta$ for all ${\cb z}\ge s_2$. 
Denote $J=\int_{s_1}^{s_2} z^{-1} |\xi_i(z)|\,dz$,  
$\tilde \lambda=\lambda\wedge (s^{-1}s_1)$ if $i=1$, $\tilde \lambda=\lambda\vee s_2$ if $i=2$.
Therefore,
$$
\exp\Bigl[\int_\lambda^{s\lambda} z^{-1}\xi_i(z)\,dz\Bigr]
\ge \exp\Bigl[-J-\int_{\tilde \lambda}^{s\tilde \lambda} z^{-1}|\xi_i(z)|\,dz\Bigr]
\ge C\exp\Bigl[-\eta \int_{\tilde \lambda}^{s\tilde \lambda} z^{-1}\,dz\Bigr]=Cs^{-\eta}
$$
(where $C$ denotes a generic positive constant independent of $\lambda, s$), hence
\be{lemRegVarEq1}
\frac{f(s\lambda)}{f(\lambda)}
\ge C_1C_2^{-1}s^{m_i} Cs^{-\eta}=Cs^{m_i-\eta}.
\ee

$\bullet$ Otherwise we have $\lambda\le 1\le s\lambda$.
Thus $f(s\lambda)\ge C_3(s\lambda)^{m_2-\eta}$ and $f(\lambda)\le C_4\lambda^{m_1-\eta}$
for some constants $C_3, C_4>0$, hence
\be{lemRegVarEq2}
\frac{f(s\lambda)}{f(\lambda)}\ge C_3C_4^{-1} (s\lambda)^{m_2-\eta}\lambda^{-m_1+\eta}
= Cs^{m_2-\eta}\lambda^{m_2-m_1}\ge Cs^{m-\eta}
\ee
(indeed, if $m_2-m_1\le 0$ then $\lambda^{m_2-m_1}\ge 1$, whereas if $m_2-m_1\ge 0$ then $s^{m_2-\eta}\lambda^{m_2-m_1}
\ge s^{m_2-\eta}s^{m_1-m_2}=s^{m_1-\eta}$).

The assertion follows from \eqref{lemRegVarEq1} and \eqref{lemRegVarEq2}.

\smallskip
(iii) We know (cf.~before \eqref{ControlVarLog}) that
$$L(s)=\tau(s)\exp\Bigl[\int_1^s z^{-1}\xi(z)\,dz\Bigr],\quad s>0,$$
where $C_1\le\tau(s)\le C_2$ and $|\xi(s)|\le k$ for some constants $C_1, C_2, k>0$.
Therefore, for $a\in(0,1)$,
$$\frac{f(s\lambda)}{f(\lambda)}
=s^p\Bigl(\frac{L(\lambda s)}{L(\lambda)}\Bigr)^a
\ge [C_1/C_2]^a s^p\exp\Bigl[-\int_\lambda^{s\lambda} ak z^{-1}\,dz\Bigr]
\ge [C_1/C_2] s^{p-ak}.$$
The conclusion follows by taking $a<\frac{p-1}{k}\wedge 1$.

\smallskip

(iv) By assumption, there exists $\sigma>1$ such that
$$\inf_{\lambda>0}\ {f(\lambda \sigma)\over \sigma f(\lambda)}\ge 2.$$
Therefore, setting $q=1+(\ln 2/\ln \sigma)$, we get 
$f(\sigma^{-1}\lambda)\le (2\sigma)^{-1} f(\lambda)=\sigma^{-q}f(\lambda)$ for all $\lambda>0$.
We deduce that, for any nonnegative integer $j$,
$$f(\sigma^{-j}\lambda)\le C(\sigma^{-j})^q\ \hbox{ for all $\lambda\in [\sigma^{-1},1]$}
\quad\hbox{and}\quad
f(\sigma^{j}\lambda)\ge c(\sigma^j)^q\ \hbox{ for all $\lambda\in [1,\sigma]$}.$$
Consequently,
$f(s)\le C\sigma^qs^q$ for all $s\in[\sigma^{-j-1}, \sigma^{-j}]$
and
$f(s)\ge c\sigma^{-q}s^q$ for all $s\in[\sigma^j, \sigma^{j+1}]$.
The conclusion follows.
\end{proof}

For the proof of Theorem~\ref{thm1} under assumption \eqref{hypUB1b}, 
we shall also use the following well-known nonexistence results for inequalities.

\begin{lem} \label{lemGidFuj}
Let $c>0$.
\smallskip

(i) For $p\in (1,p_{sg}]$, the inequality $-\Delta u\ge cu^p$ in $\Rn$ admits no nontrivial (strong) solution.

\smallskip

(ii) For $p\in (1,p_F]$, the inequality 
\be{eqGidFuj1}
u_t-\Delta u\ge cu^p\quad\hbox{in $\Rn\times(0,\infty)$}
\ee
admits no nontrivial (strong) solution.
\end{lem}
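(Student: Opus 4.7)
Both assertions are classical and can be proved in a unified way by the rescaled test function (nonlinear capacity) method of Mitidieri--Pohozaev/Baras--Pierre. The key is to choose the cutoff raised to the right power $\alpha = 2p/(p-1)$, which makes $|\Delta\varphi_R|^{p'}\varphi_R^{-p'/p}$ (resp.\ $(|\partial_t\Phi_R|+|\Delta\Phi_R|)^{p'}\Phi_R^{-p'/p}$) pointwise controlled by a constant on the support of the derivatives, and then to apply H\"older's inequality.

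For (i), fix $\eta\in C_c^\infty([0,\infty))$ with $\eta\equiv 1$ on $[0,1]$, $\mathrm{supp}\,\eta\subset[0,2]$, and for $R\ge 1$ set $\varphi_R(x)=\eta(|x|/R)^\alpha$ with $\alpha=2p/(p-1)$. Multiply $-\Delta u\ge cu^p$ by $\varphi_R$ and integrate twice by parts (justified by $u\in W^{2,r}_{\mathrm{loc}}$ and compact support of $\varphi_R$) to get
$$
c\int_{\Rn}u^p\varphi_R\,dx\le \int_{\Rn}u\,|\Delta\varphi_R|\,dx.
$$
By the choice of $\alpha$, $|\Delta\varphi_R|\le CR^{-2}\varphi_R^{1/p}\mathbf{1}_{A_R}$ with $A_R=\{R\le |x|\le 2R\}$, and H\"older's inequality yields
$$
c\int u^p\varphi_R\le C R^{-2+n/p'}\Bigl(\int_{A_R}u^p\varphi_R\Bigr)^{1/p},
$$
hence $\int u^p\varphi_R\le CR^{n-2p/(p-1)}$. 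For $p<p_{sg}=n/(n-2)_+$ the exponent is negative and letting $R\to\infty$ forces $u\equiv 0$.

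For (ii), use $\Phi_R(x,t)=\eta(|x|/R)^\alpha\,\zeta((t-2R^2)/R^2)^\alpha$ where $\zeta\in C_c^\infty((-1,1))$ with $\zeta\equiv 1$ on $[-1/2,1/2]$, so that $\Phi_R$ is supported in $B_{2R}\times (R^2,3R^2)\subset\Rn\times(0,\infty)$ and all boundary terms vanish upon integration by parts. The same H\"older argument, using that $|\partial_t\Phi_R|$ and $|\Delta\Phi_R|$ both scale like $R^{-2}$ on a set of measure $O(R^{n+2})$, gives $\int\!\!\!\int u^p\Phi_R\le CR^{n+2-2p/(p-1)}$, which vanishes as $R\to\infty$ exactly when $p<p_F=(n+2)/n$.

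\textbf{Main obstacle.} The subcritical ranges $p<p_{sg}$ and $p<p_F$ fall out of the above calculation once $\alpha$ is chosen correctly. The genuine difficulty is the critical cases $p=p_{sg}$ and $p=p_F$, where the exponent in the test function estimate is exactly $0$ and only yields $u\in L^p$ (resp.\ $u\in L^p(\Rn\times(0,\infty))$). The fix is to keep track, in the H\"older step, that $\Delta\varphi_R$ (resp.\ $\partial_t\Phi_R+\Delta\Phi_R$) is supported on the annulus $A_R$ (resp.\ its parabolic analogue); the resulting sharper inequality
$$
c\int u^p\varphi_R\le C\Bigl(\int_{A_R} u^p\Bigr)^{1/p}
$$
has right-hand side tending to $0$ by absolute continuity of the integral, whence $u\equiv 0$.
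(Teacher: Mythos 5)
The paper does not prove Lemma~\ref{lemGidFuj} itself; it only cites \cite[Sections~2 and 26]{MP}. Your approach---the rescaled test function / nonlinear capacity method---is exactly the method of that reference, so the route is the right one, and your proof of part~(i), including the critical case $p=p_{sg}$ handled via the sharper annular estimate and absolute continuity, is correct.

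Part~(ii), however, has a genuine gap, created by the choice of time cutoff. Your $\Phi_R$ is supported in $B_{2R}\times(R^2,3R^2)$. These cylinders are not nested and they drift to infinity: their union is contained in $\{(x,t):|x|<2\sqrt t\}$, so they never exhaust $\Rn\times(0,\infty)$. Consequently, the conclusion $\int\!\!\int u^p\Phi_R\le CR^{n+2-2p/(p-1)}\to 0$ for $p<p_F$ does \emph{not} force $u\equiv 0$ (it is an estimate on a moving, escaping region, which a nontrivial $u$ can satisfy a priori), and in the critical case $p=p_F$ the uniform bound $\int\!\!\int u^p\Phi_R\le C$ does \emph{not} give $u\in L^p(\Rn\times(0,\infty))$, so the absolute-continuity step you invoke is unfounded. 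Note also that your $\Phi_R$ never interacts with the boundary $\{t=0\}$, so your argument does not exploit the half-space structure of the domain at all, whereas that structure is what makes $p_F$ the right exponent.

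The fix is to anchor the time cutoff at a fixed $t_1>0$ rather than at $t=2R^2$. Take $\zeta\in C^\infty([0,\infty))$ nonincreasing with $\zeta\equiv 1$ on $[0,1]$ and $\zeta=0$ on $[2,\infty)$, and set $\Phi_R(x,t)=\eta(|x|/R)^\alpha\,\zeta\bigl((t-t_1)/R^2\bigr)^\alpha$. Integrating $(u_t-\Delta u)\Phi_R$ over $\Rn\times(t_1,\infty)$ produces the boundary term
$$-\int_{\Rn}u(x,t_1)\Phi_R(x,t_1)\,dx\ \le\ 0,$$
which can be discarded because $u\ge 0$ (here one uses $t_1>0$ so that $u(\cdot,t_1)$ is locally bounded; this is where the half-space structure enters). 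Now the supports $B_{2R}\times(t_1,t_1+2R^2)$ nest and exhaust $\Rn\times(t_1,\infty)$, the derivative terms still scale like $R^{-2}\Phi_R^{1/p}$ on a set of measure $O(R^{n+2})$, and your H\"older argument together with the critical-case refinement goes through verbatim, giving $u\equiv 0$ on $\Rn\times(t_1,\infty)$; letting $t_1\to 0^+$ finishes the proof. (An equivalent alternative is to first invoke the strong maximum principle to get $u>0$ for $t>t_1$ and then reduce to the Fujita-type blow-up theorem of \cite{MP}, Section~26.)
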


See, e.g.,~\cite[Sections~2 and 26]{MP}; the results remain valid for suitable weak solutions.
On the other hand, we note that the range of $p$ cannot be enlarged for
solutions of \eqref{eqGidFuj1} in $\Rn\times(-\infty,\infty)$ instead of $\Rn\times(0,\infty)$,
as shown by the counter-examples in \cite{Tal09, Kur} and \cite[Proposition~21.14]{QSb}.

\medskip

\subsection{Proof of Theorem~\ref{thm1-0}{\cb\rm(i)}, of Theorem~\ref{thm1} under assumption \eqref{hypUB1b} and of Theorem~\ref{thm1c}.}  
\label{ProofThm1}
First note that, under assumption \eqref{hypUB1b} of Theorem~\ref{thm1} or \eqref{HypLiouvilleReg} of Theorem~\ref{thm1c},
Lemma~\ref{lemRegVar}(ii){\cb (iv)} 
guarantees that
\be{DoublingA00}
\hbox{$g(s):=\ds\frac{f(s)}{s}$ can be extended to a continuous function on $[0,\infty)$}
\ee
with $g(0)=0$.

\begin{proof}[Proof in the parabolic case]
We set 
\be{DefMcases} 
M(u)=\Bigl[\ds{f(u)\over {\cb\sigma}+u}\Bigr]^{1/2},\quad\hbox{\cb with } \sigma=
\begin{cases}
1 \quad&\hbox{under the assumptions of Theorem~\ref{thm1-0}{\cb\rm(i)},} \\
\noalign{\vskip 1mm}
0 \quad&\hbox{otherwise}.
\end{cases}
\ee
The sought-for estimates \eqref{EstimA-0} and \eqref{EstimA} write equivalently as
$$M(u(X))\le C\bigl(\sigma+d^{-1}(X,\partial D)\bigr),\quad X\in D$$
with $C=C(n,f)>0$.
Assume for contradiction that there exist sequences of
 domains $\Omega_k$, times $T_k>0$, solutions $u_k$ of 
\be{ContradA1} 
\mathcal{L}u_k=f(u_k),\quad X\in D_k:=\Omega_k\times(0,T_k),
\ee
and points $Y_k\in D_k$ such that
\be{ContradA2} 
M_k(Y_k)>2k\bigl(\sigma+d^{-1}(Y_k,\partial D_k)\bigr)\ge 2kd^{-1}(Y_k,\partial D_k)
\ee
where $M_k=M(u_k)$. We note that that the function $M_k$ is continuous
(owing to \eqref{DoublingA00} in the second case of \eqref{DefMcases}), hence locally bounded on $D_k$.

We use Lemma~\ref{lemDoubling} with 
${\cb E=\,}\R^{n+1}$, equipped with the parabolic distance $d=d_P$, 
$\Sigma=\Sigma_k=\overline D_k$, $D=D_k$, and  $\Gamma=\partial D_k$.
Consequently, there exist points $X_k\in D_k$ such that
\be{DoublingA0}
M_k(X_k)>2kd^{-1}(X_k,\partial D_k),\quad M_k(X_k)\ge M_k(Y_k)>2k\sigma
\ee
and 
\be{DoublingA}
M_k(X)\le 2M_k(X_k)\quad\hbox{ in $\hat D_k:=\bigl\{X;\, d(X,X_k)\le kM_k^{-1}(X_k)\bigr\}$}
\ee
(note that $\hat D_k\subset D_k$ owing to the first part of \eqref{DoublingA0}).
{\cb Set
$$m_k=u_k(X_k).$$
We also} have
\be{DoublingAspecial}
m_k\to\infty\quad\hbox{under the assumptions of Theorem~\ref{thm1-0}{\cb\rm(i)}}
\ee
(due to the continuity of $f$ and the fact that $f(m_k)\ge 4k^2$ by \eqref{DoublingA0})
{\cb and we may thus assume}
\be{DoublingAspecial2}
{\cb m_k\ge 1\quad\hbox{under the assumptions of Theorem~\ref{thm1-0}{\rm(i).}}}
\ee

\smallskip

We now define the rescaled solutions. Setting 
$$  
\alpha_k= M_k^{-1}(X_k),$$
we let
$$v_k(Y)=m_k^{-1}u_k(T_k(Y))\quad\hbox{\cb for $Y\in \tilde D_k$,}$$
where 
$$\tilde D_k=\bigl\{Y=(y,\tau)\in\R^n\times\R:|y|<k/2,\ |\tau|<k^2/4\bigr\}$$
and
$$T_k(Y)=(x_k+\alpha_k y,t_k+\alpha^2_k \tau),\quad\hbox{\cb with $X_k=(x_k,t_k)$}.$$
We claim that there exists $A>0$ (independent of $k$) such that 
\be{DoublingA1}
v_k(Y)\le A\quad\hbox{ in $\tilde D_k$.}
\ee
To see this, for $Y\in \tilde D_k$, set 
\be{DoublingNotation}
X=T_k(Y)\in \hat D_k,\quad \lambda=m_k,\quad s=v_k(Y).
\ee
In the second case of \eqref{DefMcases}, for any $Y$ such that $v_k(Y)>0$, we have 
\be{DoublingA2}
{f(\lambda s)\over sf(\lambda)}={f(\lambda s)\over \lambda s}{\lambda\over f(\lambda)}
={f(u_k(X))\over u_k(X)}{m_k\over f(m_k)}={M_k^2(X)\over M_k^2(X_k)}\le 4
\ee
by \eqref{DoublingA}. Therefore \eqref{DoublingA1} follows from 
{\cb Lemma~\ref{lemRegVar}(i) and \eqref{DoublingAspecial2}
under the assumptions of Theorem~\ref{thm1-0}(i),
from Lemma~\ref{lemRegVar}(ii)
under the assumptions of Theorem~\ref{thm1c},
or else by assumption~\eqref{hypUB1b}.}
In the first case of \eqref{DefMcases}, for any $Y$ such that $v_k(Y)>0$, we have
$$
{f(\lambda s)\over sf(\lambda)}={f(u_k(X))\over u_k(X)}{m_k\over f(m_k)}
\le 
{f(u_k(X))\over 1+u_k(X)}{1+m_k\over f(m_k)}\Bigl(1+{m_k\over u_k(X)}\Bigr)
=\Bigl(1+{m_k\over u_k(X)}\Bigr){M_k^2(X)\over M_k^2(X_k)},
$$
hence
\be{DoublingA2b}
{f(\lambda s)\over sf(\lambda)}\le 4\bigl(1+s^{-1}\bigr)
\ee
and the existence of $A$ then follows from Lemma~\ref{lemRegVar}(i).

Keeping the notation \eqref{DoublingNotation}, we then compute
\be{DoublingA3a}
\mathcal{L}v_k(Y)
=\alpha_k^2m_k^{-1}f\bigl(u_k(T_k(Y))\bigr)
={\cb(1+\sigma m_k^{-1})}{f\bigl(u_k(T_k(Y))\bigr)\over f(m_k)}={\cb(1+\sigma m_k^{-1})}{f(\lambda s)\over f(\lambda)}.
\ee
{\cb Consequently, using \eqref{DoublingAspecial2}}--\eqref{DoublingA2b}, we obtain
\be{DoublingA3}
0\le \mathcal{L}v_k(Y)\le {\cb 8}(A+1), \quad Y\in \tilde D_k.
\ee
Moreover, we have $v_k(0,0)=1$.
By \eqref{DoublingA1}, \eqref{DoublingA3} and parabolic estimates, it follows that, up to extracting a subsequence, 
$v_k$ converges locally uniformly to a function 
$w\in W^{2,1;r}_{loc}(\Rn\times\R)$, $1<r<\infty$, which satisfies
$\mathcal{L}w\ge 0$ and $0\le w\le A$ in $Q$, along with $w(0,0)=1$. 
The strong maximum principle yields that, for some $\tau_0\in[-\infty,0)$, we have 
\be{DoublingA3b}
\hbox{$w=0$ in $\Rn\times(-\infty,\tau_0]$ and $w>0$ in $\Rn\times(\tau_0,\infty)$.}
\ee

We then argue separately according to our different assumptions:
\medskip

$\bullet$ {\it Case of Theorem~\ref{thm1} under assumption \eqref{hypUB1b}.}
Our assumption guarantees that
$$\theta_A:=\inf_{\lambda>0\atop 0<s\le A} \frac{f(\lambda s)}{s^p f(\lambda)}>0. 
$$
Applying this with $\lambda=m_k$ and $s=v_k(Y)\le A$ (hence $u_k(T_k(Y))=\lambda s$) guarantees
$$
{f\bigl(u_k(T_k(Y))\bigr)\over v_k^{p}(y)f(m_k)} \ge c:=\theta_A>0
$$
hence
$\mathcal{L}v_k(Y)\ge cv_k^{p}(Y)$, for all $Y\in \tilde D_k$, by \eqref{DoublingA3a}.
It follows that $w$ is a nontrivial bounded (strong) solution of
\be{DoublingLiouv1}
\mathcal{L}w\ge cw^p \quad\hbox{in $Q$,}
\ee
which contradicts Lemma~\ref{lemGidFuj}(ii).

\smallskip

$\bullet$ {\it Case of Theorems~\ref{thm1-0}{\cb\rm(i)} and \ref{thm1c}.}
Let $q_1$ (resp.~$q_2$) $\in (1,p_S)$ be the index of regular variation of $f$ at $\infty$
(resp., at $0$, under the assumptions of Theorem~\ref{thm1c}).
At least one the following cases must occur, for some subsequence:

\medskip

(i) $m_k\to \infty$;

\smallskip

(ii) $m_k\to 0$;

\smallskip

(iii) $m_k\to a\in (0,\infty)$.

\medskip

First consider case (i). 
It is known (see \cite[Theorem~1.1]{Se})
that the convergence in \eqref{hypregulvar} is uniform for $s$ in compact sets of $(0,\infty)$.
It follows from \eqref{DoublingA3b} that
\be{DoublingLiouv2a}
\lim_k{f(m_k v_k(Y))\over f(m_k)}=w^q(Y),
\quad\hbox{for each $Y\in \Rn\times(\tau_0,\infty)$,}
\ee
with $q=q_1$. {\cb By \eqref{DoublingA3a} (and using \eqref{DoublingAspecial} in case $\sigma=1$), we deduce}
 that $w$ is a bounded (strong hence) classical solution of
\be{DoublingA3b2}
\mathcal{L}w=w^q \quad\hbox{in $\Rn\times(\tau_0,\infty)$.}
\ee
Moreover, we cannot have $\tau_0>-\infty$, since otherwise 
$w(\cdot,\tau_0)\equiv 0$ by \eqref{DoublingA3b} and $\mathcal{L}w\le Cw$, hence $w\equiv 0$ by the maximum principle:
a contradiction.
Consequently, $\tau_0=-\infty$, which is a contradiction with Theorem~B.
Note that {\cb if $\sigma=1$, i.e.,} under the assumptions of Theorem~\ref{thm1-0}{\cb\rm(i)},
only case~(i) occurs, {\cb due to \eqref{DoublingAspecial}.}
Therefore, Theorem~\ref{thm1-0}{\cb\rm(i)} in the parabolic case is proved.
In case~(ii), exactly the same argument applies with $q_2$ instead of $q_1$,
hence again a contradiction.

Finally consider case (iii). For each $Y\in Q$, owing to the continuity of $f$, we have
$$\lim_k{f(m_k v_k(Y))\over f(m_k)}={f(aw(Y))\over f(a)}.$$
Therefore, setting $\mu=\sqrt{f(a)/a}$ and replacing $w$
with $\tilde w(y,s)=aw(\mu y,\mu^2 s)$, 
we get a bounded strong solution of 
\be{DoublingLiouv3}
\mathcal{L}w=f(w) \quad\hbox{in $Q$,}
\ee
which contradicts assumption~\eqref{HypLiouville}.
Theorem~\ref{thm1c} in the parabolic case is proved.
\end{proof}

\begin{rem} \label{remellpar}
We observe that, in the above proof, if the solutions $u_k$ are assumed to be time independent, then so is the limiting function $w$.
Therefore, for a given $f$, the universal estimate for time independent solutions holds whenever
the corresponding elliptic Liouville properties are available. 
\end{rem}

\begin{proof}[Proof in the elliptic case]
Note that any solution can be regarded as a solution of the corresponding parabolic problem at time $t=1$ with, say, $T=2$. 
In view of Remark~\ref{remellpar}, the desired conclusions follow
from Lemma~\ref{lemGidFuj}(i), or Theorem~A and/or assumption \eqref{HypLiouville}.
\end{proof}

\subsection{Proof of {\cb Theorem~{\ref{thm1-0}{\rm(ii)}}}, of Theorem~\ref{thm1} under assumptions {\cb\eqref{hypUB1c}}, \eqref{hypUB1a}
and of Theorems~\ref{thm1b}, \ref{thm2eps}, {\cb\ref{thm2epsB}}.}

\begin{proof}[Proof of {\cb Theorem~{\ref{thm1-0}{\rm(ii)}}} and of Theorem~\ref{thm1b} under assumption \eqref{hypUB1b}]

(i) {\it Parabolic case.}
The result easily follows by modiying the proof in subsection~\ref{ProofThm1} 
{\cb with $\sigma=1$} along the lines of \cite[Theorem~4.1]{PQS2}.
In addition to the Liouville properties in the whole space, used in subsection~\ref{ProofThm1},
the modified arguments require the following 
Liouville properties in the half-space
$H=\Rn_+\times\R$ (where $\Rn_+=\{(x_1,\dots,x_n)\in \Rn;\, x_1>0\}$).
{\cb Namely, for the case of Theorem~{\ref{thm1-0}{\rm(ii)}}, we use the fact that} the problem
\be{halfspace1b}
\begin{cases}
\mathcal{L}u = u^q \quad&\hbox{in $H$}, \\
\noalign{\vskip 1mm}
u=0 \quad&\hbox{on $\partial H$} 
\end{cases}
\ee
with $q\in (1,p_S)$ has no nontrivial bounded solution
by \cite[Theorem~2.1(i)]{PQS2}\footnote{The boundedness assumption can actually be removed, see~\cite{Q21b}.}.
{\cb For the} case of Theorem~\ref{thm1b} under assumption \eqref{hypUB1b}, {\cb we use the fact that,} 
since $p\le p_{**}=1+2/(n+1)$, 
the inequality
\be{halfspace2}
\mathcal{L}u \ge u^p \quad\hbox{in $H$}
\ee
has no nontrivial strong solution (see \cite[Section~32]{MP}).

\smallskip

(ii) {\it Elliptic case.}
As before, any solution can be regarded as a solution of the parabolic problem at time $t=1$ with $T=2$. 
Also, by \eqref{hypsuperlin} with $\lambda=1$
(which is true either by assumption \eqref{hypUB1b} or by Lemma~\ref{lemRegVar}),
it is sufficient to prove that $f(u)/u\le C$ instead of $u\le C$ in \eqref{EstimB}.
In view of Remark~\ref{remellpar}, 
the desired conclusion follows provided the elliptic Liouville properties in the whole space and in the half-space are satisfied.

For the whole space, they are satisfied, {\cb respectively} owing to {\cb Theorem~A and}
Lemma~\ref{lemGidFuj}(i) (noting that $p_{**}\le p_*$). For the half space, the elliptic Liouville properties corresponding to
{\cb\eqref{halfspace1b}}-\eqref{halfspace2} are also true, respectively owing 
 to {\cb \cite[Theorem~1.1]{GSb} and \cite{BCDN}.}
\end{proof}

\begin{rem} \label{remGSnodoubling}
In the elliptic case, if we consider only the case $\Omega$ bounded, then 
our modified rescaling procedure (based on $f(u)/u$) can be applied without using the doubling Lemma
since, as in \cite{GSb}, it then suffices to work with points $x_k$ where $u_k$ achieves its maximum.
However this argument is not sufficient for unbounded domains
 since a given solution to \eqref{eqf} with $u=0$ on $\Gamma$ might then a priori be unbounded.
\end{rem}

\begin{proof}[Proof of {\cb Theorem~\ref{thm1} } 
under assumption \eqref{hypUB1a}] 
 {\cb This is a direct consequence of Theorem~\ref{thm1c} 
provided property \eqref{HypLiouville} is satisfied.} In the elliptic case, this is true for $n\ge 3$ by Theorem~D,
whereas for $n\le 2$ it follows from the nonexistence of nonconstant nonnegative entire superharmonic functions.
In the parabolic case, {\cb property \eqref{HypLiouville} is guaranteed by}
Theorem~{\cb\ref{S-thmLiouvJLV2}}
(the latter will be independently proved in the next section).
\end{proof}

\begin{proof}[Proof of Theorem~\ref{thm2eps}] 
{\cb It} follows {\cb from a simple} modification of the proof of Theorem~\ref{thm1c}.
Namely, assuming for contradiction that estimate \eqref{UBfeps} fails for some sequence $\eps_k\to 0$,
there exist domains $\Omega_k$, times $T_k>0$, points $Y_k\in D_k$ and solutions $u_k$ of \eqref{ContradA1} 
with $f=f_k:=f_{\eps_k}$ such that \eqref{ContradA2} holds {\cb (with $\sigma=0$)}.
Properties \eqref{DoublingA00} {\cb and \eqref{DoublingA1}} remain valid owing to 
 {\cb assumption~\eqref{hypsuperlineps} (using also Lemma~\ref{lemRegVar}(iv) for \eqref{DoublingA00}).}
The rest of the proof is 
 then unchanged, replacing $f$ by $f_k$, 
and making use of assumptions
\eqref{hyppsieps}, \eqref{hypphieps}, \eqref{hypgeps} to treat cases (i), (ii) and (iii), respectively
{\cred (those cases refer to lines 5-7 after eqn.~\eqref{DoublingLiouv1}).}
\end{proof}

{\cb
\begin{proof}[Proof of Theorem~\ref{thm2epsB}] 
It follows from a simple modification of the proof of Theorem~\ref{thm1-0}.
In particular we note that assumption \eqref{hypfunifbdd} guarantees that $m_k\to\infty$.
Then assumption \eqref{hypsuperlineps2} implies \eqref{DoublingA1} and \eqref{DoublingLiouv2a} 
follows from \eqref{hyppsiepsB}.
\end{proof}
}

\begin{proof}[Proof of Theorems~\ref{thm1} and \ref{thm1b} under assumption \eqref{hypUB1c}] 
{\cb Theorem~\ref{thm1} under assumption \eqref{hypUB1c}} is a special case of Theorem~\ref{thm2eps}. 
Indeed, {\cb since $L$ has controled variation, one easily checks that,
for any compact $\mathcal{K}\subset(0,\infty)$, there exist constants $C_1>C_2>0$ such that
$C_1\le {L(\lambda s)\over L(\lambda)}\le C_2$ for all $s\in \mathcal{K}$ and all $\lambda>0$.}
Then taking $f_\eps(s)=s^p L^\eps(s)$, 
we see that \eqref{hypgeps}-\eqref{hyppsieps} are satisfied with $g(s)=\phi(s)=\psi(s)=s^p$, and that
\eqref{hypsuperlineps} holds for $\eps_{\cb 1}>0$ small owing to Lemma~\ref{lemRegVar}(iii).
Also the Liouville type assumption of Theorem~\ref{thm2eps} holds owing to Theorems~A and~{\cred B}. 

{\cb As for Theorem~\ref{thm1b} under assumption \eqref{hypUB1c}, it is likewise a special case of Theorem~\ref{thm2epsB}
(noting in particular that $f_\eps(s)=s^p L^\eps(s)$ satisfies  \eqref{hypfunifbdd} owing to \eqref{ControlVarLog}).}
\end{proof}

\section{Proof of Theorems~\ref{S-thmLiouvJLV2} and \ref{S-thmLiouvJLV1}} \label{SecProofLiouvParab}

{\cred Throughout this section we shall denote 
$F(s)=\int_0^s f(z)dz$ and $\tilde F(s)=\int_0^s \tilde f(z)dz$ for $s\ge 0$.}
The proof of Theorem~\ref{S-thmLiouvJLV1} uses the following key 
gradient estimate.

\begin{lem} \label{S-lemLiouvBV3}
Assume \eqref{hypcontpos}, \eqref{S-LiouvParabHypInt} and \eqref{S-LiouvParabHypBV}.
Let $\Omega$ be an arbitrary domain in $\Rn$, $T>0$, and
$0\leq \varphi\in\mathcal{D}(\Omega\times (-T,T))$.
Let $u\in W^{2,1;r}_{loc}(\Omega\times (-T,T))$ for all finite $r$, be a 
strong solution of
$u_t-\Delta u=f(u)$ in $\Omega\times (-T,T)$.
Fix $\eps>0$  
and set 
$$v= u+\eps,\quad g_\eps=f(u)-f(u+\eps).$$
Denote 
$$I=\int\int \varphi\,v^{-2}|\nabla v|^4,\quad
L=\int\int \varphi\,f(v)\tilde f(v),
$$
where, here and below, double integrals are over $\Omega\times (-T,T)$.
Then there exists $C=C(n,f)>0$ such that
\be{S-LiouvBVestimIK}
\begin{aligned}
I+ L
\leq\ &C\int\int \varphi\,\bigl[(v_t)^2 +|v_t|\,v^{-1}|\nabla 
v|^2\bigr]+|\nabla v|^2 |\Delta\varphi|\\
&\quad +C\int\int \bigl(\tilde f(v)+|v_t|+v^{-1} |\nabla v|^2+|g_\eps| \bigr)
|\nabla v\cdot\nabla\varphi| + \tilde F(v)|\varphi_t|{\cred \ +F(v)|\Delta\varphi|}\\
&\quad +C\int\int \varphi\,\bigl(v^{-1} |\nabla v|^2+\tilde f(v)+|g_\eps| 
\bigr)|g_\eps| \\
&\equiv T_1+T_2+T_3.
\end{aligned}
\ee
\end{lem}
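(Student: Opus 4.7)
The strategy is a parabolic Bochner-plus-test-function argument in the spirit of \cite{BV98}, with the power $u^p$ replaced throughout by $\tilde f(v)$ and its antiderivative. The goal is to combine two integral identities: one obtained by testing the equation against $\tilde f(v)\varphi$ (which produces $L$ on the left), the other by applying Bochner's formula to $v$ with an appropriate weight (which produces $I$).

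\emph{Testing against $\tilde f(v)\varphi$.} Multiply the equation $v_t-\Delta v = f(v)+g_\eps$ by $\tilde f(v)\varphi$ and integrate over $\Omega\times(-T,T)$. Using $\tilde f'(v)=v^{-1}f(v)$ (so $\nabla\tilde f(v)=v^{-1}f(v)\nabla v$) and $\partial_t\tilde F(v)=\tilde f(v)v_t$ (where $\tilde F'=\tilde f$), an integration by parts yields
$$
\int\int\varphi f(v)\tilde f(v) = \int\int\varphi v^{-1}f(v)|\nabla v|^2 - \int\int\varphi_t\tilde F(v) + \int\int\tilde f(v)\,\nabla v\cdot\nabla\varphi - \int\int\varphi\tilde f(v)\,g_\eps.
$$
The last three terms already fit the $T_2+T_3$ patterns. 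The remaining cross term is split via Young's inequality into $\tfrac{1}{4\delta}I+\delta\int\int\varphi f^2(v)$, and the hypothesis $f(v)\le p\,\tilde f(v)$ (equivalent to the second half of \eqref{S-LiouvParabHypBV}) gives $f^2(v)\le p\,f(v)\tilde f(v)$, so that a fraction of $L$ can be absorbed on the left provided the coefficient on $L$ produced by the Bochner step is strictly larger than the Young penalty $\tfrac{p}{2}$.

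\emph{Bochner step.} Apply the identity $\tfrac12\Delta|\nabla v|^2=|D^2 v|^2+\nabla v\cdot\nabla(\Delta v)$, multiply by $\varphi$, integrate, and substitute $\Delta v=v_t-f(v)-g_\eps$. Integration by parts of the Laplacian on the left produces $\int\int|\nabla v|^2\Delta\varphi$ (a $T_1$ term), while $\nabla v\cdot\nabla v_t=\tfrac12\partial_t|\nabla v|^2$ yields $-\tfrac12\int\int\varphi_t|\nabla v|^2$, and $\nabla v\cdot\nabla f(v)=\mathrm{div}(f(v)\nabla v)-f(v)\Delta v$ generates both $\int\int\varphi f^2(v)$ (via the equation) and, after a further integration by parts, the $F(v)|\Delta\varphi|$ term of $T_2$. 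To extract $\int\int\varphi v^{-2}|\nabla v|^4$, one invokes the refined Newton--Kato inequality
$$
|D^2 v|^2 \ge \tfrac{1}{n-1}(\Delta v)^2 + \tfrac{n}{n-1}\Bigl[\tfrac{\nabla v\cdot D^2 v\,\nabla v}{|\nabla v|^2}-\tfrac{\Delta v}{n}\Bigr]^2
$$
(valid for $n\ge 2$; for $n=1$ one has simply $|D^2 v|^2=(\Delta v)^2$), together with $\nabla v\cdot D^2 v\,\nabla v=\tfrac12\nabla v\cdot\nabla|\nabla v|^2$; equivalently, one tests Bochner against $\varphi v^{-1}$, whose Laplacian $\Delta(\varphi v^{-1})$ generates directly a $v^{-3}|\nabla v|^4$ contribution, which the equation then converts into $v^{-2}|\nabla v|^4$.

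\emph{Main obstacle.} The principal difficulty is coefficient bookkeeping: the two identities generate many cross terms (products of $v_t$, $v^{-1}|\nabla v|^2$, $f(v)$, $g_\eps$, and derivatives of $\varphi$) which must each be matched with an entry of $T_1$, $T_2$, or $T_3$. Crucially, after summing the two identities with compatible weights and absorbing $\tfrac{p}{2}\int\int\varphi f(v)\tilde f(v)$ on the left, the residual coefficients on both $I$ and $L$ must remain strictly positive. This is precisely the point where the constant $\tfrac{1}{n-1}$ from the refined Newton--Kato inequality has to beat the Young penalty, pinning down the range $p<p_B$ in \eqref{S-LiouvParabHypBV}.
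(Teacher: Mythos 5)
Your plan assembles the right ingredients in roughly the right order: an integral identity from testing against $\tilde f(v)\varphi$ (which is correct and is exactly the paper's computation converting $J=\int\int\varphi v^{-1}|\nabla v|^2\Delta v$ into $-L$ plus lower-order terms), a Bochner-plus-Newton--Kato inequality to extract $I$ and $K$, substitution of $\Delta v = v_t - f(v)-g_\eps$, and the hypothesis $f\le p\tilde f$ to control $\int\int\varphi f^2(v)$ by $pL$. Your reading of where $p<p_B$ enters is also exactly right.

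However, the actual proof does not use the parabolic Bochner identity. It applies Lemma~\ref{S-lemLiouvGS1} --- the purely elliptic integral inequality from \cite{BVV}, which is the packaged form of Bochner plus the optimal Newton/Kato matrix inequality, with weight $q=0$ and a free parameter $k$ --- at each fixed time $t$, integrates in $t$, and only then substitutes the equation into the resulting $J$ and $K$ integrals. Because the equation is substituted only at the level of $\Delta v$, never at the level of $\nabla(\Delta v)$, no term $\int\int\varphi_t|\nabla v|^2$ is ever created. Your plan, by contrast, writes $\nabla v\cdot\nabla v_t=\tfrac12\partial_t|\nabla v|^2$ and integrates by parts in $t$, which produces precisely such a term; since $|\nabla v|^2|\varphi_t|$ does not appear on the right-hand side of \eqref{S-LiouvBVestimIK}, your route proves a strictly weaker inequality than the one stated. (That weaker inequality would in fact still suffice for Theorem~\ref{S-thmLiouvJLV1}, because Step~2 of its proof controls $\int\int|\nabla v|^2|\varphi_t|$ anyway, but it is not the lemma you set out to prove.) The fix is to integrate $\int\int\varphi\nabla v\cdot\nabla(\Delta v)$ by parts in $x$ first, giving $-\int\int\varphi(\Delta v)^2-\int\int(\Delta v)\,\nabla\varphi\cdot\nabla v$, and substitute the equation afterwards.

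Two further bookkeeping remarks. First, your Young-split of the cross term $\int\int\varphi v^{-1}f(v)|\nabla v|^2$ into $\tfrac{1}{4\delta}I+\delta\int\int\varphi f^2(v)$ is avoidable and makes the bookkeeping harder than it needs to be: this cross term is exactly the $f(v)$-part of $J$, and in Lemma~\ref{S-lemLiouvGS1} the whole of $J$ carries the tunable coefficient $\tfrac{n+2}{n}k$; choosing $k<0$ with $\tfrac{(n-1)p}{n+2}<-k<\tfrac{n}{n-1}$ (nonempty precisely because $p<p_B$) makes the coefficients of $I$ and of $L$ positive simultaneously, with no separate $\delta$ to tune. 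Second, the $K$ term also produces $\int\int F(v)\varphi_t$ (from $-2\int\int\varphi f(v)v_t=2\int\int F(v)\varphi_t$), which does not appear on the right of \eqref{S-LiouvBVestimIK} and must be absorbed into $\tilde F(v)|\varphi_t|$ via $F\le p\tilde F$ --- another consequence of \eqref{S-LiouvParabHypBV} worth flagging explicitly, since your write-up mentions only the $\tilde F$ contribution from the testing identity.
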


\begin{rem} \label{remproofeps} 
The use of $u+\eps$ in Lemma~\ref{S-lemLiouvBV3}, instead of $u$, 
will generate some technical complication
(to handle the additional perturbation terms $g_\eps$).
However, in order to rule out the existence of nontrivial solutions of $u_t-\Delta u=f(u)$
in $Q=\Rn\times\R$,
and not only of positive solutions,
this procedure seems necessary here, 
unlike in the special case $f(s)=s^p$.

Recall (cf.~\cite[Remarks 21.3 and 26.10(i)]{QSb}) that in the case $f(s)=s^p$: (a) nonexistence of positive solutions
 in $Q$ yields 
nonexistence of nontrivial {\it bounded} solutions, since such solutions must be positive 
as a consequence of the maximum principle (see \eqref{DoublingA3b} and after \eqref{DoublingA3b2});
(b) then, by rescaling-doubling arguments, this guarantees universal estimates of the form \eqref{EstimA},
and the latter imply the nonexistence of nontrivial solutions in $Q$.

Such a procedure does not apply in general under the assumptions 
of Theorems~\ref{S-thmLiouvJLV2} and~\ref{S-thmLiouvJLV1},
since step (a) requires $f(s)/s$ to be bounded near $s=0$ and step (b) requires
additional assumptions to derive universal estimates (cf.~Theorem~\ref{thm1}). \end{rem}

In view of the proof of Lemma~\ref{S-lemLiouvBV3}, we recall the following lemma from 
\cite{BVV} (see also \cite[Lemma~8.9]{QSb}),
which provides a family of integral
estimates relating any function with its gradient and its Laplacian
(the result is given there for $C^2$ functions but it immediately extends by density).

\begin{lem} \label{S-lemLiouvGS1}
Let $\Omega$ be an arbitrary domain in $\Rn$,
$0\leq \varphi\in \mathcal{D}(\Omega)$, and let
$v\in W^{2,r}_{loc}(\Omega)$ for all finite $r$, with $v>0$.
Fix $q\in \R$ and denote
\be{S-LiouvGSestIJK0}
I_q=\int \varphi\,v^{q-2}|\nabla v|^4,\quad
J_q=\int \varphi\,v^{q-1}|\nabla v|^2 \Delta v,\quad
K_q=\int \varphi\,v^q(\Delta v)^2,
\ee
{\cred where $\int=\int_\Omega$.}
Then, for any $k\in \R$ with $k\neq -1$, there holds
\be{S-LiouvGSestIJK}
\alpha I_q+\beta J_q+\gamma K_q\leq
{1\over 2}  \int \,v^q|\nabla v|^2\Delta \varphi
+ \int v^q\bigl[\Delta v+(q-k)v^{-1}
|\nabla v|^2\bigr]\nabla v\cdot\nabla\varphi,
\ee  
where
$$\alpha=-{n-1\over n}k^2+(q-1)k-{q(q-1)\over 2}, \quad
\beta={n+2\over n}k-{3q\over 2}, \quad
\gamma=-{n-1\over n}.
$$
\end{lem}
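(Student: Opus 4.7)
The plan is to combine two ingredients: (a) Lemma~\ref{S-lemLiouvGS1}, applied to $v=u+\eps$ at fixed time $t$ with $q=0$ and then integrated over $t\in(-T,T)$; and (b) the identity obtained by multiplying the PDE $v_t-\Delta v=f(v)+g_\eps$ by $\varphi\tilde f(v)$ and integrating. The first produces $\alpha I$ on the left together with the sign-awkward companions $-\beta\iint\varphi v^{-1}|\nabla v|^2 f(v)$ and $\gamma\iint\varphi f(v)^2$ once the equation is used to eliminate $\Delta v$. The second produces $L$ on the left. Coupling them removes the common cross-quantity $\iint\varphi v^{-1}|\nabla v|^2 f(v)$, and the standing bound $f\le p\tilde f$ from \eqref{S-LiouvParabHypBV} lets one control $\iint\varphi f(v)^2$ by $pL$.

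Concretely, choose $k\in(-n/(n-1),0)$ in Lemma~\ref{S-lemLiouvGS1} with $q=0$, so that $\alpha=-(n-1)k^2/n-k>0$, $\beta=(n+2)k/n<0$, and $\gamma=-(n-1)/n<0$. Integrating in time and substituting $\Delta v=v_t-f(v)-g_\eps$ into $J_0$, $K_0$ and into the right-hand side term $\iint\Delta v\,\nabla v\cdot\nabla\varphi$ yields an inequality whose principal left-hand side is
$$\alpha I-\beta\iint\varphi v^{-1}|\nabla v|^2 f(v)+\gamma\iint\varphi f(v)^2.$$
The $v_t$- and $g_\eps$-expansions of $J_0$ and $K_0$ are bounded by Young's inequality and produce the $(v_t)^2$ and $|v_t|v^{-1}|\nabla v|^2$ pieces of $T_1$ and the $g_\eps$-pieces of $T_3$; the single exception is $-2\gamma\iint\varphi v_t f(v)$, which is handled by one time-IBP $\iint\varphi v_t f(v)=-\iint\varphi_t F(v)$ and absorbed into the $\tilde F(v)|\varphi_t|$ term of $T_2$ via $F\le p\tilde F$. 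The RHS of Lemma~\ref{S-lemLiouvGS1} contributes $|\nabla v|^2|\Delta\varphi|$ to $T_1$ and the $|\nabla v\cdot\nabla\varphi|$-weighted terms to $T_2$, including $F(v)|\Delta\varphi|$ after one spatial IBP of $\iint f(v)\nabla v\cdot\nabla\varphi=-\iint F(v)\Delta\varphi$.

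For the second ingredient, multiplying the PDE by $\varphi\tilde f(v)$ and using $\tilde f'(s)=f(s)/s$ together with one space-IBP and one time-IBP yields the exact identity
$$\iint\varphi v^{-1}|\nabla v|^2 f(v)=L+\iint\varphi_t\tilde F(v)-\iint\tilde f(v)\nabla\varphi\cdot\nabla v+\iint\varphi\tilde f(v) g_\eps,$$
whose error terms land in $T_2$ and $T_3$. Substituting this into the first estimate converts $-\beta\iint\varphi v^{-1}|\nabla v|^2 f(v)$ into $|\beta| L$ on the left. The remaining term $\gamma\iint\varphi f(v)^2$ is dealt with using $f(v)^2\le p f(v)\tilde f(v)$ (pointwise, from $f\le p\tilde f$), which gives $\iint\varphi f(v)^2\le pL$; after absorption, the coefficient of $L$ on the left-hand side becomes $|\beta|-p|\gamma|=[(n+2)|k|-p(n-1)]/n$. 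The key step is this coefficient balance: the Bochner-forced sign $\gamma<0$ in dimensions $n\ge 2$ must be beaten by the gain $|\beta|$, and the supremum of $|\beta|/|\gamma|$ as $k$ ranges over the admissible interval $(-n/(n-1),0)$ is exactly $p_B=n(n+2)/(n-1)^2$; hence $k$ can be chosen to make the coefficient of $L$ strictly positive precisely when $p<p_B$, which is assumption~\eqref{S-LiouvParabHypBV}. For $n=1$, $\gamma=0$ and this obstruction disappears outright. The conclusion $I+L\le C(T_1+T_2+T_3)$ then follows by routine Young-inequality bookkeeping to split the remaining cross terms into $T_1,T_2,T_3$.
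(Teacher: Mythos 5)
Your proposal does not prove the statement in question. The statement is Lemma~\ref{S-lemLiouvGS1} itself: a purely elliptic, PDE-free integral inequality for an arbitrary positive $W^{2,r}_{loc}$ function $v$ on a domain $\Omega\subset\Rn$, asserting that the specific combination $\alpha I_q+\beta J_q+\gamma K_q$, with the stated coefficients depending on $n$, $q$, $k$, is controlled by boundary-type terms involving $\Delta\varphi$ and $\nabla\varphi$. Your argument instead takes this lemma as its first ingredient (``(a) Lemma~\ref{S-lemLiouvGS1}, applied to $v=u+\eps$ at fixed time...'') and then derives the parabolic gradient estimate of Lemma~\ref{S-lemLiouvBV3}, i.e.\ the bound $I+L\le T_1+T_2+T_3$, including the choice of $k$ with $\frac{(n-1)p}{n+2}<-k<\frac{n}{n-1}$ and the role of $p<p_B$. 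That is essentially the paper's proof of Lemma~\ref{S-lemLiouvBV3} (and your coefficient bookkeeping there is broadly in the right spirit), but with respect to the statement actually posed it is circular: you assume exactly what you are asked to prove, and nothing in your text derives the coefficients $\alpha=-\frac{n-1}{n}k^2+(q-1)k-\frac{q(q-1)}{2}$, $\beta=\frac{n+2}{n}k-\frac{3q}{2}$, $\gamma=-\frac{n-1}{n}$, nor explains where the dimensional constant $\frac{n-1}{n}$ comes from.

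A genuine proof of Lemma~\ref{S-lemLiouvGS1} has different ingredients: one starts from Bochner's formula $\tfrac12\Delta|\nabla v|^2=|D^2v|^2+\nabla v\cdot\nabla\Delta v$ together with the Cauchy--Schwarz inequality $|D^2v|^2\ge\frac1n(\Delta v)^2$ (this is the source of $\gamma=-\frac{n-1}{n}$ and of the quadratic-in-$k$ coefficient in $\alpha$), multiplies by the weight $\varphi\,v^q$ (the parameter $k$ enters through writing $\Delta v$-terms as divergences of $v^{k}$-weighted vector fields, or equivalently through the identity for $\operatorname{div}(v^{q}|\nabla v|^2\nabla v)$ and $\operatorname{div}(v^{q}\Delta v\,\nabla v)$), and integrates by parts repeatedly so that all interior terms are collected into $I_q$, $J_q$, $K_q$ and all terms carrying derivatives of $\varphi$ form the right-hand side of \eqref{S-LiouvGSestIJK}. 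One must also note that the computation is first justified for $v\in C^3$ or $C^2$ and extends to $W^{2,r}_{loc}$ by density, since Bochner's identity involves third derivatives in intermediate steps. The paper does not reprove this lemma; it quotes it from \cite{BVV} (see also \cite[Lemma~8.9]{QSb}) with precisely that density remark. So the gap is not a fixable detail in your write-up: the entire argument for the stated inequality is missing, and the material you did write belongs to the proof of Lemma~\ref{S-lemLiouvBV3}, one level up.
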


\begin{proof}[Proof of Lemma~\ref{S-lemLiouvBV3}]
We apply Lemma~\ref{S-lemLiouvGS1} with $q=0$
 to $v=v(\cdot,t)$ and $\varphi=\varphi(\cdot,t)$ for a.e.~$t$, 
and integrate the corresponding inequality \eqref{S-LiouvGSestIJK} in time. 
Denoting $J={\cred\int_{-T}^T}J_0$, $K={\cred\int_{-T}^T}K_0$, this gives us
\be{S-LiouvBVestIJK}
\begin{aligned}
-\Bigl({n-1\over n}k&+1\Bigr)k I+{n+2\over n}k J -{n-1\over n}K  \\
&\le  {1\over 2}  \int\int \,|\nabla v|^2\Delta \varphi
+ \int\int \bigl[\Delta v-kv^{-1} |\nabla v|^2\bigr]\nabla v\cdot\nabla\varphi.
\end{aligned}
\ee  
Set
$$L^*=\int\int \varphi\,f^2(v).$$
Using 
\be{S-LiouvBVestIJK0}
-\Delta v=f(v)+g_\eps-v_t
\ee
 and integrating by parts
in $t$ and/or in $x$, we obtain
$$\begin{aligned}
K
&=\int\int \varphi\, (v_t)^{2}+\int\int \varphi\,(f(v)+g_\eps)^2-2\int\int 
\varphi\,(f(v)+g_\eps)v_t \\
&=\int\int \varphi\, (v_t)^{2}+L^*+2\int\int F(v)\varphi_t+\int\int \varphi\,(2f(v)+g_\eps-2v_t)g_\eps
\end{aligned}$$
and
$$\begin{aligned}
J
&=-\int\int\varphi\,|\nabla v|^2{f(v)\over v}
+\int\int \varphi\,(v_t-g_\eps)\,{|\nabla v|^2\over v} \\
&=-\int\int \varphi\,\nabla v\cdot\nabla (\tilde f(v))
+\int\int \varphi\,(v_t-g_\eps)\,{|\nabla v|^2\over v} \\ 
&=\int\int \varphi\,(\Delta v)\tilde f(v)
+\int\int (\nabla\varphi\cdot\nabla v)\tilde f(v) +\int\int 
\varphi\,(v_t-g_\eps)\,{|\nabla v|^2\over v} \\
&=-{\cred L} 
-\int\int \tilde F(v)\varphi_t 
+\int\int (\nabla\varphi\cdot\nabla v)\tilde f(v)
+\int\int \varphi\,(v_t-g_\eps)\,{|\nabla v|^2\over v}-\int\int \varphi\,g_\eps\tilde f(v).
\end{aligned}$$
{\cred 
Substituting the above expressions of $K, J$ in \eqref{S-LiouvBVestIJK}
and also replacing $\Delta v$  in \eqref{S-LiouvBVestIJK} via}~\eqref{S-LiouvBVestIJK0}, we obtain
$$\begin{aligned}
&-\Bigl({n-1\over n}k+1\Bigr)k I-{n+2\over n}kL -{n-1\over n}L^* \\
&\leq {1\over 2}  \int\int \,|\nabla v|^2\Delta \varphi
{\cred \ - \int\int f(v)\nabla v\cdot\nabla\varphi}
+ \int\int \bigl[{\cred v_t-g_\eps}-kv^{-1} |\nabla v|^2\bigr]\nabla v\cdot\nabla\varphi \\
&\ \ +{n+2\over n}k\Bigl\{\int\int \tilde F(v)\varphi_t
-\int\int (\nabla\varphi\cdot\nabla v)\tilde f(v)
-\int\int \varphi\,(v_t-g_\eps)\,{|\nabla v|^2\over v}+\int\int \varphi\,g_\eps\tilde f(v)\Bigr\}\\
&\ \ +{n-1\over n}\Bigl\{\int\int \varphi\, (v_t)^{2}+2\int\int F(v)\varphi_t
+\int\int \varphi\,(2f(v)+g_\eps-2v_t)g_\eps\Bigr\}.
\end{aligned}$$
{\cred Next integrating by parts in space the second term of the RHS, and}
using $f\le p\tilde f$ and $F\le p\tilde F$ {\cred in case $n\ge 2$, we obtain}
$$\begin{aligned}
&-\Bigl({n-1\over n}k+1\Bigr)k I-{n+2\over n}kL -{n-1\over n}L^* \\
&\leq  \int\int \ \Bigl(\frac12|\nabla v|^2{\cred\ +F(v)\Bigr)}\Delta \varphi
+ \bigl[|g_\eps|+|v_t|+|k|v^{-1} |\nabla v|^2\bigr] |\nabla v\cdot\nabla\varphi| \\
&\ \ +C(n,k)\int\int \tilde F(v)|\varphi_t|
+ |\nabla\varphi\cdot\nabla v|\tilde f(v)
+\varphi\,(|v_t|+|g_\eps|)\,{|\nabla v|^2\over v}+\varphi\,|g_\eps|\tilde f(v) \\ 
&\ \ +C(n) \int\int \varphi\, [(v_t)^{2} 
+{\cred g_\eps^2}].
\end{aligned}$$
If $n=1$, the choice $k={\cred -2}$ 
 yields \eqref{S-LiouvBVestimIK}.
If $n\ge 2$, by assumption \eqref{S-LiouvParabHypBV}, we have
$$-{n+2\over n}kL -{n-1\over n}L^*\ge 
{n+2\over n}\Bigl(-k -{(n-1)p\over n+2}\Bigr)L{\cb .}$$
Since $p<n(n+2)/(n-1)^2$, we may choose $k<0$, {\cred $k\ne -1$,} such that
$${(n-1)p\over n+2}<-k<{n\over n-1}$$
and \eqref{S-LiouvBVestimIK} follows.
\end{proof}

We shall need the following technical lemma, 
which provides some useful growth properties of $f$.

\begin{lem} \label{lemgrowth}
Let $i\in\{1,2\}$. Under the assumptions of Theorem~\ref{S-thmLiouvJLV1}, 
there exists $c>0$ such that
\be{growthf1}
f(s)\tilde f(s)\ge c s^{2\gamma_i},\ s\in\omega_i,\ \hbox{ for some 
$1<\gamma_2<\gamma_1<p_S$, with $n\ge 2$ if $i=1$,}
\ee
and
\be{growthf2}
f(s) \tilde f(s)\ge c {\cred (F(s)+\tilde F(s))}^{d_i},\ s\in\omega_i,\ \hbox{ for some $1<d_2<d_1<(n+2)/n$.}  
\ee

\end{lem}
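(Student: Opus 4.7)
The plan is to exploit the differential inequality
\[
\tilde f'(s) = \frac{f(s)}{s} \ge c\, s^{m_i - 1}\, \tilde f^{2m_i - 1}(s), \qquad s \in \omega_i,
\]
obtained by combining \eqref{Qbddunbddnonc} with the defining relation of $\tilde f$.
One always has $m_i<1$: for $n \ge 2$ this is forced by $m_i < m^* = 2(n+2)/(3n+2) \le 1$,
while for $n = 1$ the case $m_i \ge 1$ is inconsistent with the fact that $\tilde f$ is a finite-valued
nondecreasing function on $[0,\infty)$ (the ODE would then drive the positive quantity $\tilde f^{2-2m_i}$,
of negative exponent, below zero for $s$ large), and hence renders the hypothesis vacuous.
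Accordingly, I would rewrite the inequality as
$\tfrac{d}{ds}\!\left[\tilde f^{2-2m_i}(s)/(2-2m_i)\right] \ge c\, s^{m_i-1}$
and integrate from $0$ to $s$ for $i=1$ (using $\tilde f(0^+) = 0$, which comes from
\eqref{S-LiouvParabHypInt}), and from $1$ to $s$ for $i=2$, to obtain the key lower bound
\begin{equation} \label{eqplantildef}
\tilde f(s) \ge c_1\, s^{m_i/(2 - 2m_i)}, \qquad s \in \omega_i.
\end{equation}

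For \eqref{growthf1}, I would multiply \eqref{Qbddunbddnonc} by $\tilde f(s)$ and insert
\eqref{eqplantildef}, yielding
$f(s)\tilde f(s) \ge c\, s^{m_i}\, \tilde f^{2m_i}(s) \ge c_2\, s^{m_i/(1-m_i)}$,
so that \eqref{growthf1} holds with $\gamma_i := m_i/[2(1-m_i)]$. The three conditions
$\gamma_i > 1$, $\gamma_1 < p_S$, and $\gamma_2 < \gamma_1$ will amount respectively to
$m_i > 2/3$, to $m_1 < m^*$ (via the algebraic identity $m^*/(2-2m^*) = (n+2)/(n-2) = p_S$
for $n \ge 3$, the Sobolev condition being vacuous for $n \le 2$), and to $m_2 < m_1$
combined with the monotonicity of $m \mapsto m/(2-2m)$ on $(0,1)$; all three are granted by hypothesis.

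For \eqref{growthf2}, the pivotal step is the identity
\[
F(s) + \tilde F(s) = s\, \tilde f(s),
\]
obtained by writing $f(z) = z\, \tilde f'(z)$ in the definition of $F$ and integrating by parts
(the boundary term at $0$ vanishes as $\tilde f(0^+) = 0$). This turns \eqref{growthf2}
into the pointwise bound $f(s) \ge c\, s^{d_i}\, \tilde f^{d_i-1}(s)$, which by \eqref{Qbddunbddnonc}
is in turn implied by $s^{m_i-d_i}\, \tilde f^{2m_i-d_i}(s) \ge c'$ on $\omega_i$.
I would choose $d_i = 2m_i/(2-m_i)$, for which a short calculation using \eqref{eqplantildef}
gives exactly $m_i - d_i + (2m_i - d_i)\, m_i/(2-2m_i) = 0$, so that the displayed quantity
is bounded below by a positive constant on each $\omega_i$. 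The remaining conditions
$1 < d_2 < d_1 < (n+2)/n$ then translate, under the strictly increasing map $m \mapsto 2m/(2-m)$,
back to $m_i \in (2/3, m^*)$ and $m_2 < m_1$, which are precisely our assumptions.

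The main technical point will be the careful bookkeeping of the exponents and verifying that
the threshold formulas $\gamma_i = m_i/[2(1-m_i)]$ and $d_i = 2m_i/(2-m_i)$ correspond exactly,
via elementary algebra, to the prescribed window $m_i \in (2/3, m^*)$; once the ODE-type lower bound
\eqref{eqplantildef} is in hand, everything else is essentially routine.
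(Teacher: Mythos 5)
Your strategy coincides with the paper's: integrate the ODE inequality $\tilde f'(s)=s^{-1}f(s)\ge c\,s^{m_i-1}\tilde f^{2m_i-1}(s)$ to obtain $\tilde f(s)\ge c_1 s^{m_i/(2(1-m_i))}$ on $\omega_i$, use the identity $F+\tilde F=s\tilde f$, and close via \eqref{Qbddunbddnonc}. Your explicit choices $\gamma_i=m_i/[2(1-m_i)]$ and $d_i=2m_i/(2-m_i)$ are precisely the boundary values the paper approaches (the paper's conditions such as $\frac{2m_1-d_1}{d_1-m_1}<\frac{2(1-m_1)}{m_1}$ become equalities for your $d_i$, which is harmless), and the exponent bookkeeping — $m^*/(2(1-m^*))=p_S$ for $n\ge 3$, $2m^*/(2-m^*)=(n+2)/n$ — is correct. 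So the two proofs are substantially the same, yours being cleaner by virtue of closed-form exponents.

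There is, however, a gap in your blanket claim that ``one always has $m_i<1$''. The ODE-blowup argument (driving $\tilde f^{2-2m_i}$ negative for $s$ large) does rule out $m_2>1$ for $i=2$, where $s$ ranges over $(1,\infty)$, but it says nothing when $m_2=1$ (the exponent $2-2m_2$ is $0$, not negative), and it says nothing for $i=1$ at all: on $\omega_1=(0,1]$ the quantity $\tilde f^{2-2m_1}$ blows up to $+\infty$ as $s\to 0^+$, so the one-sided bound from integrating yields no contradiction. In fact, for $n=1$ (where $m^*=6/5>1$) the hypotheses with $m_1>1>m_2$ are perfectly consistent (take $\tilde f$ linear on $(0,1]$ and exponential on $(1,\infty)$). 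The paper handles this differently: it only argues WLOG $m_2<1$ (decreasing $m_2$ preserves \eqref{Qbddunbddnonc} with $i=2$ since $s\tilde f^2(s)$ is bounded away from $0$ on $[1,\infty)$), and restricts $i=1$ to $n\ge 2$ in \eqref{growthf1}, so the issue surfaces only for \eqref{growthf2} with $i=1$, $n=1$, $m_1\ge 1$. In that case your intermediate bound \eqref{eqplantildef} is false (or meaningless), and moreover the exponent $2m_1-d_1=2m_1(1-m_1)/(2-m_1)$ to which you raise $\tilde f$ is $\le 0$, so the inequality would reverse under your argument. The repair is elementary and does not require \eqref{eqplantildef}: for $m_1\ge 1$, both $m_1-d_1=-m_1^2/(2-m_1)$ and $2m_1-d_1$ are $\le 0$, so on $(0,1]$ one has directly
\begin{equation*}
s^{m_1-d_1}\,\tilde f^{\,2m_1-d_1}(s)\ \ge\ 1\cdot \tilde f^{\,2m_1-d_1}(1)\ >\ 0,
\end{equation*}
and \eqref{growthf2} follows. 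With this patch, and with the WLOG reduction of $m_2$ below $1$ (rather than the vacuousness claim), your proof is complete and matches the paper's.
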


\begin{proof}
In the proof $c$ will denote a generic constant depending only on $f$.
First note that, by assumption,
\be{growthf0}
{2\over 3}<m_2<m_1<m^*<d^*:={n+2\over n},
\ee
and $m^*\le 1$ if $n\ge 2$. Also, in view of \eqref{Qbddunbddnonc} and
since $\tilde f$ is increasing, 
we may assume $m_2<1$ also when $n=1$, without loss of generality. 

To prove \eqref{growthf1}, we let $i\in\{1,2\}$, with $n\ge 2$ if $i=1$. By \eqref{Qbddunbddnonc}, we have $\tilde f'(s)=s^{-1}f(s)\ge cs^{m_i-1}\tilde f^{2m_i-1}$ i.e.,
$[\tilde f^{2(1-m_i)}]'(s)\ge cs^{m_i-1}$ for $s\in \omega_i$.
Integrating (and using $\tilde f(s)\ge \tilde f(1)>0$ for $s\ge 1$ in case $i=2$), we get
\be{growthf4}
\tilde f(s)\ge cs^{\frac{m_i}{2(1-m_i)}},\quad s\in\omega_i.
\ee
Applying \eqref{Qbddunbddnonc} again, we obtain
$$f(s)\tilde f(s)\ge cs^{m_i}\tilde f^{2m_i}(s)\ge 
cs^{m_i(1+\frac{m_i}{1-m_i})}=cs^{\frac{m_i}{1-m_i}},\quad s\in\omega_i.$$
By \eqref{growthf0}, we have $\frac{m_1}{2(1-m_1)}<p_S$ and $\frac{m_2}{2(1-m_2)}>1$.
Choosing $\gamma_1\in (\max\{1,\frac{m_1}{2(1-m_1)}\},p_S)$ and then
$\gamma_2\in (1,\min\{\gamma_1,\frac{m_2}{2(1-m_2)}\})$ we obtain \eqref{growthf1}.

Next let $i\in\{1,2\}$, $n\ge 1$. Owing to \eqref{growthf0}, we may choose 
$d_1\in (\max\{1,m_1\},d^*)$ close to $d^*$ 
and then $d_2\in(1,\min\{d_1,4/3\})$ close to $1$ such that
$$\frac{2m_1-d_1}{d_1-m_1}<\frac{2(1-m_1)}{m_1}\quad\hbox{and}\quad
\frac{m_2}{2(1-m_2)}>\frac{d_2-m_2}{2m_2-d_2}.$$ 
It follows from \eqref{growthf4} (using the boundedness of $\tilde f$ on $(0,1)$) that
$$\tilde f^{\frac{2m_1-d_1}{d_1-m_1}}(s) \ge 
{\cred c} \tilde f^{\frac{2(1-m_1)}{m_1}}(s) \ge cs,\ \ 0<s<1
\quad\hbox{ and }\quad
\tilde f(s)\ge cs^{\frac{m_2}{2(1-m_2)}}\ge cs^{\frac{d_2-m_2}{2m_2-d_2}},\ \ s\ge 1.$$
Using \eqref{Qbddunbddnonc}, we deduce 
$$f(s)\tilde f(s) \ge cs^{m_i}\tilde f^{2m_i}(s)\ge c(s\tilde f(s))^{d_i}, \quad s\in\omega_i.$$ 
Since ${\cred \tilde F(s)+F(s)=\int_0^s (\tilde f(z)+z\tilde f'(z))dz=\int_0^s (z\tilde f)'dz=s\tilde f(s),}$
property \eqref{growthf2} follows.
\end{proof}

\begin{proof}[Proof of Theorem~\ref{S-thmLiouvJLV1}]
We split the proof into several steps.
\medskip

{\bf Step 1.} {\it Notation and choice of test-functions.}
Let $u$ be a strong solution of \eqref{S-LiouvBVeqnf}.
Fix $\eps\in(0,1)$, $R>1$ and set $v=u+\eps$.
We also fix $\eta\in(0,1)$. 
In what follows $C_\eta$ denotes a generic positive constant depending on $\eta$
but independent of~$\eps, R$.

We shall estimate the
terms on the RHS of \eqref{S-LiouvBVestimIK}.
In the rest of the proof, $\int\int$ will denote space-time integrals 
over $Q_R:=B_R\times (-R^2,R^2)$. We set 
$$Q_R^1=\bigl\{(x,t)\in Q_R;\ v(x,t)\le 1\bigr\},\quad 
Q_R^2=\bigl\{(x,t)\in Q_R;\ v(x,t)>1\bigr\}.$$
Let us prepare a suitable test-function. We take $\xi\in 
\mathcal{D}(B_1\times (-1,1))$, such that $\xi=1$ in
$B_{1/2}\times (-1/2,1/2)$ and $0\leq \xi\leq 1$.
Due to $m_i>2/3$, we may fix $a$ such that 
$$\max\Bigl\{\frac{m_i+2}{4m_i}, \frac{1}{d_i}, \frac{\gamma_i+1}{2\gamma_i}\Bigr\}<a<1,\quad i\in\{1,2\},$$
with $d_i,\gamma_i>1$ given by Lemma~\ref{lemgrowth}.
By taking $\varphi(x,t)=\varphi_R(x,t)=\xi^b(R^{-1}x,R^{-2}t)$ with $b=b(a)>2$ sufficiently large, we have
\be{S-LiouvBVestimtest}
|\nabla\varphi_R|\leq CR^{-1}\varphi^a, \quad
|\Delta \varphi_R|+\varphi_R^{-1}|\nabla\varphi_R|^2+|\partial_t\varphi_R|\leq CR^{-2}\varphi^a
\ee 
(where $\varphi_R^{-1}|\nabla\varphi_R|^2$ is defined to be $0$ whenever $\varphi=0$).

\medskip

{\bf Step 2.} {\it Energy estimate.}
We shall control the local $L^2$ norm of the time derivative as follows:
\be{S-LiouvBVestimRHSprelim5}
\int\int \varphi\,(v_t)^2
\leq \eta (I+L)+C_\eta R^{-\theta}+C\int\int |g_\eps|^2\varphi,
\ee
for some $\theta>0$ (depending only on $n$ and $f$).

In view of the proof of \eqref{S-LiouvBVestimRHSprelim5}, we first claim that
\be{S-LiouvBVestimRHSprelim}
\int\int |\nabla v|^2
\bigl(|\Delta \varphi|+\varphi^{-1}|\nabla\varphi|^2+|\varphi_t|\bigr)
\leq\eta (I+L)+C_\eta R^{-\theta_1},
\ee
where $\theta_1=1$ if $n=1$ and 
$\theta_1=\frac{4\gamma_1}{\gamma_1-1}-n-2$ 
($>0$ owing to $\gamma_1<p_S$) if $n\ge 2$.
To this end, we first write
\be{S-LiouvBVestimRHSprelim0}
|\nabla v|^2
\bigl(|\Delta \varphi|+\varphi^{-1}|\nabla\varphi|^2 +|\varphi_t|\bigr)
\leq \eta\varphi\, v^{-2}|\nabla v|^4
+C_\eta \varphi^{-1}v^2(|\Delta \varphi|+\varphi^{-1}|\nabla\varphi|^2
+|\varphi_t|\bigr)^2.
\ee
Let $i=1$ and $n\ge 2$, or $i=2$.
It follows from Young's inequality and \eqref{S-LiouvBVestimtest} that, for all $(x,t)\in Q_R^i$, 
$$\begin{aligned}
|\nabla v|^2
\bigl(|\Delta \varphi|&+\varphi^{-1}|\nabla\varphi|^2 +|\varphi_t|\bigr) \\
&\leq \eta \varphi\,  v^{-2}|\nabla v|^4+\eta\varphi\,v^{2\gamma_i}
+C_\eta\bigl[\varphi^{-(\gamma_i+1)/\gamma_i}(|\Delta \varphi|+\varphi^{-1}|\nabla\varphi|^2
+|\varphi_t|\bigr)^2\bigr]^{\gamma_i/(\gamma_i-1)}\\
&\leq \eta \varphi\,  v^{-2}|\nabla v|^4+\eta\varphi\,v^{2\gamma_i}
+C_\eta R^{-4\gamma_i/(\gamma_i-1)}
\end{aligned} $$
hence, by \eqref{growthf1}, 
\be{S-LiouvBVestimRHSprelim1}
|\nabla v|^2
\bigl(|\Delta \varphi|+\varphi^{-1}|\nabla\varphi|^2 +|\varphi_t|\bigr) \\
\le \eta \varphi\,  v^{-2}|\nabla v|^4+\eta\varphi\,f(v)\tilde f(v)
+C_\eta R^{-4\gamma_i/(\gamma_i-1)}. 
\ee
Next, for all $(x,t)\in Q_R^1$, \eqref{S-LiouvBVestimRHSprelim0} also yields
\be{S-LiouvBVestimRHSprelim2}
|\nabla v|^2
\bigl(|\Delta \varphi|+\varphi^{-1}|\nabla\varphi|^2 +|\varphi_t|\bigr)
\leq \eta \varphi\,  v^{-2}|\nabla v|^4+C_\eta R^{-4}
\ee
(using 
{\cred $a>1/2$}). Inequality \eqref{S-LiouvBVestimRHSprelim} then follows from \eqref{S-LiouvBVestimRHSprelim1} if $n\ge 2$ and from \eqref{S-LiouvBVestimRHSprelim2} and \eqref{S-LiouvBVestimRHSprelim1} with $i=2$ if~$n=1$
{\cred (recalling $R>1$).}

{\cred On the other hand,} for $i\in\{1,2\}$ and all $(x,t)\in Q_R^i$, we have
$$\begin{aligned}
{\cred (\tilde F(v) + F(v))(|\varphi_t| + |\Delta\varphi|)\ }
&{= \cred (F(v)+\tilde F(v))}\varphi^{1/d_i}\varphi^{-1/d_i}  (|\varphi_t|{\cred\ + |\Delta\varphi|)}\\
&\le \eta \varphi {\cred (F(v)+\tilde F(v))}^{d_i}+
C_\eta \bigl((|\varphi_t|{\cred\ + |\Delta\varphi|)}\varphi^{-1/d_i}\bigr)^{d_i/(d_i-1)}
\end{aligned} $$
hence, by \eqref{growthf2}, \eqref{S-LiouvBVestimtest},
\be{S-LiouvBVestimRHSprelim2b}
\int \int {\cred (\tilde F(v) + F(v))(|\varphi_t| + |\Delta\varphi|)\ }
\le \eta L+C_\eta R^{n+2-2d_1/(d_1-1)}.
\ee

Let us next prove \eqref{S-LiouvBVestimRHSprelim5}.
We multiply equation \eqref{S-LiouvBVestIJK0}
by $\varphi\,v_t $ and integrate by parts in $x$ and
$t$. Using Young's inequality and  \eqref{S-LiouvBVestimtest}, we get
$$\begin{aligned}
\int\int &\varphi\, (v_t)^2
 =\int\int\varphi\,\partial_t\Big(F(v)-{|\nabla 
v|^2\over 2}\Bigr)-(\nabla\varphi \cdot\nabla v) v_t+g_\eps\varphi v_t\\
&=\int\int\Bigl({|\nabla v|^2\over 2}-F(v)\Bigr)\varphi_t
-(\nabla\varphi \cdot\nabla v) v_t +g_\eps\varphi v_t\\
&\leq\int\int |\nabla v|^2
\Bigl(\frac12|\varphi_t|+|\nabla\varphi|^2\varphi^{-1}\Bigr)+F(v) |\varphi_t|+|g_\eps|^2\varphi
+{1\over 2}\int\int\varphi\,(v_t)^2,
\end{aligned} $$
hence
\be{S-LiouvBVestimRHSprelim2a}
\int\int \varphi\,(v_t)^2
\leq \int\int |\nabla v|^2 \bigl(|\varphi_t|+2|\nabla\varphi|^2\varphi^{-1}\bigr)+2F(v) |\varphi_t|
+2|g_\eps|^2\varphi.
\ee
Letting $\theta=\min\{\theta_1,\frac{2d_1}{d_1-1}-n-2\}>0$ owing to $d_1<(n+2)/n$,
we deduce \eqref{S-LiouvBVestimRHSprelim5} from 
\eqref{S-LiouvBVestimRHSprelim},
\eqref{S-LiouvBVestimRHSprelim2b} and \eqref{S-LiouvBVestimRHSprelim2a}.
\medskip

{\bf Step 3.} {\it Gradient estimate.}
Using Young's inequality, \eqref{Qbddunbddnonc} and \eqref{S-LiouvBVestimtest},
for $i\in\{1,2\}$ and $(x,t)\in Q_R^i$, we have
$$\begin{aligned}
\tilde f(v)|\nabla v\cdot\nabla\varphi|
&\le \varphi^{1/4}v^{-1/2}|\nabla v|\varphi^{-1/4} v^{1/2} \tilde f(v)\,|\nabla\varphi| \\
&\le \eta \varphi v^{-2}|\nabla v|^4+C_\eta\bigl[v^{1/2} \tilde f(v)\,\varphi^{-1/4}|\nabla\varphi|\bigr]^{4/3} \\
&\le \eta \varphi v^{-2}|\nabla v|^4+C_\eta\varphi^{2/(3m_i)}(v^{1/2} \tilde f(v))^{4/3}\,\bigl(\varphi^{-(m_i+2)/(4m_i)}|\nabla\varphi|\bigr)^{4/3} \\
&\le \eta \varphi v^{-2}|\nabla v|^4+\eta \varphi[v^{1/2} \tilde f(v)]^{2m_i}
+C_\eta\bigl(\varphi^{-(m_i+2)/(4m_i)}|\nabla\varphi|\bigr)^{4m_i/(3m_i-2)}\\
&\le \eta C\varphi v^{-2}|\nabla v|^4+\eta C\varphi f(v)\tilde f(v)
+C_\eta R^{-4m_i/(3m_i-2)},
\end{aligned} $$
hence
\be{S-LiouvBVestimRHSprelim2c}
\int\int \tilde f(v)|\nabla v\cdot\nabla\varphi|
\le \eta (I+L)+C_\eta R^{-\theta_2}, 
\ee
with $\theta_2=\frac{4m_1}{3m_1-2}-n-2>0$ by our assumption.
Writing also
$$v^{-1} |\nabla v|^2|\nabla v\cdot\nabla\varphi|
\le v^{-1} |\nabla v|^2(\eta\varphi v^{-1}|\nabla v|^2+C_\eta v\varphi^{-1}|\nabla\varphi|^2)
= \eta\varphi\frac{|\nabla v|^4}{v^2}+C_\eta |\nabla v|^2\frac{|\nabla\varphi|^2}{\varphi}$$
and using \eqref{S-LiouvBVestimRHSprelim5}, 
\eqref{S-LiouvBVestimRHSprelim} (applied with $\tilde\eta=\eta/C_\eta$), we get
$$ 
\begin{aligned}
\int\int & \varphi\,|v_t|\,v^{-1}|\nabla v|^2+\bigl(|v_t|+v^{-1} |\nabla v|^2+|g_\eps|\bigr)|\nabla v\cdot\nabla\varphi|\\
&\leq 2 \eta I 
+C_\eta \int\bigl[\varphi (v_t)^2 +\varphi g_\eps^2+\varphi^{-1}|\nabla v|^2|\nabla\varphi|^2] \\
&\leq 4\eta (I+L)+C_\eta\int\int \varphi g_\eps^2+C_\eta R^{-\theta}.
\end{aligned} 
$$  
Combining this with \eqref{S-LiouvBVestimRHSprelim5}, \eqref{S-LiouvBVestimRHSprelim}, 
and with \eqref{S-LiouvBVestimRHSprelim2b}, \eqref{S-LiouvBVestimRHSprelim2c}, respectively, we obtain
\be{S-LiouvBVestimRHS-T1}
T_1\le \eta (I+L)+C_\eta \int\int \varphi g_\eps^2+C_\eta R^{-\theta}
+C_\eta R^{-\theta_1},
\ee
\be{S-LiouvBVestimRHS-T2}
T_2\le \eta (I+L)+C_\eta \int\int \varphi g_\eps^2+C_\eta R^{-\theta}
+C_\eta R^{-\theta_2}
\ee
(where the $T_i$ are defined in \eqref{S-LiouvBVestimIK}).
Moreover, {\cred we have}
\be{S-LiouvBVestimRHS-T3}
\begin{aligned}
T_3\le \eta{\cred I} 
+C_\eta\int\int \varphi g^2_\eps+\varphi \tilde f(v)|g_\eps|.
\end{aligned} 
\ee

{\bf Step 4.} {\it Conclusion.}
Now, combining 
\eqref{S-LiouvBVestimIK} with \eqref{S-LiouvBVestimRHS-T1}-\eqref{S-LiouvBVestimRHS-T3}
and taking $\eta$ sufficiently small, it follows that
$$
\int\int_{Q_{R/2}} f(u+\eps)\tilde f(u+\eps)\le I+ L \le 
CR^{-\bar\theta}+C\int\int_{Q_R} g^2_\eps+\tilde f(v)|g_\eps|$$ 
(with $\bar\theta=\min(\theta,\theta_1,\theta_2)>0$ and $C>0$ independent of {\cred $\eps\in(0,1)$ and $R>1$}).
For fixed $R{\cred \ >1}$, since $u$ is bounded on $Q_R$ and $f, \tilde f$ are continuous on $[0,\infty)$ 
(recalling \eqref{S-LiouvParabHypInt}),
we have $\sup_{Q_R} |g_\eps|\to 0$ as $\eps\to 0$ 
as well as $\ds\sup_{\eps\in(0,1),\, (x,t)\in Q_R}\tilde f(v)<\infty$.
Letting $\eps\to 0$, we deduce that
$$\int\int_{Q_{R/2}} f(u)\tilde f(u)\le CR^{-\bar\theta}.$$
Letting $R\to \infty$, we conclude that $u\equiv 0$.
\end{proof}

\begin{proof}[Proof of Theorem~\ref{S-thmLiouvJLV2}] 
We shall check that the assumptions of Theorem~\ref{S-thmLiouvJLV1}
are satisfied.

We first note that \eqref{S-LiouvParabHypBV} is a consequence of \eqref{S-LiouvParabHyp1}.
Indeed, if $n\ge 2$, then for all $s>0$,
$$p\tilde f(s)=p\int_0^s z^{p-1}z^{-p}f(z)\,dz
\ge s^{-p}f(s) \int_0^s pz^{p-1}\,dz=f(s).$$

To check \eqref{Qbddunbddnonc}, we first consider the case of assumption \eqref{S-LiouvParabHyp1b}.
Since $f$ is a regularly varying function at $0$ 
 with index $b>1$, as a consequence of \eqref{SlowVarLog},
for all $\eps>0$, we have
\be{S-LiouvBVestimRHSprelim6}
c_\eps s^{b+\eps}\le f(s) \le C_\eps s^{b-\eps},\quad s\in(0,1],
\ee
hence
\be{S-LiouvBVestimRHSprelim7}
\tilde c_\eps s^{b+\eps}\le \tilde f(s) \le \tilde C_\eps s^{b-\eps},\quad s\in(0,1],
\ee
for some constants $c_\eps, C_\eps, \tilde c_\eps, \tilde C_\eps>0$. 
Moreover,  \eqref{S-LiouvParabHyp1} implies $b<p_B<p_S$.
Combining \eqref{S-LiouvBVestimRHSprelim6}-\eqref{S-LiouvBVestimRHSprelim7}, we get
$$f(s)\ge c_\eps s^{b+\eps}\ge c_\eps s^{(2m-1)(b-\eps)+m}\ge \hat c_\eps s^{m} \tilde f^{2m-1}(s),\quad s\in(0,1]$$
that is, \eqref{Qbddunbddnonc} with $i=1$, provided 
$b+\eps\le(2m-1)(b-\eps)+m$,
which is true for $\eps>0$ small whenever $b<(2m-1)b+m$,
i.e.~$m>2b/(2b+1)$.
Since $b<p_S$ implies $2b/(2b+1)<m^*=2(n+2)/(3n+2)$, this is true for $m<m^*$ close to $m^*$.
Similarly, since $f$ is a regularly varying function at $\infty$ with index $\bar b\in (1,p_c)$,
we get \eqref{Qbddunbddnonc} with $i=2$ by taking $\bar m\in(2/3,m)$ close to $2/3$.

Next consider the case of assumption \eqref{S-LiouvParabHyp3}.
There exist $s_2\ge s_1>0$ such that
$$\tilde f(s)=\int_0^s z^{\sigma-1}z^{-\sigma}f(z)\,dz
\le s^{-\sigma}f(s) \int_0^s z^{\sigma-1}\,dz=\sigma^{-1}f(s),\quad s\in (0,s_1),$$
and
$$
\begin{aligned}
\tilde f(s)
&=\int_0^{s_2} z^{-1}f(z)\,dz+\int_{s_2}^s z^{-1}f(z)\,dz \\
&\le C+f(s) \int_{s_2}^s z^{-1}\,dz=C+f(s)\log s\le C'f(s)\log s,\quad s\ge s_2.
\end{aligned}
$$
To show \eqref{Qbddunbddnonc}, it thus suffices to check that
\be{compfftilde2}
f^{1-m_1}(s)\ge cs^{m_1/2}\quad\hbox{ for all $s\in (0,s_1)$}, 
\ee
and
\be{compfftilde2b}
f^{1-m_2}(s)\ge cs^{m_2/2}(\log s)^{m_2-\frac12}\quad\hbox{ for all $s\ge s_2$}, 
\ee
with some ${2\over 3}<m_2<m_1<m^*$.
Taking $m_2\in(2/3,1)$ close to $2/3$,
inequality~\eqref{compfftilde2b} follows from assumption~\eqref{S-LiouvParabHyp3}.
For $n=1$, we may take $m_1=1$, hence \eqref{compfftilde2} is true.
Finally let $n\ge 2$. By \eqref{S-LiouvParabHyp1}, we then have $f(s)\ge cs^p$ for all $s\in\omega_1$, where $p<p_B<p_S$.
This guarantees \eqref{compfftilde2} for $m_1<m^*\le 1$ close to $m^*$.
\end{proof}

\section{Technical lemmas} \label{SecProofTech}
In this section we gather a number of technical results that are required
for the sake of the examples or counter-examples.

\smallskip

We start with Example 1 of Section~\ref{SubSecEx}.
In cases (i) (resp., (iii)) it is not difficult to check that $f$ satisfies {\cred assumption} \eqref{hypUB1b} 
(resp., \eqref{hypUB1c}).
In case (ii), the second part of assumption \eqref{hypUB1a} is easy to check, 
and the first part follows from the next lemma.

\begin{lem} \label{lemEx1}
Let $p\in(1,p_c)$. Under assumptions (ii) of Example 1, the function $s^{-m}f(s)$ is nonincreasing on $(0,\infty)$ for $m<p_c$ close to $p_c$.
\end{lem}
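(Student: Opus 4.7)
I would set $h(s):=s^{-m}f(s)=s^{p-m}\log^q(K+s)$ on $(0,\infty)$ and compute directly
\[
h'(s)=s^{p-m-1}\log^{q-1}(K+s)\Bigl[(p-m)\log(K+s)+\frac{qs}{K+s}\Bigr].
\]
Since $K\ge 1$ forces $\log(K+s)>0$ for $s>0$, the sign of $h'(s)$ is that of the bracket. Thus $h$ is nonincreasing on $(0,\infty)$ iff
\[
m-p\ \ge\ q\,\phi(s)\quad\text{for all }s>0,\qquad \phi(s):=\frac{s}{(K+s)\log(K+s)}.
\]
So the whole problem reduces to computing $S(K):=\sup_{s>0}\phi(s)$ and checking $q\,S(K)<p_c-p$ in each of the three subcases; then any $m\in(p+q\,S(K),p_c)$ works, and such $m$ can be chosen arbitrarily close to $p_c$.

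The first step is to study $\phi$. A direct computation gives $\phi'(s)$ with the sign of $K\log(K+s)-s$, which is strictly decreasing in $s$. When $K=1$ one checks $\phi\le 1$ with $\phi(0^+)=1$, so $S(1)=1$. When $K>1$ the unique critical point $s^*>0$ is characterized by $K+s^*=e^{s^*/K}$; writing $u:=s^*/K>0$ this reads
\[
K=\frac{e^u}{1+u},\qquad\text{and then}\qquad \phi(s^*)=\frac{K}{K+s^*}=\frac{1}{1+u}.
\]
One checks that $K\mapsto u$ is an increasing bijection from $[1,\infty)$ to $[0,\infty)$, so $S(K)=1/(1+u(K))$ is strictly decreasing in $K$ with $S(1)=1$ and $S(K)\downarrow 0$ as $K\to\infty$.

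The second step is case analysis. If $q\le 0$ (the subcase of (ii1) with nonpositive $q$) the condition $m-p\ge q\phi(s)$ is free and any $m\in(p,p_c)$ works. If $q>0$, I need $qS(K)<p_c-p$, i.e.\ $S(K)<(p_c-p)/q$. In case (ii1), $S(K)\le 1<(p_c-p)/q$ since $q<p_c-p$. In case (ii2), $K>1$ yields $u(K)>0$ hence $S(K)<1=(p_c-p)/q$. In case (ii3) the inequality $S(K)<(p_c-p)/q$ rearranges, using the monotone relation $K=e^u/(1+u)$, precisely to
\[
K\ >\ \frac{p_c-p}{q}\exp\!\Bigl[\frac{q}{p_c-p}-1\Bigr],
\]
which is exactly the standing assumption; so the condition is met with strict inequality.

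The main (only) obstacle is the case (ii3) computation: one must invert the implicit relation $K=e^u/(1+u)$ to match the explicit threshold on $K$ in the hypothesis. This is a direct substitution once $S(K)=1/(1+u)$ is established, but it is the step where the somewhat opaque constant in (ii3) is justified. Everything else is a one-variable calculus exercise.
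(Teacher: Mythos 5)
Your argument is correct and closely parallel to the paper's, but you package it differently. The paper sets $h(s):=(p-m)(K+s)\log(K+s)+qs$ (which is your bracket times $(K+s)$), notes $h(0)\le 0$ and that $h'$ is decreasing, and then shows either $h'(0)\le 0$ (so $h\le 0$ everywhere) or else evaluates $h$ at its unique maximizer $s_0$ (where $\log(K+s_0)=\tfrac{q}{m-p}-1$) and verifies $h(s_0)<0$ for $m$ near $p_c$; the three subcases (ii1)–(ii3) drop out from that evaluation. You instead isolate $m$ and reduce everything to the single quantity $S(K)=\sup_{s>0}\phi(s)$ with $\phi(s)=\tfrac{s}{(K+s)\log(K+s)}$, compute $S(K)=\tfrac{1}{1+u}$ via the parametrization $K=e^u/(1+u)$, and translate $qS(K)<p_c-p$ back into the explicit lower bound on $K$. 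The two routes manipulate the same inequality, but your version makes the origin of the opaque threshold $K>\tfrac{p_c-p}{q}\exp[\tfrac{q}{p_c-p}-1]$ in (ii3) more transparent (it is exactly $S(K)<(p_c-p)/q$ under the change of variables), whereas the paper's version requires no inversion of an implicit relation and handles all three subcases within one sign analysis of $h$. Both are complete; I see no gaps in yours.
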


\begin{proof}
Let $g(s)=s^{-m}f(s)=s^{p-m}\log^q(K+s)$. It suffices to show that $g'(s)\le 0$ on $(0,\infty)$ for some $m\in(p,p_c)$. 
We compute
$$g'(s)=(p-m)s^{p-m-1}\log^q(K+s)+qs^{p-m}(K+s)^{-1}\log^{q-1}(K+s),$$
which has the sign of
$h(s):=(p-m)(K+s)\log(K+s)+qs$. Note that $h(0)\le 0$. We have
$$h'(s)=(p-m)[1+\log(K+s)]+q\le \delta:=(p-m)(1+\log K)+q.$$
If $\delta_0:=(p-p_c)(1+\log K)+q<0$, then $\delta\le 0$ for $m<p_c$ close to $p_c$,
hence $g'\le 0$ on $(0,\infty)$.
This is always true under assumption (ii1) or (ii2). Thus
 assume (ii3) and $\delta_0\ge 0$, hence $\delta>0$. 
The function $h'$ has unique zero $s_0>0$, given by $\log(K+s_0)=\frac{q}{m-p}-1$. 
Since $h<0$ at $0$ and $\infty$, the global maximum of $h$ is attained at $s_0$.
We then compute 
$$\begin{aligned}
h(s_0)
&=(p-m)(K+s_0)\log(K+s_0)+qs_0
=-(K+s_0)(q+p-m)+qs_0 \\
&=(K+s_0)(m-p)-qK 
=(m-p)\exp\bigl[\ts\frac{q}{m-p}-1\bigr]-qK<0
\end{aligned}$$
for $m$ close to $p_c$, hence again $g'\le 0$ on $(0,\infty)$.
\end{proof}

The last part of Example~1 is based on:

\begin{prop} \label{lemEx1b}
Under assumption \eqref{Ex1a2}
there exists a radially symmetric decreasing, positive classical solution of $-\Delta v=v^p\log^q(K+v)$ in $\R^n$.
\end{prop}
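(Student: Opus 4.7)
The approach is ODE shooting for the radial profile $u(r) = v(|x|)$, which must solve
\begin{equation*}
u''(r) + \tfrac{n-1}{r} u'(r) + f(u(r)) = 0, \quad r > 0; \qquad u(0) = \alpha,\ u'(0) = 0,
\end{equation*}
with $f(u) = u^p \log^q(K + u)$ and $\alpha > 0$ to be chosen (note that \eqref{Ex1a2} forces $n \ge 3$). Local existence is standard; the energy $E(r) = \tfrac{1}{2} u'(r)^2 + F(u(r))$, where $F(u) = \int_0^u f$, satisfies $E' = -\tfrac{n-1}{r}(u')^2 \le 0$, whence $u_\alpha \le \alpha$ on the positivity interval. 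A one-line sign check at any hypothetical interior critical point $r_1 > 0$ (where the ODE forces $u_\alpha''(r_1) = -f(u_\alpha(r_1)) < 0$) rules out interior minima above zero, so $u_\alpha$ is strictly decreasing while positive. The goal is to exhibit $\alpha > 0$ small for which $u_\alpha$ never vanishes.

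The driving observation is that \eqref{Ex1a2} renders $f$ (super)critical at the origin. If $K = 1$ (Case~A, or Case~B with $K = 1$), then $\log^q(1+u) \sim u^q$ as $u \to 0^+$, so $f(u) \sim u^{p+q}$ with $p+q > p_S$. If $K > 1$ (the remaining instance of Case~B), then $\log^q(K+u) \to (\log K)^q > 0$, so $f(u) \sim (\log K)^q u^p$ with $p \ge p_S$. Accordingly, set $p^* := p+q$ and $\lambda_\alpha := \alpha^{(p+q-1)/2}$ in the first instance, and $p^* := p$ and $\lambda_\alpha := \alpha^{(p-1)/2}(\log K)^{q/2}$ in the second. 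The rescaling $w_\alpha(s) := \alpha^{-1} u_\alpha(\lambda_\alpha^{-1} s)$ then satisfies
\begin{equation*}
w_\alpha''(s) + \tfrac{n-1}{s} w_\alpha'(s) + h_\alpha(w_\alpha(s)) = 0, \qquad w_\alpha(0) = 1,\ w_\alpha'(0) = 0,
\end{equation*}
where a direct computation gives $h_\alpha(w) \to w^{p^*}$ as $\alpha \to 0^+$, uniformly for $w \in [0, 1]$ (which contains the range of $w_\alpha$). Since $p^* \ge p_S$ in both instances, the limit is the (super)critical Lane--Emden equation, whose radial Cauchy problem with $w(0) = 1$, $w'(0) = 0$ admits a positive, decreasing, global solution $w_0$ with $w_0(s) \to 0$ as $s \to \infty$: an Aubin--Talenti instanton if $p^* = p_S$, a Fowler-type solution if $p^* > p_S$.

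The main technical obstacle is propagating the positivity of $w_0$ to $w_\alpha$ on the whole of $[0, \infty)$. Continuous dependence of the ODE on the nonlinearity yields $w_\alpha \to w_0$ uniformly on any bounded interval as $\alpha \to 0^+$, hence $w_\alpha > 0$ on arbitrarily large compact subsets for $\alpha$ small. To rule out a zero at large $s$, one constructs an explicit positive lower barrier of Emden--Fowler type, for example $\underline w(s) := \tfrac{1}{2}(1 + \kappa s^2)^{-1/(p^*-1)}$. A direct calculation shows that, for $\kappa > 0$ sufficiently small, $\underline w$ is a strict radial subsolution of $-\Delta w = w^{p^*}$ on $\R^n$, with a quantitative buffer; by the uniform convergence $h_\alpha(w) \to w^{p^*}$ on $[0, 1]$, this subsolution property persists for the perturbed equation for $\alpha$ small. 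Since $\underline w(0) = 1/2 < 1 = w_\alpha(0)$ and $\underline w \to 0$ at infinity, a standard maximum-principle comparison (adapted to the radial ODE setting) yields $w_\alpha \ge \underline w > 0$ on $[0, \infty)$. Reversing the rescaling produces the desired positive, radially symmetric, strictly decreasing classical solution $v$ of $-\Delta v = v^p \log^q(K + v)$ on $\R^n$.
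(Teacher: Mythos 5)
Your rescaling cleanly identifies the mechanism behind the result: under \eqref{Ex1a2} the nonlinearity is critical or supercritical at the origin (like $u^{p+q}$ with $p+q>p_S$ when $K=1$, like $(\log K)^q u^p$ with $p\ge p_S$ when $K>1$), so the ODE shot from small data sees the Lane--Emden problem with a (super)critical power. This is a nice heuristic, and the opening reductions (energy bound $u_\alpha\le\alpha$, strict monotonicity, local uniform convergence $w_\alpha\to w_0$) are fine.

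The concluding comparison step, however, has a genuine gap. The subsolution/supersolution principle would require $\underline w\le w_\alpha$ on the boundary of the region where you apply it; if $w_\alpha$ has a first zero $R_\alpha$, then on $[0,R_\alpha]$ the inequality is violated at the right endpoint ($\underline w(R_\alpha)>0=w_\alpha(R_\alpha)$), and on $[0,\infty)$ you would need an a priori lower bound on $w_\alpha$ at infinity, which is exactly what is being proved. The argument is therefore circular. Moreover, the barrier you chose cannot lie below $w_\alpha$ for large $s$ even in the limit: writing $\beta=1/(p^*-1)$, the subsolution condition $-\Delta\underline w\le\underline w^{p^*}$ forces $\kappa\le 2^{-p^*}/(n\beta)$, so $\underline w(s)\sim\tfrac12\kappa^{-\beta}s^{-2\beta}$ with $\tfrac12\kappa^{-\beta}\ge(2n\beta)^\beta$, whereas the limiting Lane--Emden solution decays like $[2\beta(n-2-2\beta)]^\beta s^{-2\beta}$ (asymptotic to the singular solution when $p^*>p_S$, and like $s^{-(n-2)}$ when $p^*=p_S$). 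Since $n/(n-2-2\beta)>1$, one always has $\underline w>w_0\approx w_\alpha$ at infinity, so the inequality $w_\alpha\ge\underline w$ is simply false. There is also a secondary point: the convergence you invoke, $h_\alpha(w)\to w^{p^*}$ uniformly on $[0,1]$, is \emph{additive}, while a barrier-with-buffer argument needs the multiplicative statement $h_\alpha(w)/w^{p^*}\to 1$ uniformly (which, fortunately, does hold here), because the buffer is of size $O(\underline w^{p^*}(s))\to 0$ as $s\to\infty$ and is swamped by any fixed additive error.

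The paper closes the argument differently and more efficiently: it verifies that the Pohozaev quantity $\phi(s):=sf(s)-(p_S+1)F(s)$ is nonnegative on $[0,s_0]$ for some small $s_0$ (this is exactly the supercriticality at the origin that your rescaling exposes), shoots with $v(0)=s_0$, $v'(0)=0$, and observes that a first zero $R$ would produce a nontrivial Dirichlet solution on $B_R$ with $0\le v\le s_0$, contradicting Pohozaev's inequality. In your rescaled variables this is the statement $\phi_\alpha(w):=wh_\alpha(w)-(p_S+1)\int_0^w h_\alpha(z)\,dz=\phi(\alpha w)/(\alpha^2\lambda_\alpha^2)\ge 0$ for $w\in[0,1]$ and $\alpha$ small, which is precisely \eqref{signphi}. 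If you replace the barrier comparison by this Pohozaev step, your proof becomes correct and is essentially a rescaled version of the paper's.
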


\begin{proof}
We modify an argument from \cite{QS12} (see \cite[Theorem~1.4]{QS12}).
Let $F(s)=\int_0^s f(z)dz$,
$\phi(s)=sf(s)-(p_S+1)F(s)$ and note that $\phi(0)=0$.
We compute
$$\phi'(s)=(p-p_S)s^p\log^q(K+s)+qs^{p+1}(K+s)^{-1}\log^{q-1}(K+s),$$
which has the sign of $h(s):=(p-p_S)(K+s)\log(K+s)+qs$.
Therefore, $\phi\ge 0$ on $[0,\infty)$ when $p\ge p_S$, $q>0$ and $K\ge 1$.
Next consider the case $1<p<p_S$, $q>p_S-p$ and $K=1$. 
Then we have $h(0)=0$ and $h'(0)=p-p_S+q>0$, hence
$h'>0$ on $[0,s_0]$ for $s_0>0$ small.
In all cases we thus have
\be{signphi}
\phi(s)=sf(s)-(p_S+1)F(s)\ge 0\quad\hbox{for all $s\in [0,s_0]$}.
\ee

Now extend $f$ by $0$ for $s<0$ and consider the initial value problem
$$-(r^{n-1}v')'= r^{n-1}f(v),\  r > 0, \ \quad v(0) = s_0, \quad v'(0) = 0.$$
Let $R^*$ be its maximal existence time. Since $f\ge 0$, we have $v'\le 0$ on $[0,R^*)$. 
Assume for contradiction that $v$ has a (first) zero $R\in (0,R^*)$. Then $v$ is a classical solution of 
$-\Delta v=f(v)$ on the ball $B_R$ with Dirichlet boundary conditions and $0\le v\le s_0$.
But in view of \eqref{signphi} and Pohozaev's inequality (see, e.g.,~\cite[Corollary 5.2]{QSb}{\cb )}, this is a contradiction.
We conclude that $v>0$ on $(0,\infty)$, which proves the proposition.
\end{proof}

In Example 2 of Section~\ref{SubSecEx} we used the following:

\begin{lem} \label{lemEx2b}
For $p>1$ and $a\in(0,p-1)$, the function 
$$f(s)=s^{p+a\sin[\log\log(3+s+s^{-1})]}$$
{\cb has} controled variation.
\end{lem}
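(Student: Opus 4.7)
My plan is to apply the sufficient condition stated just before \eqref{ControlVarLog}: a positive function $L\in C^1(0,\infty)$ has controlled variation provided $sL'(s)/L(s)$ is bounded on $(0,\infty)$. Since $3+s+s^{-1}\ge 5$ for all $s>0$ (by AM--GM), the inner iterated logarithm is strictly positive, so $f$ is smooth and positive on $(0,\infty)$ and only the boundedness check remains.

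I would write $f(s)=s^{\phi(s)}$ with $\phi(s):=p+a\sin[\log\log(3+s+s^{-1})]$, so that logarithmic differentiation yields
$$\frac{sf'(s)}{f(s)} = \phi(s) + s\,\phi'(s)\,\log s.$$
The first term is bounded by $p+a$. For the second, a direct chain-rule computation gives
$$\phi'(s)= \frac{a\cos[\log\log(3+s+s^{-1})]}{\log(3+s+s^{-1})}\cdot\frac{1-s^{-2}}{3+s+s^{-1}},$$
and multiplying by $s\log s$ leaves essentially
$$\frac{s-s^{-1}}{3+s+s^{-1}}\ \cdot\ \frac{\log s}{\log(3+s+s^{-1})}$$
times a bounded cosine factor.

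The endgame is then a short asymptotic analysis of these two fractions, both of which are continuous on $(0,\infty)$ and admit finite limits at each endpoint: each tends to $+1$ as $s\to\infty$, and to $-1$ as $s\to 0^+$ (using $3+s+s^{-1}\sim s^{-1}$ and $\log(3+s+s^{-1})\sim|\log s|$ near $0$). Combined with the bounded cosine, this gives a uniform bound for $sf'(s)/f(s)$ on $(0,\infty)$, yielding the claim.

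No serious obstacle is expected; the only point worth flagging is that the symmetric form $s+s^{-1}$ inside the iterated logarithm is precisely what keeps $\log(3+s+s^{-1})$ comparable to $|\log s|$ at \emph{both} endpoints, so the boundedness is global on $(0,\infty)$ rather than merely on compacta---which is what the definition \eqref{hypcontrolvar} requires.
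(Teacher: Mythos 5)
Your proof is correct and follows essentially the same route as the paper: invoke the sufficient criterion that boundedness of $sf'(s)/f(s)$ implies controlled variation, write $\log f(s)=\phi(s)\log s$, and reduce to the boundedness of $\dfrac{(s-s^{-1})\log s}{(3+s+s^{-1})\log(3+s+s^{-1})}$ on $(0,\infty)$, which holds by continuity and the finite limits at $0^+$ and $\infty$. The observation about the symmetric form $s+s^{-1}$ ensuring comparability of $\log(3+s+s^{-1})$ with $|\log s|$ at both endpoints is a nice explicit flag, but the substance matches the paper's argument.
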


\begin{proof}
It suffices to show the boundedness of $\zeta=sf'/f$ {\cb on $(0,\infty)$}
(cf.~after \eqref{ControlVarLog}).
Writing $f(s)=e^\phi$ with $\phi(s)=[p+a\sin(h(s))] \log s$ and $h(s)=\log\log(3+s+s^{-1})$, we have
$$\zeta=s\phi'= p+a\sin(h(s))+ a s h'(s) \cos [h(s)] \log s$$
and it thus suffices to verify the boundedness on $(0,\infty)$ of 
$$s h'(s)\log s
={s\bigl\{ \log(3+s+s^{-1})\}' \log s\over \log(3+s+s^{-1})}
={(s-s^{-1})\log s\over (3+s+s^{-1})\log(3+s+s^{-1})},$$
which is true.
\end{proof}

In Example 4 of Section~\ref{SubSecEx} we used the following:

\begin{lem} \label{lemEx4}
For any $1<\ell<m<p<p_*$, there exist $f$ satisfying \eqref{hypcontpos}, \eqref{Ex1} and 
$$\hbox{$f(s_i)=s_i^p$, $f(t_i)=t_i^m$ for some sequences $s_i,t_i\to\infty$.}$$
\end{lem}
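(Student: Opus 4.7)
The plan is to work on the logarithmic scale and build $f$ via $f(s)=\exp(\phi(\log s))$ for $s>0$, with $f(0)=0$, where $\phi\colon\R\to\R$ will be a continuous piecewise affine function to be chosen. Since $sf'(s)/f(s)=\phi'(\log s)$ wherever defined, condition~\eqref{Ex1} becomes equivalent to $\phi'\in[\ell,p_*]$ a.e., while the prescribed equalities $f(s_i)=s_i^p$ and $f(t_i)=t_i^m$ translate into $\phi(u_i)=pu_i$ and $\phi(v_i)=mv_i$ with $s_i=e^{u_i}$, $t_i=e^{v_i}$.

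Next, I will fix two ratios $\alpha,\beta>1$ and a starting point $u_1>0$, define recursively $v_k=\alpha u_k$ and $u_{k+1}=\beta v_k$ (both sequences tending to $\infty$ geometrically), and let $\phi$ be the continuous function that is affine on each interval $[u_k,v_k]$ and $[v_k,u_{k+1}]$ and interpolates the data $\phi(u_k)=pu_k$, $\phi(v_k)=mv_k$. For $u\le u_1$, I extend $\phi$ by the line of slope $\ell$ through $(u_1,pu_1)$; this gives $\phi(u)\to-\infty$ as $u\to-\infty$ (so $f(s)\to 0$ as $s\to 0$), while maintaining $\phi'\equiv\ell$ on the low-$s$ region. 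Positivity of $f$ on $(0,\infty)$ and continuity on $[0,\infty)$ are then automatic.

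The key step is verifying $\phi'\in[\ell,p_*]$ on each linear segment. A direct computation gives slope $(m\alpha-p)/(\alpha-1)$ on every piece of the form $[u_k,v_k]$ (strictly increasing in $\alpha\in(1,\infty)$, with range $(-\infty,m)$), and slope $(p\beta-m)/(\beta-1)$ on every piece of the form $[v_k,u_{k+1}]$ (strictly decreasing in $\beta\in(1,\infty)$, with range $(p,\infty)$). Consequently, the choices
$$\alpha\ge\frac{p-\ell}{m-\ell},\qquad \beta\ge\frac{p_*-m}{p_*-p},$$
which make sense and give values $>1$ precisely because $\ell<m<p<p_*$, place both slopes inside $[\ell,p_*]$ and deliver \eqref{Ex1}. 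By construction the sequences $s_k=e^{u_k}$ and $t_k=e^{v_k}$ tend to $\infty$ and satisfy $f(s_k)=s_k^p$, $f(t_k)=t_k^m$. The only (mild) obstacle is this joint feasibility check — accommodating both the ``downward'' segments (slope $<m$) and the ``upward'' segments (slope $>p$) within the same strip $[\ell,p_*]$ — which is exactly what the two strict inequalities $\ell<m$ and $p<p_*$ allow; the case $p_*=\infty$ is even simpler since the upper constraint on $\beta$ becomes vacuous.
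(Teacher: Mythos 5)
Correct, and essentially the same approach as the paper: both constructions build a zigzag piecewise-power function (piecewise affine in the logarithmic variable) whose elasticity $sf'/f$ alternates between two admissible values in $[\ell,p_*]$, touching the lines $s^p$ and $s^m$ at a geometrically growing sequence of knots. You parameterize by the geometric ratios $\alpha,\beta$ between knots and solve for the slopes, while the paper fixes intermediate slopes $\bar m\in(\ell,m)$, $\bar p\in(p,p_*)$ and solves for the knot spacings $s_{2i+1}=s_{2i}^{(p-\bar m)/(m-\bar m)}$, $s_{2i+2}=s_{2i+1}^{(\bar p-m)/(\bar p-p)}$ — these are dual parameterizations of the identical construction.
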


\begin{proof}
Fix $\bar m\in (\ell,m)$, $\bar p \in (p,p_*)$ and let
$$f(s)=\chi_{[0,2)}s^p+
\sum_{i=1}^\infty \chi_{[s_{2i},s_{2i+1})} s_{2i}^{p-\bar m}s^{\bar m}
+\chi_{[s_{2i+1},s_{2i+2})} s_{2i+1}^{m-\bar p}s^{\bar p}$$
where $s_2=2$ and $s_i$ is defined inductively by $s_{2i+1}=s_{2i}^{(p-\bar m)/(m-\bar m)}$ and 
$s_{2i+2}=s_{2i+1}^{(\bar p-{\cred m})/(\bar p-{\cred p})}$. 
We have $f(s_{2i}^+)=f(s_{2i})= s_{2i}^p$ and $f(s_{2i+1}^+)=f(s_{2i+1})= s_{2i+1}^m$.
The choice of $s_i$ guarantees
$f({s_{2i+1}}^-)=s_{2i}^{p-\bar m}s_{2i+1}^{\bar m}=s_{2i+1}^m$ and
$f(s_{2i+2}^-)=s_{2i+1}^{m-\bar p}s_{2i+2}^{\bar p}=s_{2i+2}^p$,
hence $f$ is continuous.
Moreover, 
$$sf'f^{-1}=
\begin{cases}
p&\hbox{ on $[0,s_2)$} \\
\noalign{\vskip 1mm}
\bar m,&\hbox{ on $(s_{2i},s_{2i+1})$} \\
\noalign{\vskip 1mm}
\bar p,&\hbox{ on $(s_{2i+1},s_{2i+2})$.}
\end{cases}
$$
It follows that $s^{-\ell}f(s)$ is increasing and $s^{-p_*}f(s)$ is decreasing. 
\end{proof}

The counter-example in Remark~\ref{remLiouvParab}(iii) is based on the following:

\begin{lem} \label{lemEx2}
Let $f, \tilde f$ be defined in \eqref{S-LiouvParabHypInt}, \eqref{counterexftilde},
with $1<m<p_B<q$ and $a=\bigl[\frac{p_B-m}{q-p_B}\bigr]^{\frac{1}{q-m}}$.

(i) There exists $p\in (1,p_B)$ such that $s^{-p}\tilde f(s)$ is nonincreasing on $(0,\infty)$.

(ii) For any $p\in (1,p_B)$, the function $s^{-p}f(s)$ is not nonincreasing on $(0,\infty)$.
\end{lem}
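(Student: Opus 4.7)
The plan is to reduce both assertions to algebraic inequalities that are tightest at the break point $s=a$, which is the only place where monotonicity of the relevant quotients can fail nontrivially. First, I compute $\tilde f$ on the two pieces: for $s\in(0,a]$, integration gives $\tilde f(s)=\frac{s^m}{m}+\frac{s^q}{q}$, while for $s\ge a$ one obtains $\tilde f(s)=\frac{(1+a^{q-m})s^m}{m}-\frac{(q-m)a^q}{mq}$. The key observation is that $s\tilde f'(s)=f(s)$ yields $(s^{-p}\tilde f)'(s)=s^{-p-1}(f(s)-p\tilde f(s))$, so part~(i) is equivalent to finding $p\in(1,p_B)$ with $f(s)\le p\tilde f(s)$ on all of $(0,\infty)$.

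For part~(i), on $(0,a]$ the inequality $f\le p\tilde f$ rearranges (using $p>m$, which I must take) to $s^{q-m}\le \frac{q(p-m)}{m(q-p)}$, which is tightest at $s=a$. Substituting the hypothesis $a^{q-m}=\frac{p_B-m}{q-p_B}$ and solving, this reduces to $p\ge p_0:=\frac{mq}{q+m-p_B}$. The analogous computation on $[a,\infty)$, using the explicit formula for $\tilde f$ there and the identities $1+a^{q-m}=\frac{q-m}{q-p_B}$ and $a^{q-m}=\frac{p_B-m}{q-p_B}$, produces exactly the same threshold, which is no accident: the continuity of $f$ and $\tilde f$ at $s=a$ forces the two one-sided constraints to collide there. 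It then remains to verify $p_0\in(1,p_B)$: the inequality $p_0<p_B$ rearranges to $(p_B-m)(q-p_B)>0$, which is guaranteed by $m<p_B<q$, while $p_0>1$ follows from $m>1$ and $p_B>m$. Any $p\in[p_0,p_B)$ thus proves~(i).

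For part~(ii), on $(0,a)$ one has simply $s^{-p}f(s)=s^{m-p}+s^{q-p}$, with derivative $s^{m-p-1}\bigl[(m-p)+(q-p)s^{q-m}\bigr]$. Evaluating the bracket at $s=a^-$ and substituting $a^{q-m}=\frac{p_B-m}{q-p_B}$, an elementary simplification gives
\[
(m-p)+(q-p)\,\frac{p_B-m}{q-p_B}=\frac{(q-m)(p_B-p)}{q-p_B},
\]
which is strictly positive whenever $p<p_B$. Hence $s\mapsto s^{-p}f(s)$ is strictly increasing on a left neighborhood of $a$, contradicting monotonicity on $(0,\infty)$.

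No serious obstacle is expected: the whole proof is an elementary calculation, and the peculiar value $a=\bigl[(p_B-m)/(q-p_B)\bigr]^{1/(q-m)}$ is reverse-engineered precisely so that the threshold at $s=a$ produced on the two sides of $a$ agrees and lies strictly inside $(1,p_B)$. The only point requiring mild care is verifying that in each sub-interval the worst $s$ occurs at the endpoint $a$ (by monotonicity of the relevant expressions in $s$), after which the algebra identified above finishes both parts.
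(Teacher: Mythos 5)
Your proof is correct and follows essentially the same route as the paper: both reduce the monotonicity questions to the sign of $f-p\tilde f$ (equivalently $s\tilde f'-p\tilde f$) and of $sf'-pf$ on the two pieces, and both exploit the fact that the constraint is tightest at the break point $s=a$, where the choice $a^{q-m}=\frac{p_B-m}{q-p_B}$ makes the inequality feasible precisely for $p$ up to $p_B$. The only cosmetic difference is that you extract the explicit threshold $p_0=\frac{mq}{q+m-p_B}$ and verify $p_0\in(1,p_B)$, whereas the paper phrases the same computation as ``take $p<p_B$ close to $p_B$ so that $\bar s_p>a$.''
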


\begin{proof}
We compute
$$\tilde f(s)=\begin{cases}
\frac{1}{m}s^m+\frac{1}{q}s^q,&\hbox{ if $s\in[0,a]$} \\
\noalign{\vskip 1mm}
\frac{1}{m}(1+a^{q-m})s^m+(\frac{1}{q}-\frac{1}{m})a^q,&\hbox{ if $s\ge a$,}
\end{cases}
$$
$$sf'(s)=\begin{cases}
ms^m+qs^q,&\hbox{ if $s\in[0,a)$} \\
\noalign{\vskip 1mm}
m(1+a^{q-m})s^m,&\hbox{ if $s>a$,}
\end{cases}
\qquad
s\tilde f'(s)=\begin{cases}
s^m+s^q,&\hbox{ if $s\in[0,a)$} \\
\noalign{\vskip 1mm}
(1+a^{q-m})s^m,&\hbox{ if $s>a$.}
\end{cases}
$$
For any $p\in (1,p_B)$, setting $s_p=\bigl[\frac{(p-m)_+}{q-p}\bigr]^{1/(q-m)}<a$, we have
$$sf'-pf=ms^m+qs^q-p(s^m+s^q)=(m-p)s^m+(q-p)s^q>0,\quad s\in(s_p,a),$$ 
which proves assertion (ii). On the other hand, choose $p\in(m,p_B)$ close to $p_B$, so that 
$\bar s_p:=\bigl[\frac{q(p-m)}{m(q-p)}]^{1/(q-m)}>a$. Then we have
$$s\tilde f'-p\tilde f=s^m+s^q-p(\ts\frac{1}{m}s^m+\frac{1}{q}s^q)
=(1-\frac{p}{m})s^m+(1-\frac{p}{q})s^q\le 0,\quad s\in(0,a),$$
whereas
$$\begin{aligned}
s\tilde f'-p\tilde f
&=\ts(1-\frac{p}{m})(1+a^{q-m})s^m+p(\frac{1}{m}-\frac{1}{q})a^q
\le \ts(1-\frac{p}{m})(a^m+a^q)+p(\frac{1}{m}-\frac{1}{q})a^q \\
&{\cred =\,}\ts(1-\frac{p}{m})a^m+(1-\frac{p}{q})a^q\le 0, \quad s>a. 
\end{aligned}$$
Assertion (i) follows. 
\end{proof}

\medskip

Finally, we justify the assertion in Remark~\ref{formHardy0}. 

\begin{prop} \label{propSharp}
Let $f$ be as in Theorem~\ref{thm1-0} or \ref{thm1}. Then all solutions
of the ODE $y'=f(y)$ with $y(0)>0$ blow up in finite time $T$ and satisfy
\be{lowerBU}
\frac{f(y(t))}{y(t)}\ge \frac{c}{T-t},\quad t\to T,
\ee
for some constant $c>0$ depending only on $f$.
\end{prop}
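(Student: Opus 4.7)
The plan is to combine the integral representation of the blow-up time for $y'=f(y)$ with a uniform ``doubling at infinity'' property of $f$. First I would observe that in all cases covered by Theorems~\ref{thm1-0} and~\ref{thm1}, Lemma~\ref{lemRegVar} provides a lower bound of the form $f(s)\ge c_0 s^{q_0}$ for $s\ge 1$, with some $q_0>1$: this is immediate from part~(i) applied at $\lambda=1$ under the regular variation hypothesis of Theorem~\ref{thm1-0} (or of \eqref{hypUB1a}), and it follows from parts~(iii)--(iv) (together with the superlinearity in \eqref{hypUB1b}) in the remaining cases. Since $y'=f(y)>0$, the solution $y$ is strictly increasing; the preceding lower bound forces $\int^{\infty} ds/f(s)<\infty$, so that separation of variables yields blow-up in some finite time $T>0$ with the exact representation
\begin{equation*}
T-t=\int_{y(t)}^{\infty}\frac{ds}{f(s)},\qquad 0\le t<T.
\end{equation*}

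Next I would establish the following uniform \emph{doubling estimate}: there exist $\sigma>1$, $C>0$ and $s_0\ge 1$ such that
\begin{equation*}
\sup_{s\in[Y,\sigma Y]} f(s)\le C\,f(Y)\qquad\text{for every }Y\ge s_0.
\end{equation*}
Under the regular variation assumption \eqref{hypUB1a0} (or of \eqref{hypUB1a}), the limit $f(\lambda s)/f(\lambda)\to s^m$ as $\lambda\to\infty$ is uniform for $s$ in compact subsets of $(0,\infty)$ (see \cite[Theorem~1.1]{Se}); taking $s\in[1,\sigma]$ with any fixed $\sigma>1$ yields the claim for $\lambda$ large enough. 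Under the controlled variation assumptions \eqref{hypUB1b} and \eqref{hypUB1c} (where $f$ itself has controlled variation, being a power times $L^a$ with $L$ of controlled variation), the Karamata representation \eqref{SlowVarRepres} with bounded $\eta,\xi$ gives both upper and lower bounds on $L(\lambda s)/L(\lambda)$ uniformly in $\lambda>0$ and in $s$ in any compact subset of $(0,\infty)$, from which the doubling estimate follows at once.

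Finally, combining the two ingredients: for any $Y\ge s_0$, the doubling estimate yields
\begin{equation*}
\int_{Y}^{\infty}\frac{ds}{f(s)}\ge\int_{Y}^{\sigma Y}\frac{ds}{f(s)}\ge \frac{(\sigma-1)Y}{C\,f(Y)}.
\end{equation*}
Applying this at $Y=y(t)$ (legitimate once $t$ is close enough to $T$, since $y(t)\to\infty$) and using the representation of $T-t$ gives
\begin{equation*}
T-t\ge \frac{\sigma-1}{C}\,\frac{y(t)}{f(y(t))},
\end{equation*}
which is the desired inequality with $c=(\sigma-1)/C$. The only genuinely delicate point is the doubling estimate of the second step, and the main technical subtlety is unifying its proof across the regularly varying and the controlledly varying cases; once the Karamata representation is invoked, however, both reduce to the same elementary computation, so I expect no serious obstacle.
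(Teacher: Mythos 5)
Your proof is correct, and at its core it uses the same two ingredients as the paper: the exact blow-up representation $T-t=\int_{y(t)}^{\infty} dz/f(z)$ together with the Karamata representation of $f$ near infinity. Where you diverge is in the middle step that converts the representation into the desired bound $\int_{Y}^{\infty} dz/f(z)\gtrsim Y/f(Y)$: the paper writes $f=\tau f_0$ with $f_0(s)=\exp[\int_1^s z^{-1}\xi(z)\,dz]$, picks $q>\|\xi\|_\infty$ so that $s^{-q}f_0(s)$ is decreasing on $[1,\infty)$, and then estimates the \emph{full} tail integral via $\int_s^\infty z^{-q}\,\frac{z^q}{f_0(z)}\,dz \ge \frac{s^q}{f_0(s)}\int_s^\infty z^{-q}dz$; you instead establish a local multiplicative ``doubling'' bound $\sup_{s\in[Y,\sigma Y]}f(s)\le C f(Y)$ for $Y\ge s_0$ and truncate the tail integral to $[Y,\sigma Y]$. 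Both devices are immediate from the bounded-$\eta,\xi$ representation (or, in the regularly varying case, from Karamata's uniform convergence theorem), and both produce the same conclusion $H(Y)\ge cY/f(Y)$ for $Y$ large, so the two proofs are genuinely interchangeable. Your doubling variant is arguably a touch more elementary, since it avoids the auxiliary function $f_0$ and the global monotonicity of $s^{-q}f_0(s)$ and uses only a compactly localized bound; the paper's version has the small advantage of giving a uniform bound on all of $[1,\infty)$ (so no $s_0$ is needed) and of making the role of the bound $\|\xi\|_\infty$ in the final constant more transparent.
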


\begin{proof}
Since $f>0$ on $(0,\infty)$, if $y(0)>0$ then $y$ is increasing and cannot stay bounded. Also, by Lemma~\ref{lemRegVar}, there exist
 $m>1$ and $c>0$ such that $f(s)\ge cs^m$ for all $s>1$.
 Consequently $T$ is finite, with $\lim_{t\to T}y(t)=\infty$,
and $y$ is given by $H(y(t))=T-t$, where $H(s)=\int_s^\infty \frac{dz}{f(z)}\ <\infty$.

On the other hand, the assumptions of Theorem~\ref{thm1-0} or \ref{thm1}
imply that $f$ has regular variation at $\infty$ or controled variation. 
Therefore (see before \eqref{ControlVarLog}), there exist functions $\tau,\xi$ and,
and constants $C_1,C_2>1$ such that 
$$f(s)=\tau(s)f_0(s)\ \ \hbox{ for all $s\ge 1$,\quad where }
f_0(s)=\exp\Bigl[\int_1^s z^{-1}\xi(z)\,dz\Bigr],$$
and $|\xi(s)|\le C_2$, $C_1\le \tau(s)\le C_2$ on $[1,\infty)$.
Take $q>C_2>1$. We see that
$s^{-q}f_0(s)=\exp\bigl[\int_1^s z^{-1}[\xi(z)-q]\,dz\bigr]$ is decreasing for $s\ge 1$.
Consequently, for $s\ge 1$, we have
$$\begin{aligned}
H(s)
&\ge C_2^{-1}\int_s^\infty \frac{dz}{f_0(z)}
=C_2^{-1}\int_s^\infty z^{-q} \frac{z^q}{f_0(z)}dz\\
&\ge C_2^{-1}\frac{s^q}{f_0(s)} \int_s^\infty z^{-q}dz=(C_2(q-1))^{-1}\frac{s}{f_0(s)} \ge c\frac{s}{f(s)}
\end{aligned}$$
with $c=C_1(C_2(q-1))^{-1}$. Since $y(t)>1$ for $t$ close enough to $T$, property \eqref{lowerBU} follows.
\end{proof}

{\bf Acknowledgements.} The author thanks J.~Garc\'\i a-Meli\'an for stimulating discussion during the preparation of this work.
{\cb He also thanks the referee for careful reading of the manuscript and useful suggestions.}

\end{document}